\definecolor{linkblue}{HTML}{003d73}
\definecolor{linkgreen}{HTML}{006161}
\definecolor{linkred}{HTML}{a11950}
\newcolumntype{L}[1]{>{\raggedright\let\newline\\\arraybackslash\hspace{0pt}}m{#1}}
\newcolumntype{C}[1]{>{\centering\let\newline\\\arraybackslash\hspace{0pt}}m{#1}}
\newcolumntype{R}[1]{>{\raggedleft\let\newline\\\arraybackslash\hspace{0pt}}m{#1}}
\let\mcnewpage=\newpage
\newcommand{\TrickSupertabularIntoMulticols}{%
\renewcommand\newpage{%
    \if@firstcolumn%
        \hrule width\linewidth height0pt%
            \columnbreak%
        \else%
          \mcnewpage%
        \fi%
}%
}
\newtheorem{theorem}{Theorem}
\newtheorem{lemma}[theorem]{Lemma}
\newtheorem{corollary}[theorem]{Corollary}
\newtheorem*{mainthm}{Theorem~\ref*{thm:main}}
\theoremstyle{definition}
\newtheorem{conjecture}[theorem]{Conjecture}
\newcommand{\R}{\mathbb{R}}
\newcommand{\stick}{\operatorname{stick}}
\newcommand{\bridge}{\operatorname{b}}
\newcommand{\superbridge}{\operatorname{sb}}
\title{New Computations of the Superbridge Index}
\author{Clayton Shonkwiler}
\affil{Department of Mathematics, Colorado State University, Fort Collins, CO}
\date{}
\begin{document}

\maketitle

\begin{abstract}
	The knots $8_1$, $8_2$, $8_3$, $8_5$, $8_6$, $8_7$, $8_8$, $8_{10}$, $8_{11}$, $8_{12}$, $8_{13}$, $8_{14}$, $8_{15}$, $9_7$, $9_{16}$, $9_{20}$, $9_{26}$, $9_{28}$, $9_{32}$, and $9_{33}$ all have superbridge index equal to 4. This follows from new upper bounds on superbridge index not coming from the stick number and increases the number of knots from the Rolfsen table for which superbridge index is known from 29 to 49. \Cref{sec:table} gives the current state of knowledge of superbridge index for prime knots through 10 crossings.
	
\end{abstract}

\section{Introduction} 
\label{sec:introduction}

For any tamely embedded closed curve $\gamma$ in $\R^3$, the \emph{superbridge number} of $\gamma$, denoted $\superbridge(\gamma)$, is the maximum number of local maxima of the projection of $\gamma$ to any line. Compare to the bridge number $\bridge(\gamma)$, which is the \emph{minimum} number of local maxima, and the total curvature, which is $2\pi$ times the \emph{average} number of local maxima. 

The superbridge number was introduced in Kuiper's 1987 paper~\cite{Kuiper:1987ki}, where he defined the \emph{superbridge index} to be the minimum of the superbridge number over all realizations of the knot; by construction this is a knot invariant, and the superbridge index of  a knot $K$ is denoted $\superbridge[K]$. While Kuiper computed it for all torus knots, the superbridge index is generally quite difficult to determine: for example, whereas the bridge index is known for all knots through 12 crossings~\cite{Blair:2020iz,knotinfo,Musick:2012uz}, the superbridge index is known for very few knots---as of September 1, 2020, the only knot invariant recorded by KnotInfo~\cite{knotinfo} which is known for fewer knots is the topological 4-dimensional crosscap number (a.k.a.\ nonorientable 4-ball genus)~\cite{Murakami:2000wb}.

The superbridge index often appears in conjunction with the \emph{stick number}---the minimum number of segments needed to construct a piecewise-linear realization of the knot---because of the bound ${\superbridge[K] \leq \frac{1}{2}\stick[K]}$ due to Jin~\cite{Jin:1997da}. Indeed, all determinations of superbridge indices of non-torus prime knots to date have come from showing this upper bound matches a lower bound on superbridge index.

However, this cannot be a winning strategy in general: as first observed by Furstenberg, Li, and Schneider~\cite{Furstenberg:1998fq}, the difference between the two sides of Jin's bound can be arbitrarily large.

\begin{figure}[t]
	\centering
		\subfloat[$8_{10}$]{\includegraphics[height=2in,valign=c]{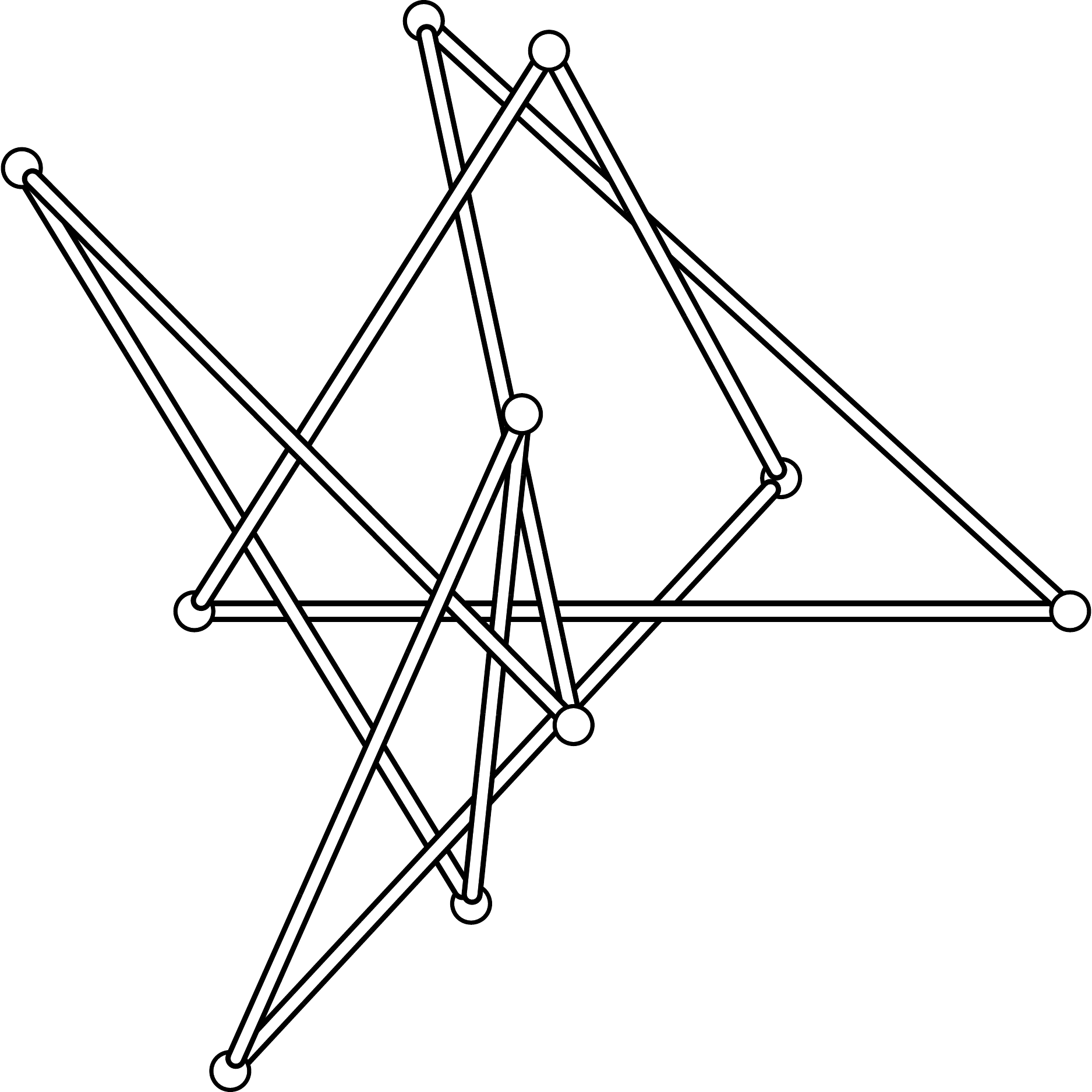}} \qquad \qquad
		\subfloat[$9_7$]{\includegraphics[height=2in,valign=c]{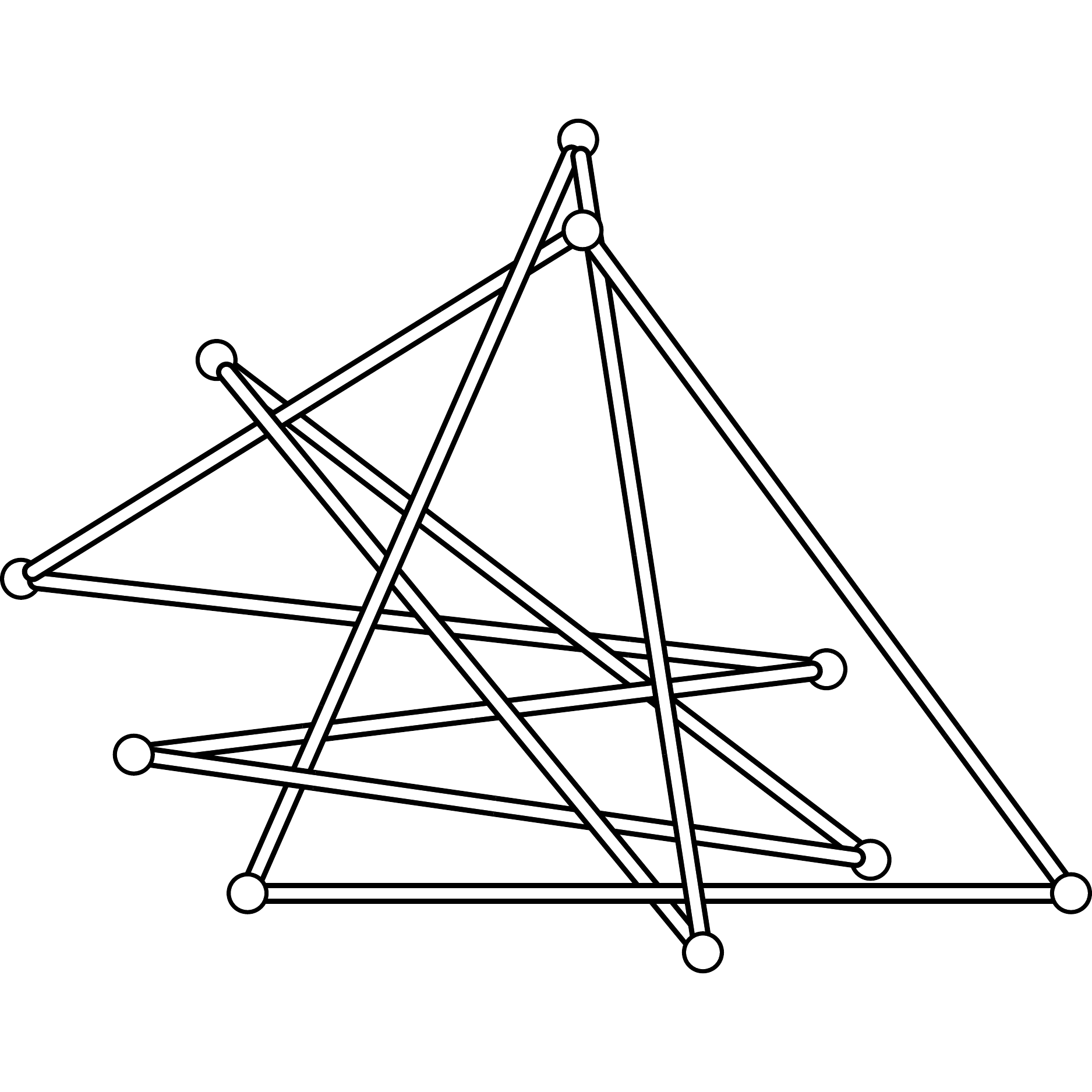}}
	\caption{Polygonal realizations of the $8_{10}$ and $9_7$ knots. These curves both have superbridge number equal to 4: each has 4 local maxima when projected to the $x$-axis, so the superbridge number is at least 4, and \Cref{cor:gordan bound} will imply that superbridge number is $\leq 4$. Each knot is shown in orthographic perspective, viewed from the direction of the positive $z$-axis relative to the vertex coordinates given in \Cref{sec:coords}.}
	\label{fig:examples}
\end{figure}

The main contribution of the present paper is a new approach to giving upper bounds on superbridge index. In some sense the approach is obvious: find a realization of a knot $K$ whose superbridge number is no bigger than some $m$, and conclude that $\superbridge[K]\leq m$. The challenge comes in showing that, for a given realization of $K$, there is no direction so that projecting to a line in that direction has more than $m$ local maxima. The strategy in this paper is to use \emph{polygonal} realizations and to observe that the existence of a direction with $m+1$ local maxima is equivalent to the feasibility of a system of linear inequalities. In other words, ruling out the existence of such directions is equivalent to showing that the system has no solutions, which will be done using Gordan's Theorem~\cite{Gordan:1873dz}, a classical linear programming tool for certifying the non-existence of solutions.

See \Cref{fig:examples} for two such polygonal realizations of the knots $8_{10}$ and $9_7$. Both will be shown to have superbridge number $\leq 4$, and hence it will follow that $\superbridge[8_{10}],\superbridge[9_7]\leq 4$. More generally, the main theorem of this paper is:

\begin{theorem}\label{thm:main}
	The knots $8_{1-15}$, $9_7$, $9_{16}$, $9_{20}$, $9_{26}$, $9_{28}$, $9_{32}$, and $9_{33}$ have superbridge index $\leq 4$, and $10_{76}$, $13n_{226}$, $13n_{328}$, $13n_{342}$, $13n_{343}$, $13n_{350}$, $13n_{512}$, $13n_{973}$, $13n_{2641}$, $13n_{5018}$, and $14n_{1753}$ have superbridge index $\leq 5$.
	
	This implies that the knots $8_1$, $8_2$, $8_3$, $8_5$, $8_6$, $8_7$, $8_8$, $8_{10}$, $8_{11}$, $8_{12}$, $8_{13}$, $8_{14}$, $8_{15}$, $9_7$, $9_{16}$, $9_{20}$, $9_{26}$, $9_{28}$, $9_{32}$, and $9_{33}$ have superbridge index equal to 4, and that $13n_{226}$, $13n_{328}$, $13n_{342}$, $13n_{343}$, $13n_{350}$, $13n_{512}$, $13n_{973}$, $13n_{2641}$, $13n_{5018}$, and $14n_{1753}$ have superbridge index equal to 5.
\end{theorem}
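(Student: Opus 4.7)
The plan is to split the theorem into its upper-bound half and its equality half and tackle them in that order. For the upper bounds, I would for each of the listed knots exhibit one explicit polygonal realization (with vertex coordinates to be tabulated in \Cref{sec:coords}) and apply the Gordan's Theorem-based criterion of \Cref{cor:gordan bound} to certify that the realization has superbridge number at most the claimed value. Concretely, a direction $v$ producing $m+1$ local maxima of the height function on the polygon is recorded by a choice of $m+1$ vertices together with the sign constraints that both adjacent edges go ``down'' from each such vertex in direction $v$; collecting these over all admissible vertex-choice patterns gives finitely many systems of strict linear inequalities in $v$, and the realization has superbridge number $\le m$ exactly when every such system is infeasible. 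Gordan's Theorem lets this infeasibility be certified by exhibiting, for each system, a nonnegative combination of its constraint vectors that sums to zero—a finite, mechanically checkable task.

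Next, to upgrade the upper bounds to equalities I would import matching lower bounds from the literature. For the 8- and 9-crossing knots in the list, tabulated lower bounds on superbridge index (via bridge index and the refinements recorded by KnotInfo) are already at least $4$, and similarly the 13- and 14-crossing entries carry a lower bound of at least $5$. Combining these with the new upper bounds then yields the stated equalities, and the improvement over the previously known count of $29$ Rolfsen knots follows by bookkeeping against the existing tables.

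The genuinely hard step is the first one: producing polygonal realizations whose superbridge number is as small as the bridge number (or one larger). The bridge bound is cheap, but demanding that every projection direction exhibit at most $m$ local maxima is a strongly nonconvex global constraint on the vertex set. My plan here is a two-stage search: first sample polygons realizing the correct knot type (checked via HOMFLY on the underlying diagram) from an ensemble of random equilateral closed polygons with few edges, and then locally refine promising candidates by a smoothed descent or simulated annealing against an objective that penalizes the maximum-over-directions number of local maxima, while keeping an isotopy tether to preserve knot type. Once a good candidate is found, applying \Cref{cor:gordan bound} to produce the Gordan certificates is routine and can be delegated to a linear-programming solver; the certificates themselves can then be archived alongside the coordinates so that the verification is independently reproducible.
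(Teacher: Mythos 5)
Your upper-bound half is essentially the paper's argument: pick a polygonal realization with exactly $2(m+1)$ edges, observe that a direction yielding $m+1$ local maxima forces the cyclic sign sequence $\vec{v}\cdot\vec{e}_1,\dots,\vec{v}\cdot\vec{e}_n$ to alternate, and certify infeasibility of the single resulting system $\vec{v}^T E>0$ with one Gordan vector per knot. Note that your ``finitely many systems over all admissible vertex-choice patterns'' collapses to a single system precisely because the realizations are chosen with $n=2(m+1)$ edges; for longer polygons the combinatorial explosion you describe would be real, and the paper deliberately avoids it.

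The genuine gap is in the lower bounds. For the 8- and 9-crossing knots, the bridge-index bound $\bridge[K]<\superbridge[K]$ is useless: most of these knots are 2-bridge, so it gives only $\superbridge[K]\geq 3$. The bound $\geq 4$ comes from Jeon and Jin's quadrisecant classification of possible 3-superbridge knots (\Cref{3 superbridge}), not from anything ``tabulated via bridge index,'' and it fails for $8_4$ and $8_9$ --- which is exactly why those two appear in the $\leq 4$ list but not in the equality list, a distinction your proposal does not make (and $8_7$ needs the separate alternating-quadrisecant argument of \Cref{lem:8_7} to be removed from the exceptional list). More seriously, the lower bound of $5$ for the 13- and 14-crossing knots is not available in any table: bridge index is only known through 12 crossings, and there is no analogue of Jeon--Jin giving $\superbridge\geq 5$. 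The paper must \emph{prove} $\bridge[K]\geq 4$ for each of these knots by exhibiting an explicit surjective homomorphism $\pi_1(S^3\backslash K)\twoheadrightarrow S_5$ sending meridians to transpositions and invoking \Cref{lem:homomorphism}; only then does Kuiper's strict inequality yield $\superbridge[K]\geq 5$. Your proposal treats this entire half of the theorem as a citation when it is in fact new work requiring its own certificates.
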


The particular polygonal realizations providing these bounds were found by generating large ensembles of random equilateral polygons in tight confinement using the algorithm described in the paper~\cite{TomClay} and implemented in the open-source {\tt stick-knot-gen} project~\cite{stick-knot-gen}. 

\Cref{sec:superbridge_index} below gives some background on superbridge index, including a survey of known bounds. The connection to Gordan's theorem is explained in \Cref{sec:a_new_approach}, which is where \Cref{thm:main} is proved, and \Cref{sec:conclusion} provides some discussion and open questions. \Cref{sec:table} consists of a table of values (or possible values, when the exact value is not known) of the superbridge index for all prime knots through 10 crossings, \Cref{sec:exact values} gives all prime knots through 16 crossings for which the exact value of superbridge index is known, \Cref{sec:coords} gives the coordinates for each of the polygonal knots discussed in this paper (which can also be downloaded from the {\tt stick-knot-gen} project~\cite{stick-knot-gen}), and \Cref{sec:snappy} gives diagrams for the 13- and 14-crossing knots and homomorphisms from each knot group to the symmetric group $S_5$ which will be used in the proof of \Cref{thm:main}. 

\section{Background on the Superbridge Index} 
\label{sec:superbridge_index}

The only infinite class of prime knots for which superbridge index is known is the class of torus knots:

\begin{theorem}[Kuiper~\cite{Kuiper:1987ki}]\label{torus}
	For relatively prime $2\leq p < q$, the superbridge index of the $(p,q)$-torus knot is
	\[
		\superbridge[T_{p,q}] = \min(2p,q).
	\]
\end{theorem}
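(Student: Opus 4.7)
The plan is to establish matching upper and lower bounds.

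For the upper bound, I would exhibit an explicit smooth realization of $T_{p,q}$ on a thin torus of revolution and verify that no direction receives more than $\min(2p,q)$ local maxima. The natural parametrization
\[
\gamma_r(t) = \bigl((1+r\cos qt)\cos pt,\; (1+r\cos qt)\sin pt,\; r\sin qt\bigr),\quad t\in[0,2\pi],
\]
has $q$ maxima in its vertical projection (from the $r\sin qt$ coordinate) and at most $2p$ in any horizontal projection (from $\cos(pt-\theta)$ modulated by $1+r\cos qt$, whose $p$ carrier peaks can at worst each split into two). The technical work is to rule out intermediate directions producing more: I would argue via a Morse-theoretic continuity argument, tracking how critical points of $\gamma_r\cdot\mathbf{u}$ appear and disappear in max/min pairs at fold bifurcations as $\mathbf{u}$ traces $S^2$, and using $\gcd(p,q)=1$ together with a small enough choice of $r$ to ensure the running count never exceeds $\min(2p,q)$.

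For the lower bound, I would show that every realization $\gamma$ of $T_{p,q}$ has some projection with at least $\min(2p,q)$ local maxima. The trivial bound $\superbridge(\gamma)\geq\bridge(\gamma)=p$ (from Schubert's computation of the bridge number of torus knots) falls short by a factor of up to $2$, so one must exploit more of the braid structure. A promising route is the Crofton-type identity that the average of the local-maximum count of $\gamma\cdot\mathbf{u}$ over $\mathbf{u}\in S^2$ equals $\tau(\gamma)/(2\pi)$, where $\tau$ is the total curvature. Since the maximum is at least the average, this reduces the lower bound to the knot-invariant inequality $\tau(T_{p,q})\geq 2\pi\min(2p,q)$, a sharpening of F\'ary--Milnor that should follow from the closed-braid presentation of $T_{p,q}$ on $\min(p,q)$ strands and a count of transverse intersections of strands with generic meridional planes of the braid axis.

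The main obstacle is the lower bound: improving $\superbridge\geq p$ to $\min(2p,q)$ needs genuinely torus-knot-specific geometric input, and establishing the total-curvature inequality $\tau(T_{p,q})\geq 2\pi\min(2p,q)$ as an invariant (not just for the standard realization) is the technical heart of the argument. The upper bound is more routine in spirit but still requires a careful global Morse analysis over $S^2$ rather than a pointwise computation in two reference directions.
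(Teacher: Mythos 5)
The paper does not actually prove this theorem---it is quoted as background and attributed to Kuiper~\cite{Kuiper:1987ki}---so the comparison here is purely against the correctness of your proposed argument. Your lower bound strategy contains a fatal gap. The Crofton-type identity you invoke is correct: the average over $\mathbf{u}\in S^2$ of the number of local maxima of $\gamma\cdot\mathbf{u}$ equals $\tau(\gamma)/(2\pi)$, and hence $\superbridge(\gamma)\geq\tau(\gamma)/(2\pi)$. But the inequality you then need, $\tau(\gamma)\geq 2\pi\min(2p,q)$ for \emph{every} realization $\gamma$ of $T_{p,q}$, is false. Milnor's theorem on total curvature states that the infimum of $\tau(\gamma)$ over all realizations of a knot $K$ equals $2\pi\,\bridge[K]$, and for $T_{p,q}$ with $2\leq p<q$ the bridge index is $p$ (Schubert), whereas $\min(2p,q)\geq p+1$. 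Concretely, for the trefoil $T_{2,3}$ there are realizations with total curvature arbitrarily close to $4\pi$, so the average number of local maxima can be made arbitrarily close to $2$, yet $\superbridge[3_1]=3$. This shows that no averaging argument can ever improve the invariant lower bound beyond $\bridge[K]$; the maximum over directions genuinely exceeds the average for curvature-minimizing realizations, and Kuiper's actual lower-bound argument must (and does) use something other than total curvature. This is not a repairable technical step but a wrong route: the ``sharpened F\'ary--Milnor inequality'' you propose as the technical heart of the proof cannot hold as a knot invariant.

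Your upper bound is more reasonable in outline---analyzing the standard realization on a thin torus of revolution is essentially the right idea---but as written it is only a sketch: the claim that each of the $p$ carrier peaks ``can at worst split into two'' for intermediate directions is precisely the assertion that needs proof, and the proposed Morse-theoretic continuity argument over $S^2$ is named rather than carried out. Since the paper offers no proof to compare against, the verdict rests on the proposal alone: the upper bound is incomplete but salvageable, while the lower bound as proposed cannot succeed.
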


Progress in determining the superbridge index of knots has been slow. Aside from torus knots, the superbridge index was known for only 41 prime knots prior to the present work. In particular, it was known for only 29 of the 249 nontrivial knots with 10 or fewer crossings. In virtually all cases, the strategy for computing superbridge index is to show that some upper bound matches a lower bound.

The most useful lower bounds on superbridge index are:

\begin{theorem}[Kuiper~\cite{Kuiper:1987ki}]\label{bridge bound}
	For any nontrivial knot $K$, the bridge index $\bridge[K] < \superbridge[K]$.
\end{theorem}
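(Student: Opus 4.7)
The plan is to separate the easy and hard halves of the inequality. For any realization $\gamma$ of $K$ and any generic direction $v \in S^2$, let $\mu_v(\gamma)$ denote the number of local maxima of the height function $h_v|_\gamma$; then $\bridge(\gamma) = \min_v \mu_v(\gamma)$ and $\superbridge(\gamma) = \max_v \mu_v(\gamma)$ are the minimum and maximum of the \emph{same} integer-valued function on $S^2$, so $\bridge(\gamma) \leq \superbridge(\gamma)$ holds automatically, and passing to the infimum over all realizations $\gamma$ of $K$ yields $\bridge[K] \leq \superbridge[K]$. The real content is therefore ruling out equality when $K$ is nontrivial.

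I would argue this by contradiction via total curvature. Suppose $\bridge[K] = \superbridge[K] = n$ for a nontrivial $K$, and pick a smooth realization $\gamma$ of $K$ with $\superbridge(\gamma) = n$. The squeeze $n = \bridge[K] \leq \bridge(\gamma) \leq \superbridge(\gamma) = n$ forces $\bridge(\gamma) = \superbridge(\gamma) = n$, so $\mu_v(\gamma) = n$ for \emph{every} generic $v \in S^2$. Plugging this into the Cauchy--Crofton identity
\[
\kappa_{\mathrm{tot}}(\gamma) \;=\; \tfrac{1}{4}\int_{S^2} \#\{\text{critical points of } h_v|_\gamma\}\, dA(v),
\]
which computes the length of the tangent indicatrix of $\gamma$ on $S^2$, evaluates the total curvature to $\kappa_{\mathrm{tot}}(\gamma) = 2\pi n = 2\pi\,\bridge[K]$, saturating Milnor's classical lower bound $\kappa_{\mathrm{tot}} \geq 2\pi\,\bridge[K]$ for knots.

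The contradiction then comes from the fact that, for any nontrivial $K$, Milnor's bound is strict for every smooth realization. In the case $n = 2$ this is exactly the sharp form of the F\'ary--Milnor theorem. For $n \geq 3$ it follows from the same circle of ideas: a smooth curve attaining $\kappa_{\mathrm{tot}} = 2\pi n$ must have tangent indicatrix lying on a (possibly multiply covered) great circle of $S^2$, forcing $\gamma$ itself to be planar, hence unknotted. The main obstacle is precisely this last rigidity step in the $n \geq 3$ case---the $n = 2$ bound is off-the-shelf, but equality analysis for Milnor's inequality at higher $n$ has to be pinned down carefully. An alternative plan, closer in flavor to Kuiper's original approach, would bypass total curvature altogether and work directly with the constancy $\mu_v \equiv n$: in a direction realizing bridge position the curve splits into $n$ monotone ascending and $n$ monotone descending arcs, the constancy condition shows each arc remains monotone as $v$ varies over a neighborhood of $S^2$, and a perturbation/homotopy argument then trivializes the way these $2n$ arcs can link, showing $\gamma$ is isotopic to a standard $n$-bridge unknot.
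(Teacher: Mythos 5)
The paper does not actually prove this statement---it is quoted from Kuiper's 1987 paper as a known result---so your argument has to be judged on its own terms. The first half is fine: evaluating at a superbridge-minimizing realization gives $\bridge[K]\le\superbridge[K]$, and your squeeze correctly shows that equality would force some realization $\gamma$ to have $\mu_v(\gamma)\equiv n$ for every generic $v$, hence total curvature exactly $2\pi n = 2\pi\,\bridge[K]$.

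The gap is the rigidity step, which you yourself flag as ``the main obstacle,'' and which as stated is not merely unproven but false. Total curvature is the \emph{length} of the tangent indicatrix, and a closed spherical curve of length $2\pi n$ is in no way constrained to lie on a great circle once $n\ge 2$: the hemisphere argument that yields this rigidity in Fenchel's theorem (a closed spherical curve not contained in any open hemisphere has length $\ge 2\pi$, with equality only for great circles) has no analogue for longer indicatrices, so ``$\kappa_{\mathrm{tot}}=2\pi n$ forces planarity'' does not follow. Worse, the statement you are really invoking---that for nontrivial $K$ Milnor's infimum $2\pi\,\bridge[K]$ is not attained by any realization---is essentially \emph{equivalent} to the theorem being proved: by the very averaging identity you quote, attainment happens precisely when $\mu_v$ is a.e.\ constant equal to $\bridge[K]$, i.e., precisely when $\superbridge[K]=\bridge[K]$. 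So the total-curvature route restates the problem rather than solving it, except in the one genuinely off-the-shelf case $\bridge[K]=2$, where strict F\'ary--Milnor applies. The closing alternative---that constancy of $\mu_v$ lets a ``perturbation/homotopy argument'' trivialize how the $2n$ monotone arcs link---is indeed the flavor of Kuiper's actual proof, but as written it is a one-sentence placeholder carrying all of the difficulty, not an argument.
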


\begin{theorem}[Jeon--Jin~\cite{Jeon:2002gm}]\label{3 superbridge}
	Every knot except $3_1$ and $4_1$ and possibly $5_2$, $6_1$, $6_2$, $6_3$, $7_2$, $7_3$, $7_4$, $8_4$, and $8_9$ has superbridge index $\geq 4$.
\end{theorem}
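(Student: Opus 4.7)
The plan is to use the bridge bound to reduce the problem to the 2-bridge case and then to argue that $\superbridge[K] = 3$ forces the complexity of $K$ to be small enough that only finitely many candidates remain.

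First, \Cref{bridge bound} gives $\bridge[K] < \superbridge[K]$ for every nontrivial $K$, so any knot with $\bridge[K] \geq 3$ automatically satisfies $\superbridge[K] \geq 4$. The only bridge-1 knot is the unknot (which has $\superbridge = 1$), so any nontrivial knot with $\superbridge[K] = 3$ must be 2-bridge. The theorem therefore reduces to classifying all 2-bridge knots whose superbridge index equals 3.

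Now suppose $K$ is 2-bridge and $\gamma$ is a smooth realization attaining $\superbridge(\gamma) = 3$. Choose a generic direction $v$ for which the height function $\langle \gamma, v \rangle$ is Morse with exactly three local maxima and three local minima; such a $v$ exists since the superbridge is realized on an open set of directions. Then $\gamma$ decomposes into six arcs, each strictly monotone with respect to $v$, chained cyclically at the six critical points. My plan would be to encode this monotone decomposition together with the overcrossing/undercrossing data of the projection transverse to $v$ as a finite combinatorial object (a Gauss-code-like decoration of the cyclic chain of six monotone arcs), and then to exploit the \emph{global} condition that no perturbation of $v$ produces a fourth local maximum in order to bound the crossing count of such a decorated chain.

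The main obstacle is the last step: converting the universally-quantified superbridge condition into an actual combinatorial bound on the diagram. Intuitively, any additional crossing that requires the curve to double back on itself should create a nearby direction with an extra local maximum, but making this precise appears to require a careful case analysis of how six monotone arcs can mutually cross while remaining in ``$\leq 3$-max position'' in every direction. Once such a bound on crossing number (or arc index) is secured, the rest is a finite enumeration: list the 2-bridge knots below the complexity bound, eliminate as many as possible using stronger lower bounds or explicit constructions attaining $\superbridge = 3$, and verify that the surviving list is precisely $\{3_1,4_1,5_2,6_1,6_2,6_3,7_2,7_3,7_4,8_4,8_9\}$, with $3_1$ and $4_1$ having $\superbridge = 3$ confirmed directly (the former from \Cref{torus}) and the other nine remaining as undetermined candidates.
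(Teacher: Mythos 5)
Your opening reduction is sound and matches the real starting point: by \Cref{bridge bound} any knot with $\superbridge[K]=3$ must be $2$-bridge, so the problem is to classify $2$-bridge knots admitting a realization with superbridge number $3$. But the heart of the proof is exactly the step you flag as ``the main obstacle'' and then leave unresolved: you never actually convert the hypothesis $\superbridge(\gamma)=3$ into a finite combinatorial bound, so the proposal is an outline with its central argument missing. Moreover, the device you reach for---a generic direction realizing three maxima, giving six monotone arcs---is not what makes the argument work, and it is unclear it can be made to work: the decomposition into six monotone arcs by itself places no bound on how the arcs cross, and the condition that \emph{every} direction has at most three maxima is very hard to exploit from a single generic projection. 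Jeon and Jin instead project to the orthogonal complement of a \emph{quadrisecant} of $\gamma$ (which exists by Kuperberg, building on Pannwitz and Morton--Mond). The quadrisecant collapses to a single point of multiplicity at least four in the projected planar curve, and it is this forced degenerate configuration, combined with the superbridge-$3$ hypothesis, that constrains the planar curve enough to enumerate the possible knot types. Your approach has no analogue of this anchor.

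There is also a second gap specific to the statement as it appears in this paper: the list here omits $8_7$, which \emph{was} in Jeon and Jin's original list of candidates. Removing it requires the separate argument of \Cref{lem:8_7}, which upgrades the quadrisecant to an \emph{alternating} one using Denne's theorem and checks that every $8_7$ configuration in Jeon and Jin's catalog is associated with a non-alternating quadrisecant. Your proposed enumeration, even if completed, would at best reproduce the original list including $8_7$; it provides no mechanism for this refinement.
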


This is slightly different than the statement in Jeon and Jin's paper, which included $8_7$ in the list of possible 3-superbridge knots. However, $8_7$ cannot have superbridge index equal to 3:

\begin{lemma}\label{lem:8_7}
	$\superbridge[8_7] \geq 4$.
\end{lemma}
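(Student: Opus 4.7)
\emph{Proof strategy.} Since $8_7$ is a $2$-bridge knot, Kuiper's theorem (\Cref{bridge bound}) gives $\superbridge[8_7] > \bridge[8_7] = 2$, so $\superbridge[8_7] \geq 3$ automatically. My plan is to rule out equality by returning to Jeon and Jin's argument underlying \Cref{3 superbridge}. Their proof of $\superbridge[K] \geq 4$ for most knots proceeds by showing that a $2$-bridge knot with superbridge index $3$ must admit a $2$-bridge presentation whose Conway/continued-fraction expansion $C(a_1,\dots,a_k)$ satisfies a restrictive combinatorial condition; their ``possibly 3-superbridge'' list is exactly the set of small-crossing knots they were unable to definitively exclude via this condition. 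The first step is therefore to write down the $2$-bridge fraction for $8_7$ (namely $21/8$, with the corresponding Conway notation), and to re-apply Jeon and Jin's combinatorial criterion carefully. I expect that, contrary to their original enumeration, $8_7$'s continued-fraction data in fact fails the condition, giving $\superbridge[8_7] \ne 3$ directly.

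If this close reading of \Cref{3 superbridge} does not suffice, a back-up plan is to argue geometrically. A hypothetical superbridge-$3$ polygonal realization of $8_7$ would, by the same sort of directional-projection analysis used elsewhere in the paper, satisfy very strong structural constraints: in every direction the curve has at most $3$ local maxima, which severely restricts the combinatorial type of its projection and, in particular, the achievable stick numbers and crossing patterns. One could try to enumerate all knot types realizable by such ``tri-maximal'' polygons of small stick number and verify that $8_7$ does not appear.

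The main obstacle is locating the precise step in Jeon and Jin's combinatorial classification that should have excluded $8_7$ (assuming it is a genuine oversight rather than a gap requiring new ideas), and verifying the rejection by a direct computation on the continued fraction expansion. If this specific approach fails, the fallback enumeration of small-stick superbridge-$3$ polygons is entirely mechanical but considerably more technical, and would almost certainly require the same Gordan-theorem machinery developed in \Cref{sec:a_new_approach} applied in reverse---to certify that certain candidate polygons genuinely realize superbridge $3$---which is the kind of calculation this paper is set up to perform.
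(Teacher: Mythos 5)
There is a genuine gap here, and it comes in two parts. First, you have misidentified the structure of Jeon and Jin's argument: it is not a combinatorial criterion on the continued-fraction expansion of a $2$-bridge presentation. Their proof assumes a parametrization $\gamma$ with $\superbridge(\gamma)=3$, takes a \emph{quadrisecant} of $\gamma$ (a line meeting the curve in at least four components, which exists by Kuperberg's theorem), projects to its orthogonal complement, and uses the superbridge hypothesis to constrain the resulting planar curve and hence the possible knot types. The $8_7$ entries in their tables are not an oversight that a ``close reading'' will remove: under the hypotheses Jeon and Jin actually use, those entries are legitimately present, so your first plan would simply reproduce their list with $8_7$ still on it. The missing idea is an external input that strengthens the hypothesis: Denne's theorem that every nontrivial knot admits an \emph{alternating} quadrisecant (one whose intersection points occur in the order $acbd$ along the curve rather than $abcd$ or $abdc$). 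Rerunning Jeon and Jin's argument with the quadrisecant required to be alternating, one checks that every $8_7$ configuration in their catalog (one in their Table~T, two in Table~V) is attached to a non-alternating quadrisecant, which is the contradiction that eliminates $8_7$. Without Denne's theorem there is no lever to pull.

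Your fallback plan does not work either. A knot with superbridge index $3$ need not be realized by a polygon of small stick number --- the infimum is over all tame realizations, including smooth ones and polygons with arbitrarily many edges --- so there is no finite enumeration of ``tri-maximal polygons of small stick number'' that would be exhaustive. Moreover, the Gordan-theorem machinery of \Cref{sec:a_new_approach} only certifies \emph{upper} bounds for a \emph{specific} given polygon; it cannot certify that no realization of a given knot type anywhere in configuration space achieves superbridge number $3$, which is what a lower bound requires. This asymmetry between upper and lower bounds is exactly the difficulty the paper discusses at the start of \Cref{sec:a_new_approach}.
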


\begin{proof}
	This result was first proved by Adams et al. in an early version of their paper~\cite{Adams:2020vm}; their proof, which is essentially the one given below, was also sketched in a talk given by Gyo Taek Jin in February, 2020~\cite{JinTalk}.
	
	Jeon and Jin's characterization of possible 3-superbridge knots begins by assuming that a given parametrization $\gamma: S^1 \to \R^3$ of a nontrivial knot has superbridge number 3, projecting to the orthogonal complement of a \emph{quadrisecant}---a line whose intersection with the image of $\gamma$ consists of at least 4 distinct components---and using the assumption $\superbridge(\gamma) = 3$ to constrain the resulting planar curve, and hence the possible knot types of~$\gamma$.
	
	\begin{figure}[t]
		\centering
			\subfloat[Simple]{\includegraphics[height=1.2in]{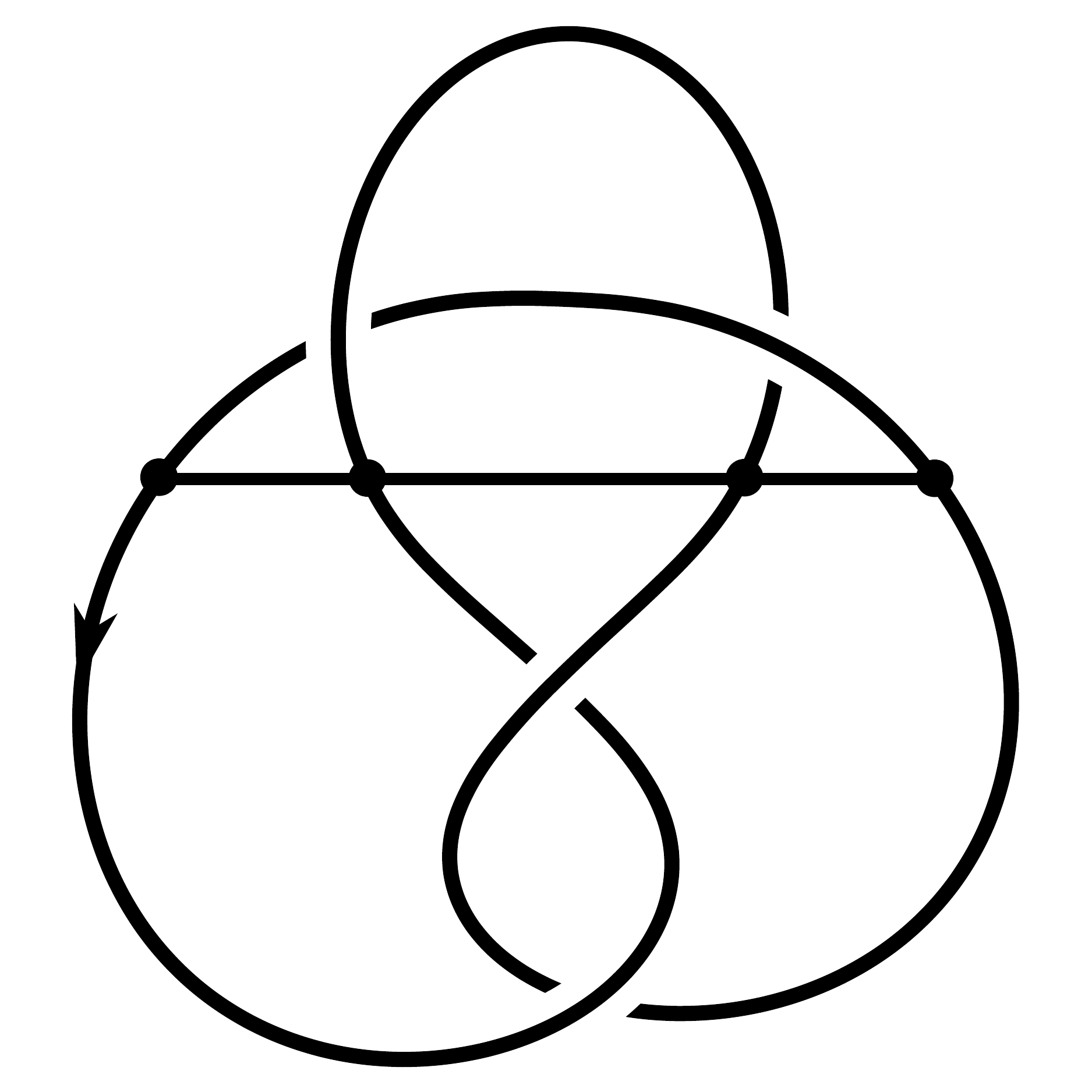}} \qquad \qquad
			\subfloat[Flipped]{\includegraphics[height=1.2in]{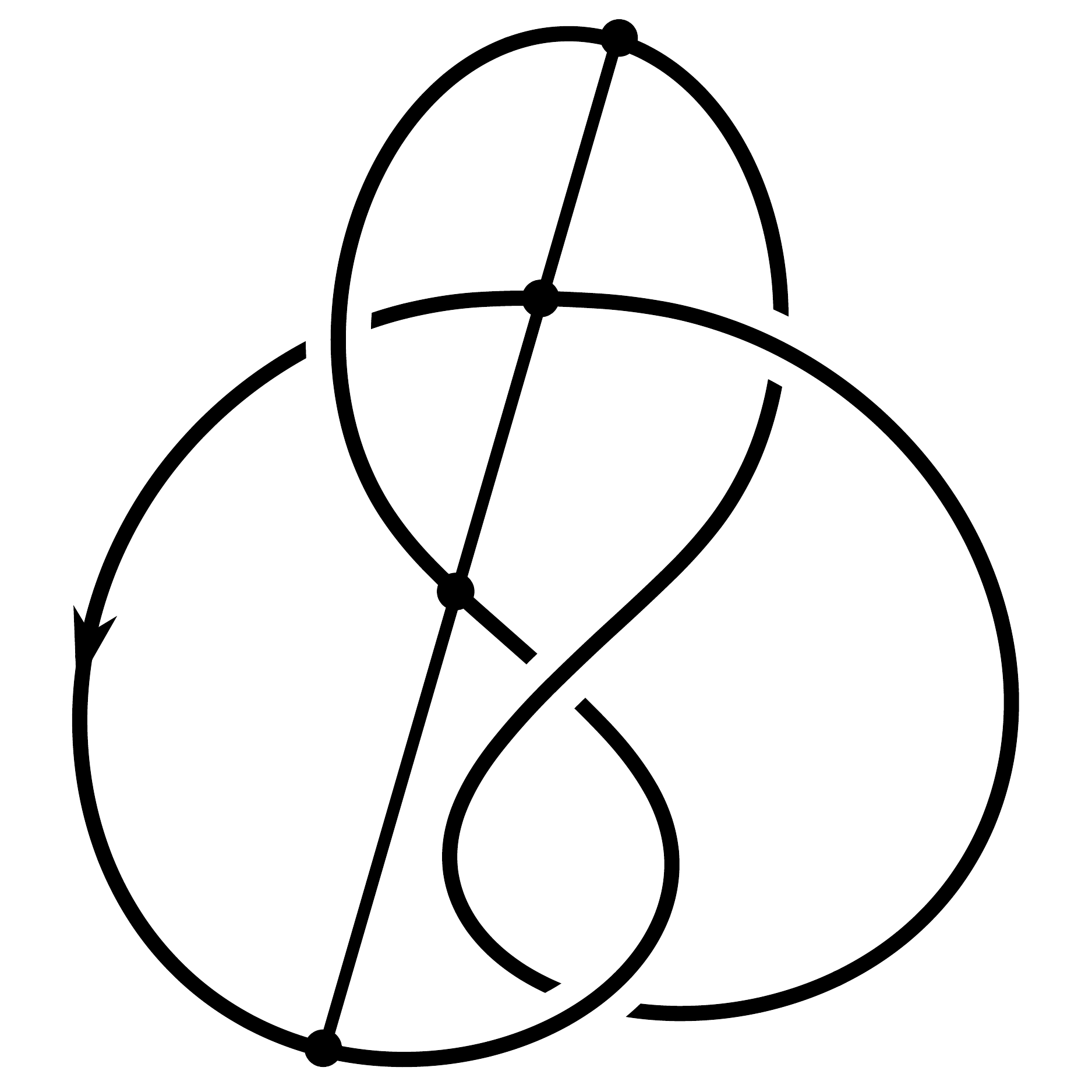}} \qquad \qquad
			\subfloat[Alternating]{\includegraphics[height=1.2in]{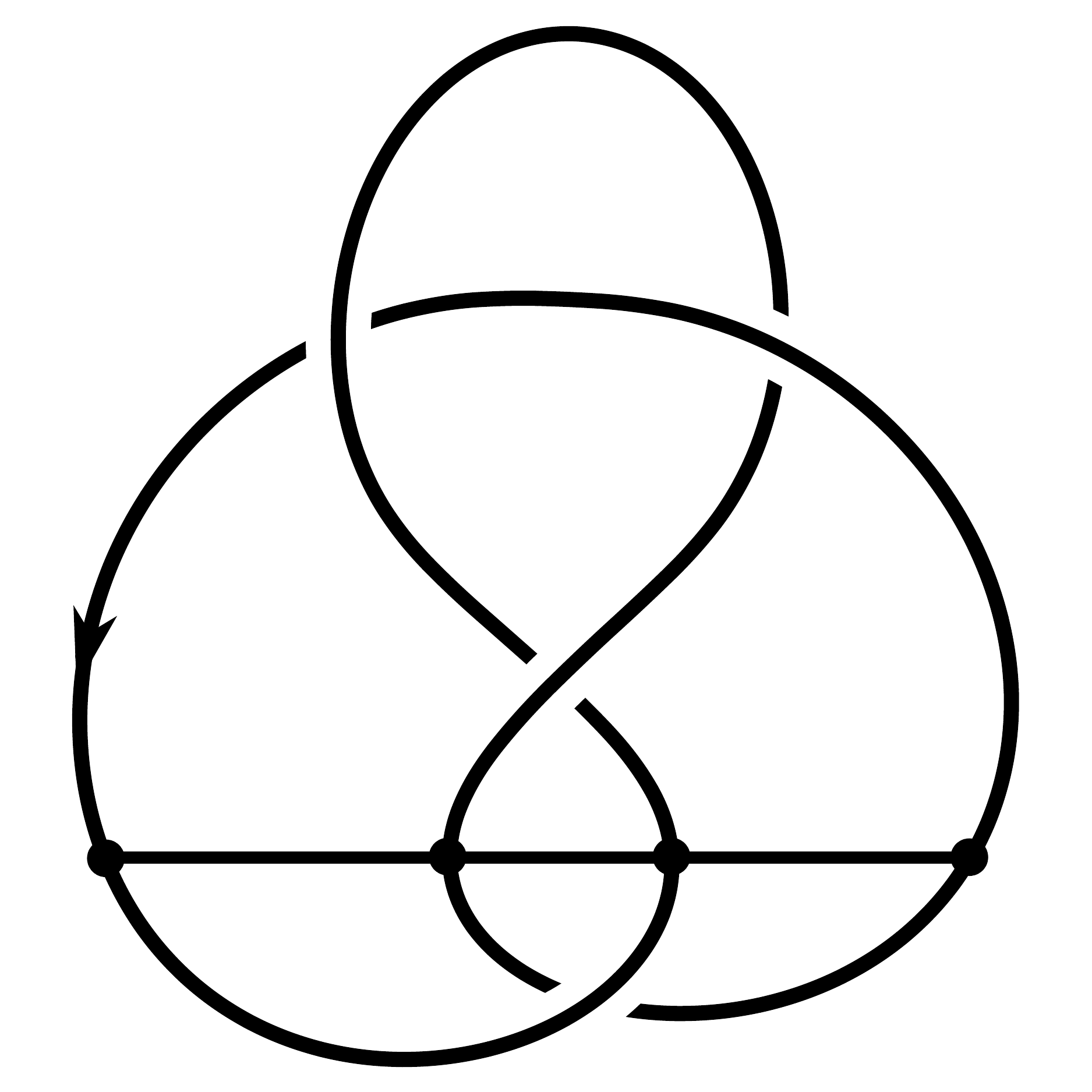}}
			\put(-85,14){\small $a$}
			\put(-57,21){\small $b$}
			\put(-32,21){\small $c$}
			\put(-8,14){\small $d$}
			\put(-163,86){\small $a$}
			\put(-175,65){\small $b$}
			\put(-183,36){\small $c$}
			\put(-190,-5){\small $d$}
			\put(-335,49){\small $a$}
			\put(-316,41){\small $b$}
			\put(-282,41){\small $c$}
			\put(-263,49){\small $d$}
		\caption{The three types of quadrisecant illustrated on the figure eight knot from KnotPlot~\cite{knotplot}. Traversing the curve in the direction of the indicated orientation starting at point $a$ produces the word $acbd$ for the alternating quadrisecant, as opposed to $abcd$ for the simple quadrisecant and $abdc$ for the flipped quadrisecant.}
		\label{fig:quadrisecants}
	\end{figure}
	
	The fact that $\gamma$ has a quadrisecant was proved by Kuperberg~\cite{Kuperberg:1994et}, building on work of Pannwitz~\cite{Pannwitz:1933ks} and Morton and Mond~\cite{Morton:1982dn}, but Denne has proved~\cite{Denne:2004wq,Denne:2005wq} that every nontrivial knot has an \emph{alternating} quadrisecant; see \Cref{fig:quadrisecants}. Therefore, Jeon and Jin's argument can be modified to require the quadrisecant of $\gamma$ be alternating. However, all the $8_7$ knots in their catalog (one in their Table~T and two in Table~V) are associated with a non-alternating quadrisecant, contradicting this assumption and proving that $8_7$ cannot have superbridge index equal to~3.
\end{proof}

By far the most useful upper bound on superbridge index is given in terms of stick number.

\begin{theorem}[Jin~\cite{Jin:1997da}]\label{stick bound}
	For any knot $K$, $\superbridge[K] \leq \frac{1}{2}\stick[K]$.
\end{theorem}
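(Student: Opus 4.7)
The plan is to show that any polygonal realization $P$ of $K$ with $n$ edges satisfies $\superbridge(P) \leq n/2$; applying this to a minimal stick realization with $n = \stick[K]$ then yields the bound. Fix $P$ and a direction $v \in S^2$, and let $M$ denote the number of local maxima of the projected height function $h_v \colon P \to \R$ obtained by projection onto the line through $v$. The task reduces to proving $M \leq n/2$ for every $v$.

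First I would dispatch the generic case in which $v$ is perpendicular to no edge of $P$. Then on every edge $h_v$ is strictly monotonic, so each edge carries a well-defined label $+$ (increasing) or $-$ (decreasing). A vertex of $P$ is a local maximum precisely when its incoming edge has label $+$ and its outgoing edge has label $-$, and it is a local minimum in the opposite case; vertices where the labels agree are not critical points of $h_v$. Traversing the closed polygon $P$ once, the cyclic sequence of $\pm$ labels has an equal number of $+\to-$ sign changes and $-\to+$ sign changes, so the number of local maxima equals the number of local minima. Since these $2M$ distinct sign-change vertices are drawn from the $n$ vertices of $P$, we get $2M \leq n$ and hence $M \leq n/2$.

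Next I would handle the non-generic directions by a perturbation argument. Only finitely many great circles in $S^2$ are exceptional---one for each edge of $P$---so any direction $v$ is a limit of generic directions $v'$. As $v' \to v$, edges of $P$ that become perpendicular to $v$ collapse to plateaus in $h_v$, and the bookkeeping shows that local maxima for the generic $v'$ can only merge into a single plateau-max, be absorbed by a newly-created plateau, or disappear in pairs with local minima; in particular, the count at $v$ is bounded above by the count at nearby generic $v'$. Combining with the generic bound gives $M(v) \leq n/2$ in all cases, so $\superbridge(P) \leq n/2$ and therefore $\superbridge[K] \leq \superbridge(P) \leq \frac{1}{2}\stick[K]$.

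I expect the main obstacle to be the non-generic case: one must settle on a convention for counting local maxima along a plateau and then verify that passing to such a limit never creates new maxima. The generic case, by contrast, is a one-line parity observation and is the conceptual heart of the result.
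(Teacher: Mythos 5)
Your argument is correct and is essentially the paper's own justification, which is the one-line observation that the projection of a polygonal knot to a line has at most as many critical points as vertices, so that the alternation of maxima and minima around the closed curve forces at most $n/2$ local maxima. Your additional care with the non-generic directions (edges perpendicular to $v$) is a reasonable elaboration of a point the paper leaves implicit, and your perturbation bookkeeping is sound.
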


This follows because the projection of a polygonal knot to a line cannot have more critical points than vertices. To date, all determinations of superbridge indices of non-torus knots have come from matching this upper bound with one of the lower bounds from Theorems~\ref{bridge bound} or~\ref{3 superbridge}. In particular, this accounts for all exact values of superbridge index in \Cref{sec:exact values} aside from those for torus knots and those attributable to \Cref{thm:main}.

As mentioned in the introduction, this strategy cannot work in general: for any $n$ there are only finitely many knots with $\stick[K] \leq n$~\cite{Calvo:2001gv,Negami:1991gb}, but there are infinitely many 2-bridge knots and hence, by the next theorem, infinitely many knots with $\superbridge[K] \leq 5$.

\begin{theorem}[Adams et al.~\cite{Adams:2020vm}]\label{bridge upper bound}
	For any knot $K$, $\superbridge[K]\leq 3\bridge[K]-1$.
\end{theorem}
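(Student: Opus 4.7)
The plan is to construct, from a minimum bridge presentation of $K$, a specific geometric realization whose superbridge number is at most $3b-1$, where $b = \bridge[K]$.

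First, I would place $K$ in plat form with respect to a minimum bridge presentation: $2b$ points on a horizontal median plane, connected in pairs by $b$ upper arcs (each a bridge with a single local maximum above) and $b$ lower arcs (each an underbridge with a single local minimum below), with the underbridges forming the tangle or braid that realizes the knot type of $K$. Next, I would normalize the geometry: make each upper arc a round semicircular cap lying in a vertical plane and meeting the median plane orthogonally, and arrange the lower arcs in a thin slab beneath the median plane so that their tangent directions are confined to a small region of $S^2$.

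For a generic projection direction $\vec v \in S^2$, I would count local maxima of $\gamma \cdot \vec v$ by locating the points where the tangent to $\gamma$ is perpendicular to $\vec v$, or equivalently, where the tangent indicatrix $T_\gamma \subset S^2$ crosses the great circle $\vec v^\perp$. Each semicircular upper arc contributes a great-semicircle arc to $T_\gamma$, which generically meets $\vec v^\perp$ in exactly one point, so the $b$ bridges contribute at most $b$ local maxima. The remaining local maxima come from the underbridges and the corner points where bridges meet underbridges, and the goal is to bound this contribution by $2b - 1$, yielding the desired total of $3b - 1$.

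The main obstacle is the last step---bounding the underbridge contribution by $2b - 1$ uniformly in $\vec v$. Naively one might fear that each of the $b$ underbridges could contribute two or more local maxima as $\vec v$ sweeps over $S^2$, but the closed-curve constraint (local maxima and minima must pair up along $\gamma$) combined with the tight confinement of the underbridges' tangent indicatrices should save at least one critical point globally; the ``$-1$'' in the bound $3b-1$ is almost certainly a reflection of this parity. Making this counting argument rigorous and uniform across all projection directions, in particular near the degenerate $\vec v$ at which critical points collide or appear, is the principal technical difficulty and is where I would expect to spend most of the work.
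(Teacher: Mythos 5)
First, a point of order: the paper does not prove this statement---\Cref{bridge upper bound} is imported verbatim from Adams et al.~\cite{Adams:2020vm} and is used here only as a lower-bound-matching tool, so there is no in-paper argument to measure your proposal against. Judged on its own terms, your proposal starts from the right place (realize $K$ in a position adapted to a minimal bridge decomposition and bound the superbridge number of that one realization, counting critical points via intersections of the tangent indicatrix with $\vec v^{\perp}$), but it contains a genuine gap at exactly the step that carries all the content of the theorem, and the gap is not merely technical.

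The problem is the normalization of the lower tangle. Once you standardize the $b$ overbridges as round caps over straight chords, the entire knotting of $K$ must be absorbed by the $b$ underbridges: they form a trivial tangle in the lower ball, but they cannot be simultaneously isotoped (rel endpoints, staying disjoint) to arcs whose tangent directions all lie in a small region of $S^2$, since that would force them to be nearly straight chords and the resulting curve would be unknotted for most gluings. The weaker normalization you actually describe---a thin horizontal slab---only pushes the underbridge tangents toward the equator of $S^2$, and this does not control the count: for a near-horizontal direction $\vec v$, the great circle $\vec v^{\perp}$ is nearly vertical and crosses the equatorial band transversally, so each underbridge contributes roughly as many critical points as its planar projection has tangencies perpendicular to $\vec v$, which is unbounded in terms of $b$ (by Milnor's theorem the total curvature of the lower tangle grows with the complexity of the knot, and by a Crofton-type average some direction must see many crossings). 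The ``parity saves one'' observation can only explain the $-1$ in $3\bridge[K]-1$; it cannot reduce a per-underbridge contribution from ``arbitrarily many'' to ``at most $2$.'' The actual work in Adams et al.\ is precisely a construction of the lower tangle (not just a confinement of it) for which that per-underbridge bound holds uniformly in $\vec v$, and your proposal does not supply a candidate for it.
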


This bound is somewhat weaker than the one announced in~\cite{Adams:2009ha}, but it is still a significant improvement on the bound $\superbridge[K] \leq 5 \bridge[K]-3$ proved by Furstenberg, Li, and Schneider~\cite{Furstenberg:1998fq}. Notice, in particular, that this theorem implies $\superbridge[10_{37}] \leq 5$, even though the best extant bound on stick number is $\stick[10_{37}] \leq 12$~\cite{TomClay}.

Superbridge index is also bounded above by twice the braid index~\cite{Kuiper:1987ki} and by the harmonic index~\cite{Trautwein:1995wr,Trautwein:1998kn}, though these are less useful: the only knots through 11 crossings for which either of these bounds is better than those from Theorems~\ref{stick bound} and~\ref{bridge upper bound} are the 3-braid knots $11a_{240}$ and $11a_{338}$, both of which have bridge index 3~\cite{Musick:2012uz} and no known bound on stick number besides the general bound $\stick[K] \leq 18$ for all 11-crossing knots~\cite{Huh:2011co}.

\section{A New Approach} 
\label{sec:a_new_approach}

One of the major challenges in computing superbridge index is that the inequalities go the wrong way: if $\gamma$ is a closed curve in $\R^3$ and the projection of $\gamma$ to a line has $m$ local maxima, this shows that $\superbridge(\gamma) \geq m$. But this gives no information about $\superbridge[\gamma]$, since there could well be some other line on which the projection of $\gamma$ has $m+1$ local maxima (contrast with bridge index: in the hypothetical, it follows that $\bridge[\gamma] \leq \bridge(\gamma) \leq m$). This means it is not so easy to use particular realizations of a knot to give bounds on superbridge index.

Working with polygonal knots---that is, piecewise linear embeddings $\gamma: S^1 \to \R^3$---discretizes the problem and will turn out to yield bounds coming from Gordan's theorem. Rather than thinking in terms of the parametrization, it will be more convenient to represent polygonal knots by a list $\vec{e}_1, \dots , \vec{e}_n$ of edge vectors. 

In those terms,
\[
	\bridge_{\vec{v}}(\vec{e}_1, \dots , \vec{e}_n) = \#\{i \in [n] : \vec{e}_i \cdot \vec{v} >0 \text{ and } \vec{e}_{i+1} \cdot \vec{v} < 0\}
\]
with the convention that $\vec{e}_{n+1} = \vec{e}_1$. Here $\cdot$ is the usual dot product on $\R^3$.

The proof of \Cref{stick bound} implies that $\superbridge(\vec{e}_1, \dots , \vec{e}_n) \leq \frac{n}{2}$. Equality is achieved in this bound if and only if $n$ is even and the projection of each vertex to the line spanned by some $\vec{v} \in S^2$ is either a local minimum or a local maximum. In particular, this means that the list
\[
	\vec{v} \cdot \vec{e}_1, \vec{v} \cdot \vec{e}_2, \dots , \vec{v} \cdot \vec{e}_n
\]
must alternate signs. By replacing $\vec{v}$ with $-\vec{v}$ if necessary, it is no restriction to assume that $\vec{v} \cdot \vec{e}_1 > 0$, so that the sign pattern is $+1,-1,\dots ,-1$.

Equivalently, if $n$ is even and
\[
	E = \left[ \vec{e}_1 | -\vec{e}_2 | \cdots | -\vec{e}_n \right]
\]
is the $3 \times n$ matrix with the $(-1)^{i+1} \vec{e}_i$ as its columns, $\superbridge(\vec{e}_1, \dots , \vec{e}_n) < \frac{n}{2}$ if and only if there is no $\vec{v} \in \R^3$ so that $\vec{v}^T E$ has all positive entries. Contrapositively, the linear system $\vec{v}^T E > 0$ has a solution if and only if $\superbridge(\vec{e}_1, \dots , \vec{e}_n) = \frac{n}{2}$.

Gordan's theorem is a key tool for determining whether such systems of linear inequalities have solutions:

\begin{theorem}[Gordan~\cite{Gordan:1873dz}]\label{gordan}
	Suppose $A$ is a $k \times \ell$ real matrix. Then exactly one of the following is true:
	\begin{enumerate}
		\item \label{g1} There exists $\vec{v} \in \R^k$ so that $\vec{v}^T A$ has all positive entries.
		\item \label{g2} $A \vec{u} = \vec{0}$ for some nonzero vector $\vec{u} \in \R^\ell$ with nonnegative entries.
	\end{enumerate}
\end{theorem}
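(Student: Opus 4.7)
The plan is to deduce Gordan's theorem from the separating hyperplane theorem applied to the columns of $A$. Writing $\vec{a}_1, \dots, \vec{a}_\ell \in \R^k$ for the columns of $A$, the first alternative asserts the existence of $\vec{v}$ with $\vec{v} \cdot \vec{a}_i > 0$ for every $i$, while the second, after normalizing by the positive quantity $\sum_i u_i$, is equivalent to the statement that $\vec{0}$ lies in the convex hull $C := \operatorname{conv}(\vec{a}_1, \dots, \vec{a}_\ell)$. Once phrased this way, Gordan's theorem becomes an instance of strict separation of a point from a compact convex set.

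First I would dispatch the easy direction by showing the two alternatives cannot hold simultaneously: given $\vec{v}^T A > 0$ componentwise and $\vec{u} \geq \vec{0}$ nonzero, the scalar $\vec{v}^T A \vec{u} = \sum_i u_i\, (\vec{v} \cdot \vec{a}_i)$ is a nonnegative combination of strictly positive numbers with at least one positive coefficient, hence strictly positive, contradicting $A\vec{u} = \vec{0}$.

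For the substantive step I would show that if the second alternative fails then the first holds. Since the convex hull of finitely many points is compact and convex, $\vec{0} \notin C$ permits an appeal to the separating hyperplane theorem for compact convex sets, yielding $\vec{v} \in \R^k$ and a constant $c > 0$ with $\vec{v} \cdot \vec{x} \geq c$ for every $\vec{x} \in C$; in particular $\vec{v} \cdot \vec{a}_i \geq c > 0$ for each column, so $\vec{v}^T A$ has all positive entries. Concretely, one can take $\vec{v}$ to be the (unique) closest point to $\vec{0}$ in $C$ and use the standard variational argument to obtain the strict lower bound $\vec{v} \cdot \vec{a}_i \geq \|\vec{v}\|^2 > 0$.

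The only real obstacle is the separating hyperplane theorem itself, which in this finite-dimensional, compact setting is entirely standard; all of the actual content lies in the short reformulation of the second alternative as membership of the origin in a compact convex set. Alternative routes via Farkas' lemma or linear programming duality are available but ultimately reduce to the same underlying geometric fact.
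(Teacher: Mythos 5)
Your proof is correct, but it is worth noting that the paper does not actually prove \Cref{gordan} at all: it cites Gordan's original article and offers only an ``intuitive explanation,'' and even that explanation covers just the mutual-exclusivity half (the columns of $A$ lying in an open half-space cannot have a nontrivial conical combination equal to the origin --- which is exactly the geometric content of your computation $\vec{v}^T A\vec{u} = \sum_i u_i(\vec{v}\cdot\vec{a}_i) > 0$). What you add is the substantive direction the paper leaves to the literature: the observation that alternative (ii), after dividing by $\sum_i u_i > 0$, is precisely the statement $\vec{0} \in \operatorname{conv}(\vec{a}_1,\dots,\vec{a}_\ell)$, followed by strict separation of the origin from this compact convex set via the closest-point/variational argument giving $\vec{v}\cdot\vec{a}_i \geq \|\vec{v}\|^2 > 0$. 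This is the standard modern route to Gordan's theorem and is complete and self-contained in finite dimensions; the alternatives you mention (Farkas' lemma, LP duality) would be circular-feeling here since the paper explicitly frames Gordan's theorem as \emph{predating} Farkas. In short: your argument fills in a proof the paper deliberately omits, and the half it does sketch agrees with yours.
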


Among the class of theorems called theorems of the alternative (see, e.g.,~\cite{Tucker:1956ve} or~\cite[\S2.4]{Dantzig:2003ue}), Gordan's theorem was the first to appear, predating Farkas' Lemma~\cite{Farkas:1902bv} by almost 30 years. Here is an intuitive explanation of why Gordan's theorem is true: if \hyperref[g1]{(i)} is true, then the angles between $\vec{v}$ and each of the columns of $A$ are all less than $90^\circ$. But in this case the columns of $A$ all lie in the interior of a half-space, and no (nontrivial) conical combination produces the origin. On the other hand, \hyperref[g2]{(ii)} says exactly that the origin is a conical combination of the columns of $A$.

Gordan's theorem, combined with the preceding discussion, yields the following immediate corollary:

\begin{corollary}\label{cor:gordan bound}
	If $n$ is even and $\vec{e}_1, \dots , \vec{e}_n$ are the edge vectors of a closed polygonal curve in $\R^3$, then $\superbridge(\vec{e}_1, \dots , \vec{e}_n) < \frac{n}{2}$ if and only if there exists a nonzero vector $\vec{u} \in \R^n$ with nonnegative entries solving the matrix equation
	\begin{equation}\label{eq:gordan}
		\left[ \vec{e}_1 | -\vec{e}_2 | \cdots | -\vec{e}_n \right] \vec{u} = \vec{0}.
	\end{equation}
\end{corollary}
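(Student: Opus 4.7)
The plan is to assemble the pieces already laid out in the preceding paragraphs; once those are in place, the corollary is a one-step application of Gordan's theorem.

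First, I would restate the equivalence established in the discussion leading up to the corollary: for $n$ even, $\superbridge(\vec{e}_1, \dots, \vec{e}_n) = \frac{n}{2}$ if and only if there exists some $\vec{v} \in \R^3$ such that every coordinate of $\vec{v}^T E$ is strictly positive, where $E = [\vec{e}_1 \mid -\vec{e}_2 \mid \cdots \mid -\vec{e}_n]$ is the $3 \times n$ matrix appearing in \eqref{eq:gordan}. Contrapositively, $\superbridge(\vec{e}_1, \dots, \vec{e}_n) < \frac{n}{2}$ if and only if no such $\vec{v}$ exists, i.e., alternative (i) of \Cref{gordan} fails for $A = E$.

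Then I would invoke \Cref{gordan} to convert this into alternative (ii): the failure of (i) is equivalent to the existence of a nonzero $\vec{u} \in \R^n$ with nonnegative entries such that $E\vec{u} = \vec{0}$, which is precisely the matrix equation \eqref{eq:gordan}. Chaining the two equivalences yields the biconditional in the corollary.

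There is no real obstacle here—the substantive step of translating the extremal condition $\superbridge = \frac{n}{2}$ into the feasibility of a strict linear-inequality system in the exact form treated by Gordan's theorem has already been carried out in the paragraphs preceding the statement. The only point I would verify in passing is that the nonzero requirement on $\vec{v}$ in alternative (i) is automatic (a zero $\vec{v}$ cannot produce strictly positive $\vec{v}^T E$), so no edge cases arise when the two equivalences are composed. The proof really is the ``immediate corollary'' the author advertises.
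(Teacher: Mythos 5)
Your proposal is correct and matches the paper's argument exactly: the paper likewise treats the corollary as an immediate consequence of the preceding discussion (which reduces $\superbridge = \frac{n}{2}$ to the feasibility of $\vec{v}^T E > 0$) combined with \Cref{gordan}. Your added remark that $\vec{v} = \vec{0}$ can never satisfy alternative (i) is a harmless sanity check that the paper leaves implicit.
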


The vector $\vec{u}$ solving \eqref{eq:gordan} provides a certificate that there is no line onto which the polygonal curve can be projected with $\frac{n}{2}$ local maxima. This corollary is the essential result needed to prove \Cref{thm:main}.

\begin{mainthm}
	The knots $8_{1-15}$, $9_7$, $9_{16}$, $9_{20}$, $9_{26}$, $9_{28}$, $9_{32}$, and $9_{33}$ have superbridge index $\leq 4$, and $10_{76}$, $13n_{226}$, $13n_{328}$, $13n_{342}$, $13n_{343}$, $13n_{350}$, $13n_{512}$, $13n_{973}$, $13n_{2641}$, $13n_{5018}$, and $14n_{1753}$ have superbridge index $\leq 5$.
	
	This implies that the knots $8_1$, $8_2$, $8_3$, $8_5$, $8_6$, $8_7$, $8_8$, $8_{10}$, $8_{11}$, $8_{12}$, $8_{13}$, $8_{14}$, $8_{15}$, $9_7$, $9_{16}$, $9_{20}$, $9_{26}$, $9_{28}$, $9_{32}$, and $9_{33}$ have superbridge index equal to 4, and that $13n_{226}$, $13n_{328}$, $13n_{342}$, $13n_{343}$, $13n_{350}$, $13n_{512}$, $13n_{973}$, $13n_{2641}$, $13n_{5018}$, and $14n_{1753}$ have superbridge index equal to 5.
\end{mainthm}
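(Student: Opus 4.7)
The plan has two prongs: upper bounds coming from Corollary~\ref{cor:gordan bound} applied to explicit polygonal realizations, and matching lower bounds from the results in Section~\ref{sec:superbridge_index} together with the symmetric-group data supplied in Section~\ref{sec:snappy}.

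For the upper bounds, for each knot $K$ appearing in the statement, Section~\ref{sec:coords} supplies vertex coordinates for a polygonal realization of $K$ with $n = 10$ sticks (for the first batch, where $\superbridge[K] \leq 4$ is claimed) or $n = 12$ sticks (for the second batch, where $\superbridge[K] \leq 5$ is claimed). For each realization my plan is first to verify the knot type, for example by extracting a PD- or DT-code from a projection and comparing against a standard catalogue such as~\cite{knotinfo}, and second to exhibit a nonzero nonnegative vector $\vec{u} \in \R^n$ solving the linear system~\eqref{eq:gordan}. Such a $\vec{u}$ can be produced as a solution to a finite feasibility linear program; once written down it is verified by purely arithmetic substitution into the matrix equation. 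Corollary~\ref{cor:gordan bound} then immediately yields $\superbridge(\vec{e}_1,\ldots,\vec{e}_n) < n/2$, hence $\superbridge[K] \leq 4$ or $\leq 5$ as required.

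For the lower bounds I split into two cases. For the 8- and 9-crossing knots, every knot on the equality list falls outside the exceptional set of Theorem~\ref{3 superbridge} except for $8_7$, which is handled by Lemma~\ref{lem:8_7}; both conclusions give $\superbridge[K] \geq 4$, matching the upper bound. For the 13- and 14-crossing knots, I plan to use Theorem~\ref{bridge bound} and argue that each such $K$ has $\bridge[K] \geq 4$. Section~\ref{sec:snappy} provides, for each of these knots, a surjection $\pi_1(S^3 \setminus K) \twoheadrightarrow S_5$ under which every meridian maps to a transposition. Any $b$-bridge presentation of $K$ yields a generating set of the knot group consisting of $b$ meridians, whose images generate $S_5$ and hence form a set of at least $4$ transpositions (since transpositions generating $S_5$ correspond to a connected graph on five vertices, which needs at least four edges). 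Therefore $\bridge[K] \geq 4$, and Theorem~\ref{bridge bound} upgrades this to $\superbridge[K] \geq 5$. No lower bound is claimed for $10_{76}$, which appears only among the upper-bound knots.

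The principal obstacle is really the discovery problem underlying the upper bounds: one must locate polygonal realizations of each knot that admit a Gordan certificate, accomplished here by random sampling of equilateral polygons in tight confinement via~\cite{TomClay,stick-knot-gen} and then solving an LP for each candidate. Individual verifications are finite and mechanical; the conceptual work resides in the reduction to Corollary~\ref{cor:gordan bound} and in the empirical fact that such realizations exist at all for the knots listed.
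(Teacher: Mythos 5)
Your proposal matches the paper's proof essentially step for step: the upper bounds come from the integer-coordinate polygonal realizations in Appendix~\ref{sec:coords} together with an explicit nonnegative certificate vector $\vec{u}$ for \Cref{cor:gordan bound} (the paper finds these with Mathematica's {\tt FindInstance} rather than an LP solver, which is immaterial), and the lower bounds come from \Cref{3 superbridge} for the $8$- and $9$-crossing knots and from the $S_5$-homomorphisms of \Cref{sec:snappy} via \Cref{lem:homomorphism} and \Cref{bridge bound} for the $13$- and $14$-crossing knots. The only cosmetic difference is that you sketch the proof of \Cref{lem:homomorphism} (counting transpositions needed to generate $S_5$), which the paper simply cites as well-known.
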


\begin{proof}
	Coordinates of the vertices of polygonal realizations of each of these knots are given in \Cref{sec:coords} along with visualizations. For example, the entry for the $8_5$ knot is repeated in \Cref{tab:8_5}. The polygons were originally generated with coordinates given as double-precision floating point numbers, but to make it easier to verify the existence of exact solutions to \eqref{eq:gordan} these coordinates were rounded to three significant digits and converted to integers (while verifying that this did not change the knot type).
	
	\begin{table}
	\centering
	\begin{tabular*}{0.85\textwidth}{C{2.2in} R{.4in} R{.4in} R{.4in} | R{1in}}
	\multirow{10}{*}{\includegraphics[height=1.7in]{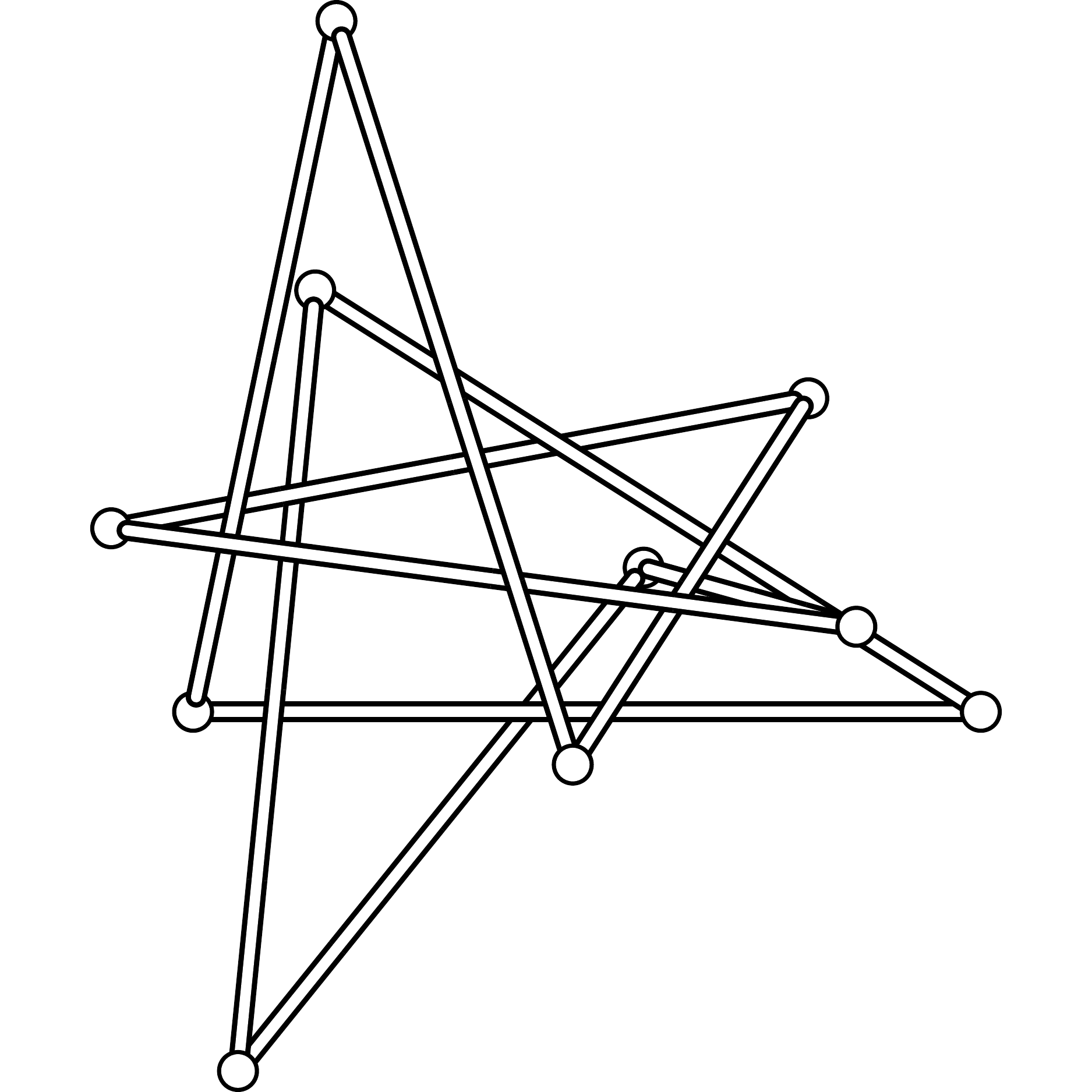}} & $0$ & $0$ & $0$ & $1$ \\
	& $1000$ & $0$ & $0$ & $1$ \\
	& $155$ & $535$ & $0$ & $8061667015$ \\
	& $57$ & $-456$ & $94$ & $1$ \\
	& $572$ & $183$ & $-478$ & $1$ \\
	& $842$ & $108$ & $482$ & $1$ \\
	& $-104$ & $233$ & $181$ & $496072961$ \\
	& $781$ & $398$ & $-254$ & $2237736971$ \\
	& $482$ & $-67$ & $579$ & $3514960071$ \\
	& $182$ & $877$ & $444$ & $4046282755$ \\
	\end{tabular*}
	\caption{Visualization of a 10-stick realization of the $8_5$ knot and the coordinates of its vertices. The rightmost column gives the entries of the vector $\vec{u}$ solving the equation in \Cref{cor:gordan bound}.}
	\label{tab:8_5}
	\end{table}

	In each case, the superbridge index bound will follow from \Cref{cor:gordan bound}, so the goal is to find an appropriate $\vec{u}$. In the case of $8_5$, it suffices to find a nonzero vector $\vec{u}$ with nonnegative entries so that
	\begin{equation}\label{eq:8_5}
		\begingroup
		\begin{bmatrix}
			1000 & 845 & -98 & -515 & 270 & 946 & 885 & 299 & -300 & 182 \\
			 0 & -535 & -991 & -639 & -75 & -125 & 165 & 465 & 944 & 877 \\
			 0 & 0 & 94 & 572 & 960 & 301 & -435 & -833 & -135 & 444 
		\end{bmatrix}
		\endgroup
		\vec{u} = 
		\begin{bmatrix}
			0 \\
			0 \\
			0
		\end{bmatrix}.
	\end{equation}
	The columns in the $3 \times 10$ matrix above are $(-1)^{i+1}\vec{e}_i$, where the $i$th edge vector $\vec{e}_i = \vec{v}_{i+1}-\vec{v}_i$, the vertices $\vec{v}_j$ have coordinates given in \Cref{tab:8_5}, and indices are computed cyclically so that $\vec{e}_{10} = \vec{v}_1-\vec{v}_{10}$. 
	
	It is straightforward to verify that
	\[
		\vec{u} =(1,\, 1,\, 8061667015,\, 1,\, 1,\, 1,\, 496072961,\, 2237736971,\, 3514960071,\, 4046282755)
	\]
	is a vector with all positive entries solving~\eqref{eq:8_5}, so this vector in conjunction with \Cref{cor:gordan bound} certifies that the superbridge number of this realization of $8_5$ is less than 5, and hence that $\superbridge[8_5] \leq 4$. The entries in $\vec{u}$ are given in the rightmost column in \Cref{tab:8_5}.
	
	A similar argument works for each of the knots in the statement of the theorem: particular vectors $\vec{u}$ solving the equation in \Cref{cor:gordan bound} are given in the rightmost column of each table in \Cref{sec:coords}. The certificate vectors $\vec{u}$ were found using Mathematica's {\tt FindInstance} function.

The second sentence in the statement of the theorem follows from observing that the upper bounds just proved match lower bounds coming from Theorems~\ref{bridge bound} and~\ref{3 superbridge}. Specifically, the knots $8_1$, $8_2$, $8_3$, $8_5$, $8_6$, $8_7$, $8_8$, $8_{10}$, $8_{11}$, $8_{12}$, $8_{13}$, $8_{14}$, $8_{15}$, $9_7$, $9_{16}$, $9_{20}$, $9_{26}$, $9_{32}$, and $9_{33}$ all have superbridge index $\geq 4$ by \Cref{3 superbridge}, and hence their superbridge index must be exactly 4. 

As for the higher-crossing knots, the result will follow if their bridge indices are bounded below by 4, since then \Cref{bridge bound} implies that their superbridge indices are at least 5. Using \Cref{lem:homomorphism}---stated below---in each case it suffices to find a surjective homomorphism $\pi_1(S^3\backslash K)\! \twoheadrightarrow S_5$  from the knot group $\pi_1(S^3\backslash K)$ to the symmetric group $S_5$ sending meridians to transpositions.  \Cref{tab:K13n350} shows a diagram of $13n_{350}$ with strand labels defining such a homomorphism. Specifically, labeling the strand $(-7,-5)$ with the transposition $(1\,2)$, the strand $(-8,9,-10)$ with $(1\,3)$, the strand $(-6,7,11,-9)$ with $(1\,4)$, and the strand $(-2,3,-1)$ with $(2\,5)$ induces a complete labeling of strands when the labels are propagated to the other strands via the Wirtinger relations. This labeling satisfies the Wirtinger relations by construction and $\{(1\,2), (1\,3), (1\,4), (2\,5)\}$ is a generating set for $S_5$, so this induces the desired surjective homomorphism $\pi_1(S^3\backslash 13n_{350})\! \twoheadrightarrow S_5$. 

\Cref{sec:snappy} gives similar homomorphisms for each of 13- and 14-crossing knots in the statement of the theorem, completing the proof. All of these homomorphisms were found using a preliminary version of {\tt Wirt\_Hm\_Suite}~\cite{Wirt-Hm-Suite}, which is being developed as part of forthcoming work by Blair, Kjuchukova, and Morrison~\cite{RyanNateSashka}.
\end{proof}

The following lemma giving a lower bound in bridge index is well-known. It is a classical example of a more general strategy that gives lower bounds on bridge index by finding a surjective homomorphism from the knot group to a group with nice properties~\cite{Baader:2019uf,Baader:2019vm}.

\begin{lemma}\label{lem:homomorphism}
		Let $K$ be a knot and $S_n$ the symmetric group on $n$ elements. If the knot group $\pi_1\!\left(S^3\, \backslash K\right)\!$ admits a surjective homomorphism to $S_n$ such that every meridian is sent to a transposition, then $\bridge[K] \geq n-1$.
\end{lemma}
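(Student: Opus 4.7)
The plan is to pass to a minimal bridge presentation of $K$ and exploit the resulting structure on the knot group. Set $b = \bridge[K]$ and choose a height function on $S^3$ realizing this minimum, so that $K$ decomposes as the union of $b$ \emph{overstrands} (each containing one local maximum) and $b$ \emph{underarcs} (each containing one local minimum), separated by a bridge sphere $\Sigma$ which bounds two balls $B_+$ and $B_-$ in $S^3$.

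The central fact needed is that the $b$ meridians $m_1, \dots, m_b$ of the overstrands generate all of $\pi_1(S^3\setminus K)$. This can be seen via van Kampen applied to the cover $S^3\setminus K = (B_+\setminus K)\cup(B_-\setminus K)$: since the overstrands are trivially embedded arcs in $B_+$, the complement $B_+\setminus K$ is homotopy equivalent to a wedge of $b$ circles and has free fundamental group on $m_1, \dots, m_b$, and similarly $B_-\setminus K$ has free $\pi_1$ on meridians $m'_1, \dots, m'_b$ of the underarcs. The inclusion $B_+\setminus K \hookrightarrow S^3\setminus K$ is surjective on $\pi_1$ because each $m'_j$ is conjugate in the full knot group to an overstrand meridian via the shared boundary punctures on $\Sigma$ where the $j$th underarc meets an overstrand.

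With the generation fact in hand, the hypothesized surjection $\phi: \pi_1(S^3\setminus K) \twoheadrightarrow S_n$ sends $m_1, \dots, m_b$ to transpositions $t_1, \dots, t_b$ that together generate $S_n$. Now invoke the standard combinatorial fact: associating to a set of transpositions the graph on $\{1, \dots, n\}$ with an edge $\{i,j\}$ for each transposition $(i\, j)$, the set generates $S_n$ if and only if the graph is connected (the subgroup generated acts transitively precisely when the graph is connected, and a transitive subgroup of $S_n$ containing a transposition is all of $S_n$). Any connected graph on $n$ vertices has at least $n-1$ edges, so $b\geq n-1$.

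The main technical obstacle is the generation claim that the $b$ overstrand meridians already suffice to generate $\pi_1(S^3\setminus K)$; this is classical in 3-manifold topology but does require careful bookkeeping of basepoints and conjugating elements in the van Kampen step. Once it is established, the transposition/graph combinatorics producing the bound $b\geq n-1$ is elementary.
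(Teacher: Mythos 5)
Your argument is correct in outline and is, in essence, the standard proof of this classical fact. Note that the paper does not actually prove the lemma: it states it as well known and points to the meridional-rank literature (Baader et al.), so your write-up supplies exactly the argument lying behind those citations. The two ingredients you isolate are the right ones: first, a knot in $b$-bridge position has its group generated by the $b$ meridians of the overstrands (meridional rank is at most bridge number); second, a set of transpositions generating $S_n$ determines a connected graph on $n$ vertices and hence contains at least $n-1$ distinct transpositions. Your van Kampen sketch of the first ingredient is slightly too quick as written: knowing that each underarc meridian $m'_j$ is \emph{conjugate} in $\pi_1(S^3\backslash K)$ to an overstrand meridian does not by itself put $m'_j$ in the subgroup generated by the overstrand meridians, since the conjugating element must also be controlled. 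The clean fix is to induct along each underarc (equivalently, to run the Wirtinger presentation of the bridge diagram), expressing the generator of each successive subarc of an underarc as a conjugate of the previous one by an overstrand meridian. You flag this bookkeeping issue yourself, and the fact is genuinely classical, so this is a presentational gap rather than a mathematical one.

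One actual error, though a harmless one: your parenthetical claim that ``a transitive subgroup of $S_n$ containing a transposition is all of $S_n$'' is false --- the subgroup $\left\langle (1\,2\,3\,4), (1\,3)\right\rangle$ of $S_4$ is transitive, has order $8$, and contains a transposition. The correct statements are that a transitive subgroup \emph{generated by} transpositions is all of $S_n$, or that a \emph{primitive} subgroup containing a transposition is all of $S_n$. Fortunately your proof only uses the direction you justify correctly: if the images of the $b$ generating meridians generate $S_n$, then the generated group acts transitively, so the associated graph is connected, hence has at least $n-1$ edges, and therefore $\bridge[K] = b \geq n-1$. So the bound stands.
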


\setlength{\tabcolsep}{2pt}

\begin{table}
	\begin{center}
	\begin{tabular*}{0.7\textwidth}{C{2.5in} R{1in} L{.45in}}
	\multicolumn{3}{c}{$13n_{350}$} \\
	\midrule
	\multirow{15}{*}{\includegraphics[scale=0.5]{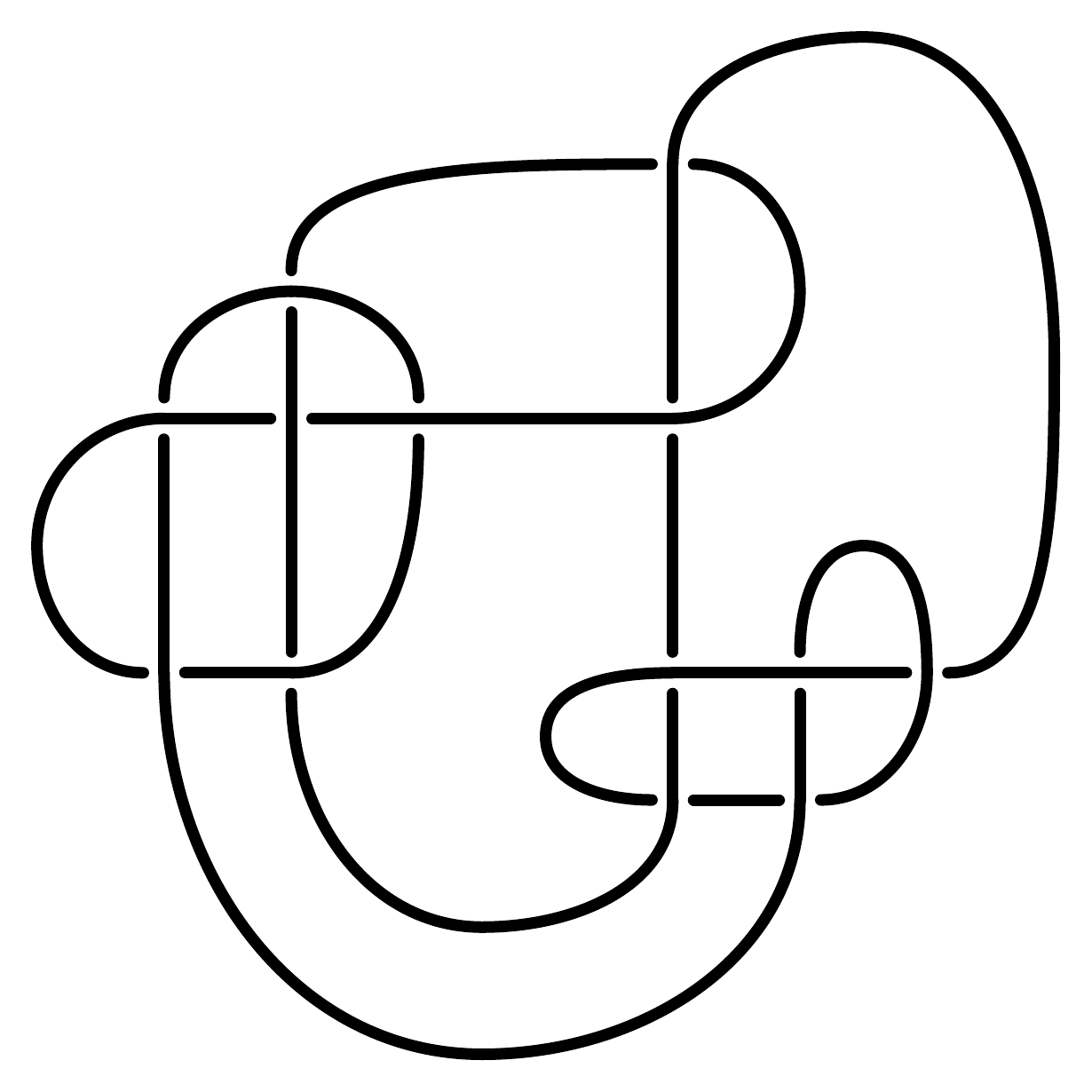}
			\put(-47,50){$1$}
			\put(-46,70){$2$}
			\put(-25,70){$3$}
			\put(-66,50){$4$}
			\put(-66,70){$5$}
			\put(-66,153){$6$}
			\put(-66,112){$7$}
			\put(-129,71){$8$}
			\put(-129,111){$9$}
			\put(-129,132){$10$}
			\put(-109,111){$11$}
			\put(-150,111){$12$}
			\put(-150,71){$13$}
			\put(-88,87){$\mathbf{(1\,2)}$}
			\put(-149.5,90){$\mathbf{(1\,3)}$}
			\put(-49,115){$\mathbf{(1\,4)}$}
			\put(-30,48){$\mathbf{(2\,5)}$}
			\put(-115,35){$(1\,2)$}
			\put(-118,155){$(1\,2)$}
			\put(-160,131){$(2\,3)$}
			\put(-112,78){$(2\,3)$}
			\put(-18,164){$(2\,4)$}
			\put(-155,17){$(2\,4)$}
			\put(-189,75){$(3\,4)$}
			\put(-68,38){$(4\,5)$}
			\put(-108,51){$(4\,5)$}
			}
	& $(-7, -5)$ & $\mapsto (1\,2)$ \\
	& $(-8, 9, -10)$ & $\mapsto (1\,3)$ \\
	& $(-6, 7, 11, -9)$ & $\mapsto (1\,4)$ \\
	& $(-2, 3, -1)$ & $\mapsto (2\,5)$\\
	& & \\
	& & \\
	& & \\
	& & \\
	& & \\
	& & \\
	& & \\
	& & \\
	& & \\
	\end{tabular*}
	\end{center}
	\caption{The SnapPy~\cite{snappy} diagram of $13n_{350}$ with crossings labeled $1,2,\dots,13$. A surjective homomorphism $\pi_1(S^3 \backslash 13n_{350}) \twoheadrightarrow S_5$ is defined by mapping Wirtinger generators to transpositions as indicated to the right. The strands in the diagram, which correspond to Wirtinger generators, are specified by indicating the under crossings at which each strand begins and ends, and the over crossings (if any) along the way. For example, $(-6,7,11,-9)$ starts at the under strand of crossing 6, passes over crossings 7 and 11, and then ends as the under strand at crossing 9. The images of the remaining Wirtinger generators are determined by applying the Wirtinger relations at each crossing. In this diagram (though not in the diagrams in \Cref{sec:snappy}), all strands are labelled with their image in $S_5$; the generating labels are bolded.}
	\label{tab:K13n350}
\end{table}

\section{Conclusion} 
\label{sec:conclusion}

The superbridge index remains frustratingly unknown for all the knots mentioned in \Cref{3 superbridge} except $3_1$ and $4_1$. \Cref{thm:main} eliminated the possibility that $\superbridge[8_4]$ or $\superbridge[8_9]$ is equal to 5, so each of the knots $5_2$, $6_1$, $6_2$, $6_3$, $7_2$, $7_3$, $7_4$, $8_4$, and $8_9$ has superbridge index equal to either 3 or 4.

\begin{conjecture}[Jeon--Jin~\cite{Jeon:2002gm}]\label{conj:jeon-jin}
	The only knots with superbridge index equal to 3 are $3_1$ and $4_1$.
\end{conjecture}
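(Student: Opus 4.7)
The plan is to reduce the conjecture to a short, finite check. By \Cref{3 superbridge} and \Cref{lem:8_7}, it suffices to prove $\superbridge[K] \geq 4$ for each of the nine remaining candidates $5_2$, $6_1$, $6_2$, $6_3$, $7_2$, $7_3$, $7_4$, $8_4$, and $8_9$. All nine are 2-bridge, so \Cref{bridge bound} alone gives only $\superbridge[K] \geq 3$, and the natural approach is to sharpen the quadrisecant analysis of Jeon and Jin using Denne's alternating-quadrisecant theorem, exactly following the strategy of \Cref{lem:8_7}.

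First I would rerun the Jeon--Jin case analysis from scratch under the standing hypothesis that the quadrisecant $\ell$ of a hypothetical superbridge-$3$ representative $\gamma$ is alternating. Their enumeration produced a finite catalog of planar projections orthogonal to $\ell$ compatible with $\superbridge(\gamma) = 3$; fixing the cyclic order of the four points of $\ell \cap \gamma$ along $\gamma$ to be alternating substantially restricts the possible interleavings of the four arcs of $\gamma \setminus \ell$ in such a projection. I would then compare the revised catalog to the diagrams of the nine candidate knots and eliminate each one for which no compatible configuration survives, mirroring the proof of \Cref{lem:8_7}.

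The main obstacle is that some of the nine candidates are likely to remain after this combinatorial reduction, since Jeon and Jin's original argument was already quite tight and the elimination of $8_7$ may have been exceptional. For the survivors, a genuinely new obstruction is needed. One promising direction is to exploit the classification of $2$-bridge knots by a rational fraction $p/q$: a superbridge-$3$ realization should translate, through the orthogonal projection to $\ell$, into a highly restricted Conway-type tangle diagram whose continued fraction expansion can realize only a short list of fractions, which can be checked against each candidate individually. A complementary direction is to incorporate higher-order multisecants such as pentasecants, whose existence is sensitive to invariants like the genus or signature of the knot and may be strong enough to rule out the remaining cases one by one. Absent such a sharpening, the conjecture will almost certainly remain open, even though the list of outstanding candidates is so short.
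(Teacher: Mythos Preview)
The statement you are attempting to prove is \Cref{conj:jeon-jin}, which the paper records as an \emph{open conjecture}, not a theorem. There is no proof in the paper to compare against: immediately after stating the conjecture, the paper explicitly notes that its methods ``cannot help prove this conjecture'' and discusses only the possibility of finding a counterexample. So the baseline for comparison is simply that the paper does not claim a proof at all.

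Your proposal is candid about this situation, and in fact it is not a proof but a research outline. The first paragraph correctly reduces the problem to the nine candidates via \Cref{3 superbridge} and \Cref{lem:8_7}, and the second paragraph proposes rerunning the Jeon--Jin enumeration under Denne's alternating-quadrisecant hypothesis. That is exactly the mechanism behind \Cref{lem:8_7}, but as you yourself observe, the elimination of $8_7$ worked only because \emph{every} occurrence of $8_7$ in Jeon and Jin's catalog happened to arise from a non-alternating quadrisecant; there is no reason to expect this accident for the remaining nine knots, and your third paragraph concedes that ``some of the nine candidates are likely to remain.'' The suggested fallbacks---a continued-fraction constraint on 2-bridge fractions, or obstructions from higher-order multisecants---are speculative directions rather than arguments, and you close by acknowledging that ``the conjecture will almost certainly remain open.'' That is an accurate assessment, but it means what you have written is a plausible plan of attack on an open problem, not a proof.
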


While the approach using Gordan's theorem described in \Cref{sec:a_new_approach} above cannot help prove this conjecture, it might be useful in finding a counterexample if one exists. 

The specific polygonal realizations of knots used in the proof of \Cref{thm:main} were found by generating large ensembles of random equilateral polygons in tight confinement using the algorithm described in~\cite{TomClay}. Between \Cref{thm:main} and previous work~\cite{Blair:2020kv,TomClay}, upper bounds coming from random polygons generated in this way have increased the total number of prime, non-torus knots for which superbridge index is known from 23 to 71. While there are limits to this approach, especially in the absence of more refined lower bounds on superbridge index, its usefulness is probably not yet entirely exhausted. In the future, it might be helpful to explore algorithms for generating polygons with variable (rather than fixed) edgelengths in tight confinement.

Finally, there is a paucity of useful lower bounds on superbridge index beyond those given in Theorems~\ref{bridge bound} and~\ref{3 superbridge} (Adams et al.~\cite{Adams:2010do} notwithstanding). Lower bounds coming from algebraic information---analogous to the bridge index bound from \Cref{lem:homomorphism}---would be particularly welcome and might dramatically expand our ability to compute superbridge indices of particular knots.

\subsection*{Acknowledgments} 
\label{sub:acknowledgments}

I am grateful to Jason Cantarella for introducing me to random polygons, to Tom Eddy for getting me serious about generating massive ensembles of them, and to Ryan Blair for sharing his knowledge about bridge index. As usual, thanks to Allison Moore and Chuck Livingston for maintaining KnotInfo~\cite{knotinfo}, which is an invaluable resource. This work was partially supported by grants from the Simons Foundation (\#354225 and \#709150).

\clearpage 

\appendix

\section{Table of Superbridge Indices}\label{sec:table}

Superbridge indices for prime knots through 10 crossings are given below, including references for where these results were proved. If the exact superbridge index is not known, the possible values, as determined by known upper and lower bounds, are given.

In all cases, the lower bound comes from either Kuiper's result $\bridge[K]<\superbridge[K]$~\cite[stated as \Cref{bridge bound} above]{Kuiper:1987ki} or Jeon and Jin's characterization of 3-superbridge knots~\cite[stated as \Cref{3 superbridge} above]{Jeon:2002gm}, so these references are not explicitly included in the table below. Most of the upper bounds come from Jin's bound $\superbridge[K] \leq \frac{1}{2}\stick[K]$~\cite[stated as \Cref{stick bound} above]{Jin:1997da}, but not all (e.g., $8_4$, $10_{37}$, and $10_{76}$), so this reference is included when relevant, along with reference(s) to the best upper bound on stick number. An up-to-date table of stick number bounds is given in~\cite{TomClay} or online at the {\tt stick-knot-gen} project~\cite{stick-knot-gen}.

\renewcommand{\theoremautorefname}{Thm.}

\setlength{\tabcolsep}{6pt}

\begin{center}
\begin{multicols*}{3}
\TrickSupertabularIntoMulticols

\tablefirsthead{
	\multicolumn{1}{l}{$K$} & \multicolumn{2}{l}{$\superbridge[K]$} \\
	\midrule
}
\tablehead{
	\multicolumn{1}{l}{$K$} & \multicolumn{2}{l}{$\superbridge[K]$} \\
	\midrule
}
\tablelasttail{\bottomrule}

\begin{supertabular*}{.26\textwidth}{lll} \label{tab:stick numbers}
$ 0_{1} $ & 1 &  \\
$ 3_{1} $ & 3 & \cite{Kuiper:1987ki} \\
$ 4_{1} $ & 3 & \cite{Jin:1997da,Randell:1994bx} \\
$ 5_{1} $ & 4 & \cite{Kuiper:1987ki} \\
$ 5_{2} $ & 3 or 4 & \cite{Jin:1997da} \\
$ 6_{1} $ & 3 or 4 & \cite{Jin:1997da,Negami:1991gb, Meissen:1998wu} \\
$ 6_{2} $ & 3 or 4 & \cite{Jin:1997da,Negami:1991gb, Meissen:1998wu} \\
$ 6_{3} $ & 3 or 4 & \cite{Jin:1997da,Negami:1991gb, Meissen:1998wu} \\
\midrule
$ 7_{1} $ & 4 & \cite{Kuiper:1987ki}\\
$ 7_{2} $ & 3 or 4 & \cite{Jin:1997da, Meissen:1998wu}\\
$ 7_{3} $ & 3 or 4 & \cite{Jin:1997da, Meissen:1998wu}\\
$ 7_{4} $ & 3 or 4 & \cite{Jin:1997da, Meissen:1998wu}\\
$ 7_{5} $ & 4 & \cite{Jin:1997da, Meissen:1998wu}\\
$ 7_{6} $ & 4 & \cite{Jin:1997da, Meissen:1998wu}\\
$ 7_{7} $ & 4 & \cite{Jin:1997da, Meissen:1998wu}\\
\midrule
$ 8_{1} $ & 4 & \autoref{thm:main} \\
$ 8_{2} $ & 4 & \autoref{thm:main} \\
$ 8_{3} $ & 4 & \autoref{thm:main} \\
$ 8_{4} $ & 3 or 4 & \autoref{thm:main} \\
$ 8_{5} $ & 4 & \autoref{thm:main} \\
$ 8_{6} $ & 4 & \autoref{thm:main} \\
$ 8_{7} $ & 4 & \autoref{thm:main} \\
$ 8_{8} $ & 4 & \autoref{thm:main} \\
$ 8_{9} $ & 3 or 4 & \autoref{thm:main} \\
$ 8_{10} $ & 4 & \autoref{thm:main} \\
$ 8_{11} $ & 4 & \autoref{thm:main} \\
$ 8_{12} $ & 4 & \autoref{thm:main} \\
$ 8_{13} $ & 4 & \autoref{thm:main} \\
$ 8_{14} $ & 4 & \autoref{thm:main} \\
$ 8_{15} $ & 4 & \autoref{thm:main} \\
$ 8_{16} $ & 4 & \cite{Jin:1997da, Rawdon:2002wj} \\
$ 8_{17} $ & 4 & \cite{Jin:1997da, Rawdon:2002wj} \\
$ 8_{18} $ & 4 &  \cite{Calvo:2001gv, Jin:1997da, Rawdon:2002wj} \\
$ 8_{19} $ & 4 & \cite{Kuiper:1987ki} \\
$ 8_{20} $ & 4 & \cite{Jin:1997da, Negami:1991gb, Meissen:1998wu} \\
$ 8_{21} $ & 4 & \cite{Jin:1997da, Meissen:1998wu}\\
\midrule
$ 9_{1} $ & 4 & \cite{Kuiper:1987ki} \\
$ 9_{2} $ & 4 or 5 & \cite{TomClay, Jin:1997da} \\
$ 9_{3} $ & 4 or 5 & \cite{TomClay, Jin:1997da} \\
$ 9_{4} $ & 4 or 5 & \cite{Jin:1997da, Rawdon:2002wj} \\
$ 9_{5} $ & 4 or 5 & \cite{Jin:1997da, Rawdon:2002wj} \\
$ 9_{6} $ & 4 or 5 & \cite{Jin:1997da, Rawdon:2002wj} \\
$ 9_{7} $ & 4 & \autoref{thm:main} \\
$ 9_{8} $ & 4 or 5 & \cite{Jin:1997da, Rawdon:2002wj} \\
$ 9_{9} $ & 4 or 5 & \cite{Jin:1997da, Rawdon:2002wj} \\
$ 9_{10} $ & 4 or 5 & \cite{Jin:1997da, Rawdon:2002wj} \\
$ 9_{11} $ & 4 or 5 & \cite{TomClay, Jin:1997da} \\
$ 9_{12} $ & 4 or 5 & \cite{Jin:1997da, Rawdon:2002wj} \\
$ 9_{13} $ & 4 or 5 & \cite{Jin:1997da, Rawdon:2002wj} \\
$ 9_{14} $ & 4 or 5 & \cite{Jin:1997da, Rawdon:2002wj} \\
$ 9_{15} $ & 4 or 5 & \cite{TomClay, Jin:1997da} \\
$ 9_{16} $ & 4 & \autoref{thm:main} \\
$ 9_{17} $ & 4 or 5 & \cite{Jin:1997da, Rawdon:2002wj} \\
$ 9_{18} $ & 4 or 5 & \cite{Jin:1997da, Rawdon:2002wj} \\
$ 9_{19} $ & 4 or 5 & \cite{Jin:1997da, Rawdon:2002wj} \\
$ 9_{20} $ & 4 & \autoref{thm:main} \\
$ 9_{21} $ & 4 or 5 & \cite{TomClay, Jin:1997da} \\
$ 9_{22} $ & 4 or 5 & \cite{Jin:1997da, Rawdon:2002wj} \\
$ 9_{23} $ & 4 or 5 & \cite{Jin:1997da, Rawdon:2002wj} \\
$ 9_{24} $ & 4 or 5 & \cite{Jin:1997da, Rawdon:2002wj} \\
$ 9_{25} $ & 4 or 5 & \cite{TomClay, Jin:1997da} \\
$ 9_{26} $ & 4 & \autoref{thm:main} \\
$ 9_{27} $ & 4 or 5 & \cite{TomClay, Jin:1997da} \\
$ 9_{28} $ & 4 & \autoref{thm:main} \\
$ 9_{29} $ & 4 & \cite{Jin:1997da, Scharein:1998tu} \\
$ 9_{30} $ & 4 or 5 & \cite{Jin:1997da, Rawdon:2002wj} \\
$ 9_{31} $ & 4 or 5 & \cite{Jin:1997da, Rawdon:2002wj} \\
$ 9_{32} $ & 4 & \autoref{thm:main} \\
$ 9_{33} $ & 4 & \autoref{thm:main} \\
$ 9_{34} $ & 4 & \cite{Jin:1997da, Rawdon:2002wj} \\
$ 9_{35} $ & 4 & \cite{TomClay, Jin:1997da} \\
$ 9_{36} $ & 4 or 5 & \cite{Jin:1997da, Rawdon:2002wj} \\
$ 9_{37} $ & 4 or 5 & \cite{Jin:1997da, Rawdon:2002wj} \\
$ 9_{38} $ & 4 or 5 & \cite{Jin:1997da, Rawdon:2002wj} \\
$ 9_{39} $ & 4 & \cite{TomClay, Jin:1997da} \\
$ 9_{40} $ & 4 & \cite{Jin:1997da} \\
$ 9_{41} $ & 4 & \cite{Jin:1997da} \\
$ 9_{42} $ & 4 & \cite{Jin:1997da} \\
$ 9_{43} $ & 4 & \cite{TomClay, Jin:1997da} \\
$ 9_{44} $ & 4 & \cite{Jin:1997da, Rawdon:2002wj} \\
$ 9_{45} $ & 4 & \cite{TomClay, Jin:1997da} \\
$ 9_{46} $ & 4 & \cite{Jin:1997da} \\
$ 9_{47} $ & 4 & \cite{Jin:1997da, Rawdon:2002wj} \\
$ 9_{48} $ & 4 & \cite{TomClay, Jin:1997da} \\
$ 9_{49} $ & 4 & \cite{Jin:1997da, Rawdon:2002wj} \\
\midrule
$ 10_{1} $ & 4 or 5 & \cite{Jin:1997da, Rawdon:2002wj} \\
$ 10_{2} $ & 4 or 5 & \cite{Jin:1997da, Rawdon:2002wj} \\
$ 10_{3} $ & 4 or 5 & \cite{TomClay, Jin:1997da} \\
$ 10_{4} $ & 4 or 5 & \cite{Jin:1997da, Rawdon:2002wj} \\
$ 10_{5} $ & 4 or 5 & \cite{Jin:1997da, Rawdon:2002wj} \\
$ 10_{6} $ & 4 or 5 & \cite{TomClay, Jin:1997da} \\
$ 10_{7} $ & 4 or 5 & \cite{TomClay, Jin:1997da} \\
$ 10_{8} $ & 4 or 5 & \cite{TomClay, Jin:1997da} \\
$ 10_{9} $ & 4 or 5 & \cite{Jin:1997da, Rawdon:2002wj} \\
$ 10_{10} $ & 4 or 5 & \cite{TomClay, Jin:1997da} \\
$ 10_{11} $ & 4 or 5 & \cite{Jin:1997da, Rawdon:2002wj} \\
$ 10_{12} $ & 4 or 5 & \cite{Jin:1997da, Rawdon:2002wj} \\
$ 10_{13} $ & 4 or 5 & \cite{Jin:1997da, Rawdon:2002wj} \\
$ 10_{14} $ & 4 or 5 & \cite{Jin:1997da, Rawdon:2002wj} \\
$ 10_{15} $ & 4 or 5 & \cite{TomClay, Jin:1997da} \\
$ 10_{16} $ & 4 or 5 & \cite{TomClay, Jin:1997da} \\
$ 10_{17} $ & 4 or 5 & \cite{TomClay, Jin:1997da} \\
$ 10_{18} $ & 4 or 5 & \cite{TomClay, Jin:1997da} \\
$ 10_{19} $ & 4 or 5 & \cite{Jin:1997da, Rawdon:2002wj} \\
$ 10_{20} $ & 4 or 5 & \cite{TomClay, Jin:1997da} \\
$ 10_{21} $ & 4 or 5 & \cite{TomClay, Jin:1997da} \\
$ 10_{22} $ & 4 or 5 & \cite{TomClay, Jin:1997da} \\
$ 10_{23} $ & 4 or 5 & \cite{TomClay, Jin:1997da} \\
$ 10_{24} $ & 4 or 5 & \cite{TomClay, Jin:1997da} \\
$ 10_{25} $ & 4 or 5 & \cite{Jin:1997da, Rawdon:2002wj} \\
$ 10_{26} $ & 4 or 5 & \cite{TomClay, Jin:1997da} \\
$ 10_{27} $ & 4 or 5 & \cite{Jin:1997da, Rawdon:2002wj} \\
$ 10_{28} $ & 4 or 5 & \cite{TomClay, Jin:1997da} \\
$ 10_{29} $ & 4 or 5 & \cite{Jin:1997da, Rawdon:2002wj} \\
$ 10_{30} $ & 4 or 5 & \cite{TomClay, Jin:1997da} \\
$ 10_{31} $ & 4 or 5 & \cite{TomClay, Jin:1997da} \\
$ 10_{32} $ & 4 or 5 & \cite{Jin:1997da, Rawdon:2002wj} \\
$ 10_{33} $ & 4 or 5 & \cite{Jin:1997da, Rawdon:2002wj} \\
$ 10_{34} $ & 4 or 5 & \cite{TomClay, Jin:1997da} \\
$ 10_{35} $ & 4 or 5 & \cite{TomClay, Jin:1997da} \\
$ 10_{36} $ & 4 or 5 & \cite{Jin:1997da, Rawdon:2002wj} \\
$ 10_{37} $ & 4 or 5 & \cite{Adams:2020vm} \\
$ 10_{38} $ & 4 or 5 & \cite{TomClay, Jin:1997da} \\
$ 10_{39} $ & 4 or 5 & \cite{TomClay, Jin:1997da} \\
$ 10_{40} $ & 4 or 5 & \cite{Jin:1997da, Rawdon:2002wj} \\
$ 10_{41} $ & 4 or 5 & \cite{Jin:1997da, Rawdon:2002wj} \\
$ 10_{42} $ & 4 or 5 & \cite{Jin:1997da, Rawdon:2002wj} \\
$ 10_{43} $ & 4 or 5 & \cite{TomClay, Jin:1997da} \\
$ 10_{44} $ & 4 or 5 & \cite{TomClay, Jin:1997da} \\
$ 10_{45} $ & 4 or 5 & \cite{Jin:1997da, Rawdon:2002wj} \\
$ 10_{46} $ & 4 or 5 & \cite{TomClay, Jin:1997da} \\
$ 10_{47} $ & 4 or 5 & \cite{TomClay, Jin:1997da} \\
$ 10_{48} $ & 4 or 5 & \cite{Jin:1997da, Rawdon:2002wj} \\
$ 10_{49} $ & 4 or 5 & \cite{Jin:1997da, Rawdon:2002wj} \\
$ 10_{50} $ & 4 or 5 & \cite{TomClay, Jin:1997da} \\
$ 10_{51} $ & 4 or 5 & \cite{TomClay, Jin:1997da} \\
$ 10_{52} $ & 4 or 5 & \cite{Jin:1997da, Rawdon:2002wj} \\
$ 10_{53} $ & 4 or 5 & \cite{TomClay, Jin:1997da} \\
$ 10_{54} $ & 4 or 5 & \cite{TomClay, Jin:1997da} \\
$ 10_{55} $ & 4 or 5 & \cite{TomClay, Jin:1997da} \\
$ 10_{56} $ & 4 or 5 & \cite{TomClay, Jin:1997da} \\
$ 10_{57} $ & 4 or 5 & \cite{TomClay, Jin:1997da} \\
$ 10_{58} $ & 4, 5, or 6 & \cite{Jin:1997da, Rawdon:2002wj} \\
$ 10_{59} $ & 4 or 5 & \cite{Jin:1997da, Rawdon:2002wj} \\
$ 10_{60} $ & 4 or 5 & \cite{Jin:1997da, Rawdon:2002wj} \\
$ 10_{61} $ & 4 or 5 & \cite{Jin:1997da, Rawdon:2002wj} \\
$ 10_{62} $ & 4 or 5 & \cite{TomClay, Jin:1997da} \\
$ 10_{63} $ & 4 or 5 & \cite{Jin:1997da, Rawdon:2002wj} \\
$ 10_{64} $ & 4 or 5 & \cite{TomClay, Jin:1997da} \\
$ 10_{65} $ & 4 or 5 & \cite{TomClay, Jin:1997da} \\
$ 10_{66} $ & 4, 5, or 6 & \cite{Jin:1997da, Rawdon:2002wj} \\
$ 10_{67} $ & 4 or 5 & \cite{Jin:1997da, Rawdon:2002wj} \\
$ 10_{68} $ & 4 or 5 & \cite{TomClay, Jin:1997da} \\
$ 10_{69} $ & 4 or 5 & \cite{Jin:1997da, Rawdon:2002wj} \\
$ 10_{70} $ & 4 or 5 & \cite{TomClay, Jin:1997da} \\
$ 10_{71} $ & 4 or 5 & \cite{TomClay, Jin:1997da} \\
$ 10_{72} $ & 4 or 5 & \cite{TomClay, Jin:1997da} \\
$ 10_{73} $ & 4 or 5 & \cite{TomClay, Jin:1997da} \\
$ 10_{74} $ & 4 or 5 & \cite{TomClay, Jin:1997da} \\
$ 10_{75} $ & 4 or 5 & \cite{TomClay, Jin:1997da} \\
$ 10_{76} $ & 4 or 5 & \autoref{thm:main} \\
$ 10_{77} $ & 4 or 5 & \cite{TomClay, Jin:1997da} \\
$ 10_{78} $ & 4 or 5 & \cite{TomClay, Jin:1997da} \\
$ 10_{79} $ & 4 or 5 & \cite{Jin:1997da, Scharein:1998tu} \\
$ 10_{80} $ & 4, 5, or 6 & \cite{TomClay, Jin:1997da} \\
$ 10_{81} $ & 4 or 5 & \cite{Jin:1997da, Rawdon:2002wj} \\
$ 10_{82} $ & 4 or 5 & \cite{TomClay, Jin:1997da} \\
$ 10_{83} $ & 4 or 5 & \cite{TomClay, Jin:1997da} \\
$ 10_{84} $ & 4 or 5 & \cite{TomClay, Jin:1997da} \\
$ 10_{85} $ & 4 or 5 & \cite{TomClay, Jin:1997da} \\
$ 10_{86} $ & 4 or 5 & \cite{Jin:1997da, Rawdon:2002wj} \\
$ 10_{87} $ & 4 or 5 & \cite{Jin:1997da, Rawdon:2002wj} \\
$ 10_{88} $ & 4 or 5 & \cite{Jin:1997da, Rawdon:2002wj} \\
$ 10_{89} $ & 4 or 5 & \cite{Jin:1997da, Rawdon:2002wj} \\
$ 10_{90} $ & 4 or 5 & \cite{TomClay, Jin:1997da} \\
$ 10_{91} $ & 4 or 5 & \cite{TomClay, Jin:1997da} \\
$ 10_{92} $ & 4 or 5 & \cite{Jin:1997da, Rawdon:2002wj} \\
$ 10_{93} $ & 4 or 5 & \cite{Jin:1997da, Rawdon:2002wj} \\
$ 10_{94} $ & 4 or 5 & \cite{TomClay, Jin:1997da} \\
$ 10_{95} $ & 4 or 5 & \cite{TomClay, Jin:1997da} \\
$ 10_{96} $ & 4 or 5 & \cite{Jin:1997da, Rawdon:2002wj} \\
$ 10_{97} $ & 4 or 5 & \cite{TomClay, Jin:1997da} \\
$ 10_{98} $ & 4 or 5 & \cite{Jin:1997da, Rawdon:2002wj} \\
$ 10_{99} $ & 4 or 5 & \cite{Jin:1997da, Rawdon:2002wj} \\
$ 10_{100} $ & 4 or 5 & \cite{TomClay, Jin:1997da} \\
$ 10_{101} $ & 4 or 5 & \cite{TomClay, Jin:1997da} \\
$ 10_{102} $ & 4 or 5 & \cite{Jin:1997da, Rawdon:2002wj} \\
$ 10_{103} $ & 4 or 5 & \cite{TomClay, Jin:1997da} \\
$ 10_{104} $ & 4 or 5 & \cite{Jin:1997da, Rawdon:2002wj} \\
$ 10_{105} $ & 4 or 5 & \cite{TomClay, Jin:1997da} \\
$ 10_{106} $ & 4 or 5 & \cite{TomClay, Jin:1997da} \\
$ 10_{107} $ & 4 or 5 & \cite{TomClay, Jin:1997da, Scharein:1998tu} \\
$ 10_{108} $ & 4 or 5 & \cite{Jin:1997da, Rawdon:2002wj} \\
$ 10_{109} $ & 4 or 5 & \cite{Jin:1997da, Rawdon:2002wj} \\
$ 10_{110} $ & 4 or 5 & \cite{TomClay, Jin:1997da} \\
$ 10_{111} $ & 4 or 5 & \cite{TomClay, Jin:1997da} \\
$ 10_{112} $ & 4 or 5 & \cite{TomClay, Jin:1997da} \\
$ 10_{113} $ & 4 or 5 & \cite{Jin:1997da, Rawdon:2002wj} \\
$ 10_{114} $ & 4 or 5 & \cite{Jin:1997da, Rawdon:2002wj} \\
$ 10_{115} $ & 4 or 5 & \cite{TomClay, Jin:1997da} \\
$ 10_{116} $ & 4 or 5 & \cite{Jin:1997da, Rawdon:2002wj} \\
$ 10_{117} $ & 4 or 5 & \cite{TomClay, Jin:1997da} \\
$ 10_{118} $ & 4 or 5 & \cite{TomClay, Jin:1997da} \\
$ 10_{119} $ & 4 or 5 & \cite{TomClay, Jin:1997da, Scharein:1998tu} \\
$ 10_{120} $ & 4 or 5 & \cite{Jin:1997da, Rawdon:2002wj} \\
$ 10_{121} $ & 4 or 5 & \cite{Jin:1997da, Rawdon:2002wj} \\
$ 10_{122} $ & 4 or 5 & \cite{Jin:1997da, Rawdon:2002wj} \\
$ 10_{123} $ & 4 or 5 & \cite{Jin:1997da, Rawdon:2002wj} \\
$ 10_{124} $ & 5 & \cite{Kuiper:1987ki} \\
$ 10_{125} $ & 4 or 5 & \cite{Jin:1997da, Rawdon:2002wj} \\
$ 10_{126} $ & 4 or 5 & \cite{TomClay, Jin:1997da} \\
$ 10_{127} $ & 4 or 5 & \cite{Jin:1997da, Rawdon:2002wj} \\
$ 10_{128} $ & 4 or 5 & \cite{Jin:1997da, Rawdon:2002wj} \\
$ 10_{129} $ & 4 or 5 & \cite{Jin:1997da, Rawdon:2002wj} \\
$ 10_{130} $ & 4 or 5 & \cite{Jin:1997da, Rawdon:2002wj} \\
$ 10_{131} $ & 4 or 5 & \cite{TomClay, Jin:1997da} \\
$ 10_{132} $ & 4 or 5 & \cite{Jin:1997da, Rawdon:2002wj} \\
$ 10_{133} $ & 4 or 5 & \cite{TomClay, Jin:1997da} \\
$ 10_{134} $ & 4 or 5 & \cite{Jin:1997da, Rawdon:2002wj} \\
$ 10_{135} $ & 4 or 5 & \cite{Jin:1997da, Rawdon:2002wj} \\
$ 10_{136} $ & 4 or 5 & \cite{Jin:1997da, Rawdon:2002wj} \\
$ 10_{137} $ & 4 or 5 & \cite{TomClay, Jin:1997da} \\
$ 10_{138} $ & 4 or 5 & \cite{TomClay, Jin:1997da} \\
$ 10_{139} $ & 4 or 5 & \cite{Jin:1997da, Rawdon:2002wj} \\
$ 10_{140} $ & 4 or 5 & \cite{Jin:1997da, Rawdon:2002wj} \\
$ 10_{141} $ & 4 or 5 & \cite{Jin:1997da, Rawdon:2002wj} \\
$ 10_{142} $ & 4 or 5 & \cite{TomClay, Jin:1997da} \\
$ 10_{143} $ & 4 or 5 & \cite{TomClay, Jin:1997da} \\
$ 10_{144} $ & 4 or 5 & \cite{Jin:1997da, Rawdon:2002wj} \\
$ 10_{145} $ & 4 or 5 & \cite{Jin:1997da, Rawdon:2002wj} \\
$ 10_{146} $ & 4 or 5 & \cite{Jin:1997da, Rawdon:2002wj} \\
$ 10_{147} $ & 4 or 5 & \cite{TomClay, Jin:1997da, Scharein:1998tu} \\
$ 10_{148} $ & 4 or 5 & \cite{TomClay, Jin:1997da} \\
$ 10_{149} $ & 4 or 5 & \cite{TomClay, Jin:1997da} \\
$ 10_{150} $ & 4 or 5 & \cite{Jin:1997da, Rawdon:2002wj} \\
$ 10_{151} $ & 4 or 5 & \cite{Jin:1997da, Rawdon:2002wj} \\
$ 10_{152} $ & 4 or 5 & \cite{Jin:1997da, Rawdon:2002wj} \\
$ 10_{153} $ & 4 or 5 & \cite{TomClay, Jin:1997da} \\
$ 10_{154} $ & 4 or 5 & \cite{Jin:1997da, Rawdon:2002wj} \\
$ 10_{155} $ & 4 or 5 & \cite{Jin:1997da, Rawdon:2002wj} \\
$ 10_{156} $ & 4 or 5 & \cite{Jin:1997da, Rawdon:2002wj} \\
$ 10_{157} $ & 4 or 5 & \cite{Jin:1997da, Rawdon:2002wj} \\
$ 10_{158} $ & 4 or 5 & \cite{Jin:1997da, Rawdon:2002wj} \\
$ 10_{159} $ & 4 or 5 & \cite{Jin:1997da, Rawdon:2002wj} \\
$ 10_{160} $ & 4 or 5 & \cite{Jin:1997da, Rawdon:2002wj} \\
$ 10_{161} $ & 4 or 5 & \cite{Jin:1997da, Rawdon:2002wj} \\
$ 10_{162} $ & 4 or 5 & \cite{Jin:1997da, Rawdon:2002wj} \\
$ 10_{163} $ & 4 or 5 & \cite{Jin:1997da, Rawdon:2002wj} \\
$ 10_{164} $ & 4 or 5 & \cite{TomClay, Jin:1997da} \\
$ 10_{165} $ & 4 or 5 & \cite{Jin:1997da, Rawdon:2002wj} \\
\end{supertabular*}
\end{multicols*}

\end{center}

\section{Exact Values of Superbridge Index}\label{sec:exact values}

The prime knots through 16 crossings for which the exact value of superbridge index is known.


\begin{center}
\begin{multicols*}{3}
\TrickSupertabularIntoMulticols

\tablefirsthead{
	\multicolumn{1}{l}{$K$} & \multicolumn{2}{l}{$\superbridge[K]$} \\
	\midrule
}
\tablehead{
	\multicolumn{1}{l}{$K$} & \multicolumn{2}{l}{$\superbridge[K]$} \\
	\midrule
}
\tablelasttail{\bottomrule}

\begin{supertabular*}{.26\textwidth}{lll} \label{tab:exact stick numbers}
$ 0_{1} $ & 1 &  \\
$ 3_{1} $ & 3 & \cite{Kuiper:1987ki} \\
$ 4_{1} $ & 3 & \cite{Jin:1997da,Randell:1994bx} \\
$ 5_{1} $ & 4 & \cite{Kuiper:1987ki} \\
$ 7_{1} $ & 4 & \cite{Kuiper:1987ki}\\
$ 7_{5} $ & 4 & \cite{Jin:1997da, Meissen:1998wu}\\
$ 7_{6} $ & 4 & \cite{Jin:1997da, Meissen:1998wu}\\
$ 7_{7} $ & 4 & \cite{Jin:1997da, Meissen:1998wu}\\
$ 8_{1} $ & 4 & \autoref{thm:main} \\
$ 8_{2} $ & 4 & \autoref{thm:main} \\
$ 8_{3} $ & 4 & \autoref{thm:main} \\
$ 8_{5} $ & 4 & \autoref{thm:main} \\
$ 8_{6} $ & 4 & \autoref{thm:main} \\
$ 8_{7} $ & 4 & \autoref{thm:main} \\
$ 8_{8} $ & 4 & \autoref{thm:main} \\
$ 8_{10} $ & 4 & \autoref{thm:main} \\
$ 8_{11} $ & 4 & \autoref{thm:main} \\
$ 8_{12} $ & 4 & \autoref{thm:main} \\
$ 8_{13} $ & 4 & \autoref{thm:main} \\
$ 8_{14} $ & 4 & \autoref{thm:main} \\
$ 8_{15} $ & 4 & \autoref{thm:main} \\
$ 8_{16} $ & 4 & \cite{Jin:1997da, Rawdon:2002wj} \\
$ 8_{17} $ & 4 & \cite{Jin:1997da, Rawdon:2002wj} \\
$ 8_{18} $ & 4 &  \cite{Calvo:2001gv, Jin:1997da, Rawdon:2002wj} \\
$ 8_{19} $ & 4 & \cite{Kuiper:1987ki} \\
$ 8_{20} $ & 4 & \cite{Jin:1997da, Negami:1991gb, Meissen:1998wu} \\
$ 8_{21} $ & 4 & \cite{Jin:1997da, Meissen:1998wu}\\
$ 9_{1} $ & 4 & \cite{Kuiper:1987ki} \\
$ 9_{7} $ & 4 & \autoref{thm:main} \\
$ 9_{16} $ & 4 & \autoref{thm:main} \\
$ 9_{20} $ & 4 & \autoref{thm:main} \\
$ 9_{26} $ & 4 & \autoref{thm:main} \\
$ 9_{28} $ & 4 & \autoref{thm:main} \\
$ 9_{29} $ & 4 & \cite{Jin:1997da, Scharein:1998tu} \\
$ 9_{32} $ & 4 & \autoref{thm:main} \\
$ 9_{33} $ & 4 & \autoref{thm:main} \\
$ 9_{34} $ & 4 & \cite{Jin:1997da, Rawdon:2002wj} \\
$ 9_{35} $ & 4 & \cite{TomClay, Jin:1997da} \\
$ 9_{39} $ & 4 & \cite{TomClay, Jin:1997da} \\
$ 9_{40} $ & 4 & \cite{Jin:1997da} \\
$ 9_{41} $ & 4 & \cite{Jin:1997da} \\
$ 9_{42} $ & 4 & \cite{Jin:1997da} \\
$ 9_{43} $ & 4 & \cite{TomClay, Jin:1997da} \\
$ 9_{44} $ & 4 & \cite{Jin:1997da, Rawdon:2002wj} \\
$ 9_{45} $ & 4 & \cite{TomClay, Jin:1997da} \\
$ 9_{46} $ & 4 & \cite{Jin:1997da} \\
$ 9_{47} $ & 4 & \cite{Jin:1997da, Rawdon:2002wj} \\
$ 9_{48} $ & 4 & \cite{TomClay, Jin:1997da} \\
$ 9_{49} $ & 4 & \cite{Jin:1997da, Rawdon:2002wj} \\
$ 10_{124} $ & 5 & \cite{Kuiper:1987ki} \\
$ 11a_{367} $ & 4 & \cite{Kuiper:1987ki} \\
$ 11n_{71} $ & 5 & \cite{TomClay, Jin:1997da} \\
$ 11n_{73} $ & 5 & \cite{TomClay, Jin:1997da} \\
$ 11n_{74} $ & 5 & \cite{TomClay, Jin:1997da} \\
$ 11n_{75} $ & 5 & \cite{TomClay, Jin:1997da} \\
$ 11n_{76} $ & 5 & \cite{TomClay, Jin:1997da} \\
$ 11n_{78} $ & 5 & \cite{TomClay, Jin:1997da} \\
$ 11n_{81} $ & 5 & \cite{TomClay, Jin:1997da} \\
$ 13a_{4878} $ & 4 & \cite{Kuiper:1987ki} \\
$ 13n_{226} $ & 5 & \autoref{thm:main} \\
$ 13n_{285} $ & 5 & \cite{Blair:2020kv, Jin:1997da} \\
$ 13n_{293} $ & 5 & \cite{Blair:2020kv, Jin:1997da} \\
$ 13n_{328} $ & 5 & \autoref{thm:main} \\
$ 13n_{342} $ & 5 & \autoref{thm:main} \\
$ 13n_{343} $ & 5 & \autoref{thm:main} \\
$ 13n_{350} $ & 5 & \autoref{thm:main} \\
$ 13n_{512} $ & 5 & \autoref{thm:main} \\
$ 13n_{587} $ & 5 & \cite{Blair:2020kv, Jin:1997da} \\
$ 13n_{592} $ & 5 & \cite{Blair:2020kv, Jin:1997da} \\
$ 13n_{607} $ & 5 & \cite{Blair:2020kv, Jin:1997da} \\
$ 13n_{611} $ & 5 & \cite{Blair:2020kv, Jin:1997da} \\
$ 13n_{835} $ & 5 & \cite{Blair:2020kv, Jin:1997da} \\
$ 13n_{973} $ & 5 & \autoref{thm:main} \\
$ 13n_{1177} $ & 5 & \cite{Blair:2020kv, Jin:1997da} \\
$ 13n_{1192} $ & 5 & \cite{Blair:2020kv, Jin:1997da} \\
$ 13n_{2641} $ & 5 & \autoref{thm:main} \\
$ 13n_{5018} $ & 5 & \autoref{thm:main} \\
$ 14n_{1753} $ & 5 & \autoref{thm:main} \\
$ 14n_{21881}$ & 6 & \cite{Kuiper:1987ki} \\
$ 15a_{85263} $ & 4 & \cite{Kuiper:1987ki} \\
$ 15n_{41126} $ & 5 & \cite{Blair:2020kv, Jin:1997da} \\
$ 15n_{41127} $ & 5 & \cite{Blair:2020kv, Jin:1997da} \\
$ 15n_{41185} $ & 5 & \cite{Kuiper:1987ki} \\
$ 16n_{783154} $ & 6 & \cite{Kuiper:1987ki} \\

\end{supertabular*}
\end{multicols*}

\end{center}

\section{Knot Images and Coordinates} 
\label{sec:coords}

Images of and vertex coordinates for each of the knots mentioned in \Cref{thm:main}. Each knot is shown in orthographic perspective from the direction of the positive $z$-axis. The first three columns to the right of the image are the coordinates of the vertices, which have been normalized so that the first vertex is at the origin, the second vertex is on the positive $x$-axis, and the third vertex lies in the $xy$-plane with positive $y$-coordinate. The last column gives the entries in the vector $\vec{u}$ satisfying the equation from \Cref{cor:gordan bound}.

The original floating-point coordinates can be downloaded from the {\tt stick-knot-gen} project~\cite{stick-knot-gen}.

\begin{center}
\begin{tabular*}{0.85\textwidth}{C{2.2in} R{.4in} R{.4in} R{.4in} | R{1in}}
\multicolumn{5}{c}{$8_{1}$} \\
\cline{1-5}\noalign{\smallskip}
\multirow{10}{*}{\includegraphics[height=1.8in]{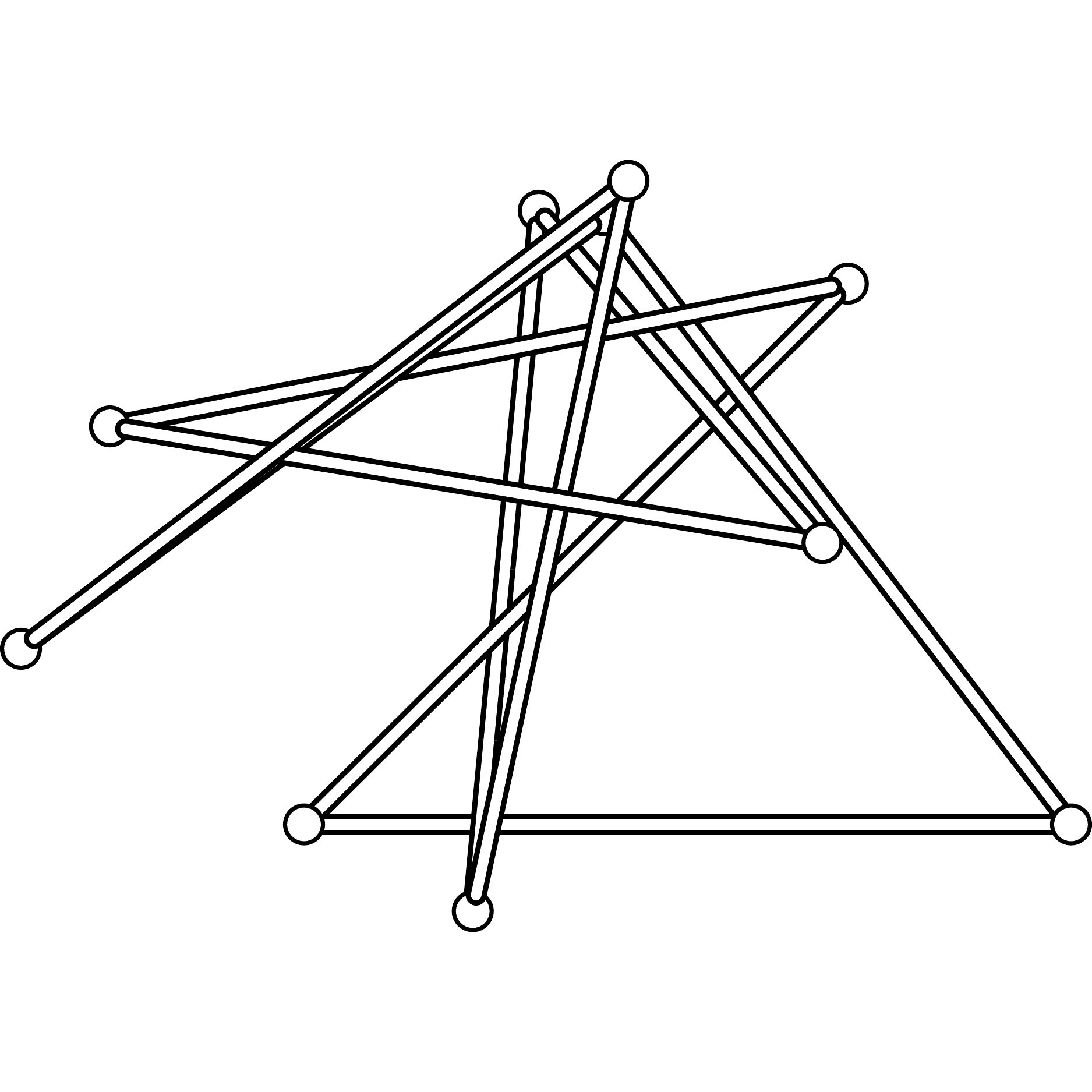}} & $0$ & $0$ & $0$ & $1$ \\
& $1000$ & $0$ & $0$ & $1$ \\
& $392$ & $794$ & $0$ & $1$ \\
& $-369$ & $229$ & $319$ & $2864186130$ \\
& $423$ & $839$ & $330$ & $1$ \\
& $220$ & $-113$ & $101$ & $5$ \\
& $306$ & $800$ & $-299$ & $48024948$ \\
& $676$ & $367$ & $523$ & $236163582$ \\
& $-253$ & $519$ & $185$ & $280265709$ \\
& $709$ & $705$ & $-13$ & $2484703250$ \\
\end{tabular*}

\medskip

\begin{tabular*}{0.85\textwidth}{C{2.2in} R{.4in} R{.4in} R{.4in} | R{1in}}
\multicolumn{5}{c}{$8_{2}$} \\
\cline{1-5}\noalign{\smallskip}
\multirow{10}{*}{\includegraphics[height=1.8in]{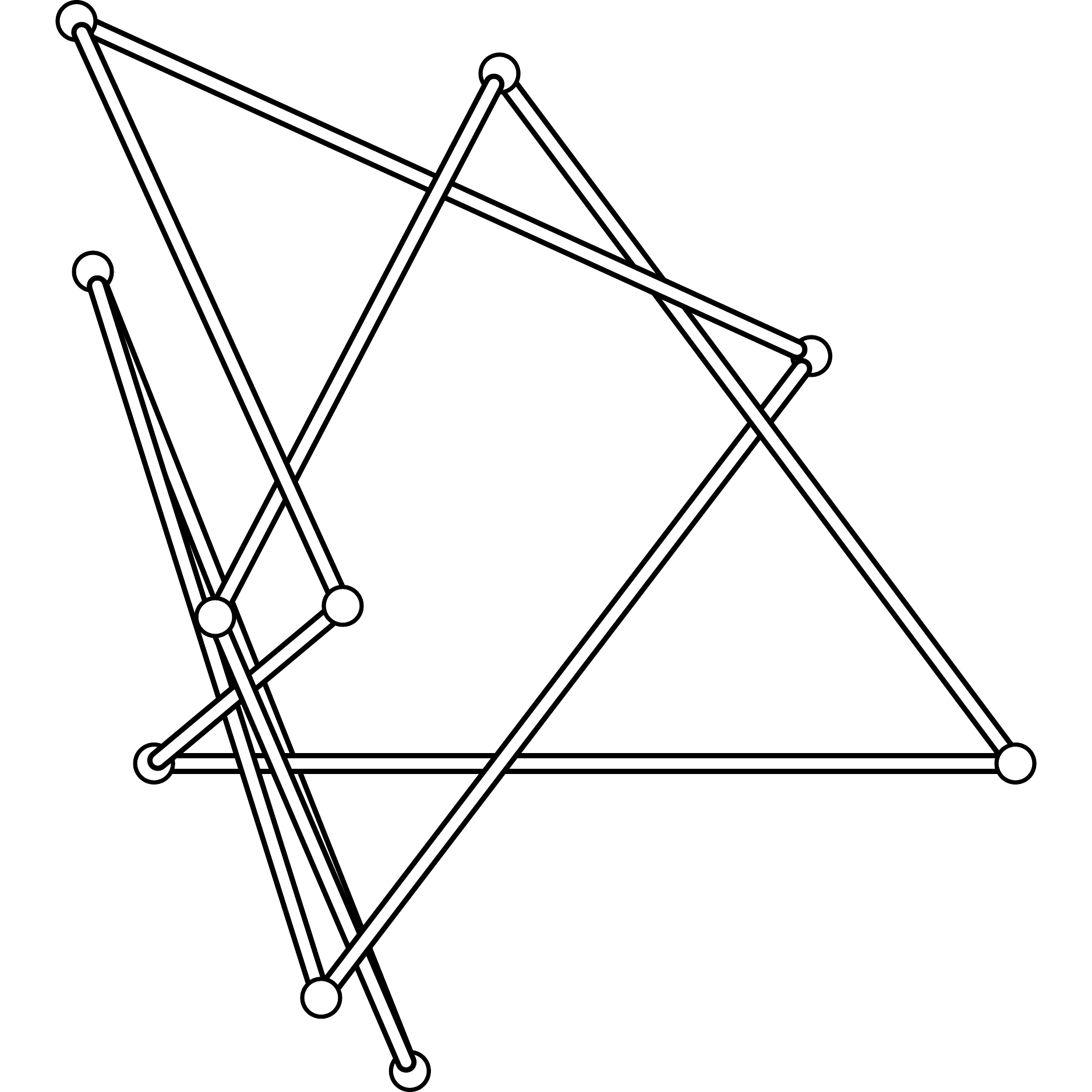}} & $0$ & $0$ & $0$ & $1$ \\
& $1000$ & $0$ & $0$ & $1$ \\
& $401$ & $801$ & $0$ & $27676293106$ \\
& $71$ & $170$ & $702$ & $2$ \\
& $297$ & $-357$ & $-117$ & $1$ \\
& $-71$ & $571$ & $-174$ & $29466942625$ \\
& $194$ & $-272$ & $294$ & $359004357$ \\
& $763$ & $473$ & $-56$ & $19211515592$ \\
& $-90$ & $862$ & $293$ & $494790273$ \\
& $219$ & $183$ & $958$ & $901050440$ \\
\end{tabular*}

\medskip

\begin{tabular*}{0.85\textwidth}{C{2.2in} R{.4in} R{.4in} R{.4in} | R{1in}}
\multicolumn{5}{c}{$8_{3}$} \\
\cline{1-5}\noalign{\smallskip}
\multirow{10}{*}{\includegraphics[height=1.8in]{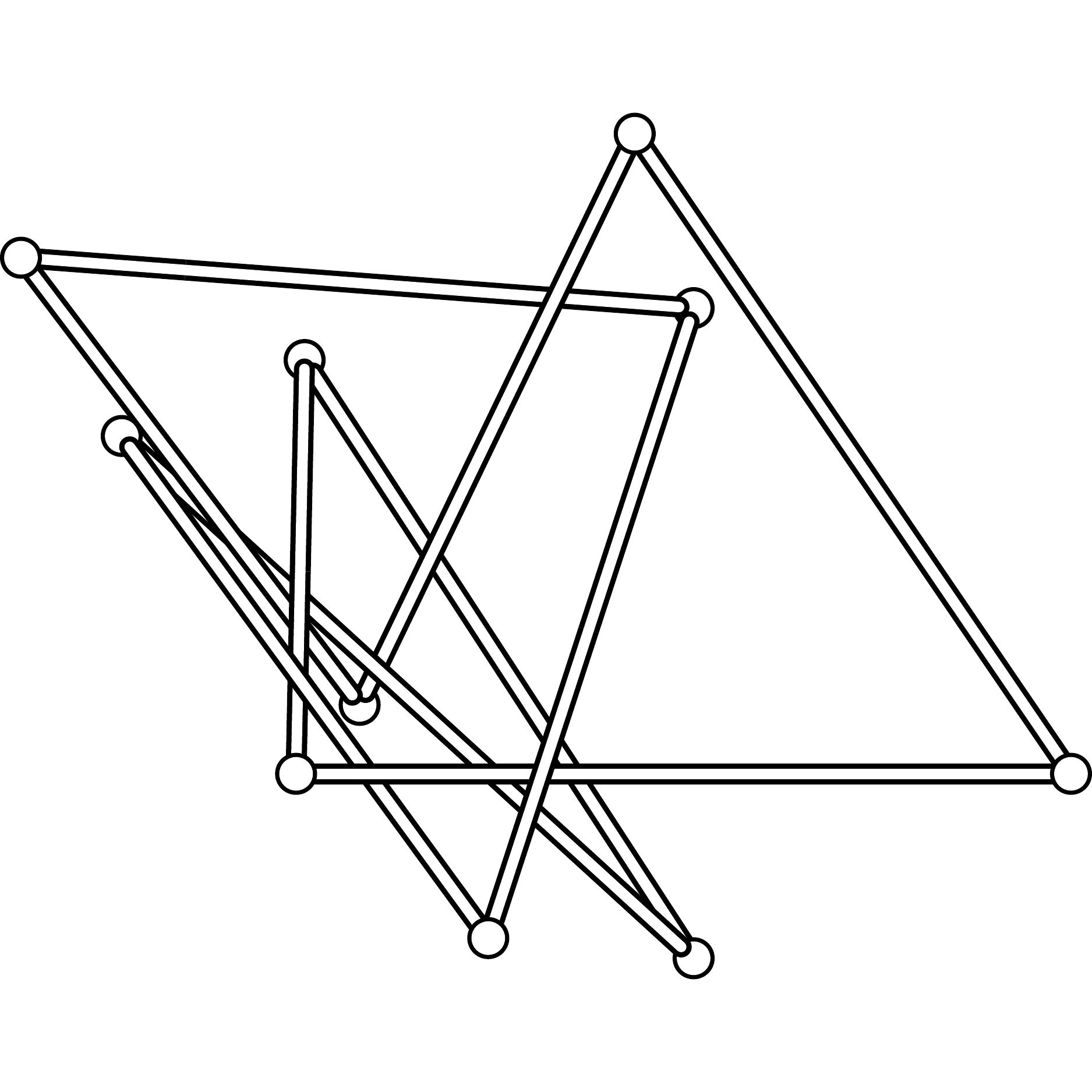}} & $0$ & $0$ & $0$ & $11523814716$ \\
& $1000$ & $0$ & $0$ & $33922449104$ \\
& $437$ & $826$ & $0$ & $1$ \\
& $82$ & $89$ & $-575$ & $1$ \\
& $-355$ & $666$ & $115$ & $1$ \\
& $513$ & $601$ & $-376$ & $1$ \\
& $248$ & $-212$ & $142$ & $9680960$ \\
& $-225$ & $436$ & $-454$ & $41470509758$ \\
& $513$ & $-238$ & $-456$ & $26311260$ \\
& $11$ & $534$ & $-846$ & $79089567$ \\
\end{tabular*}

\medskip

\begin{tabular*}{0.85\textwidth}{C{2.2in} R{.4in} R{.4in} R{.4in} | R{1in}}
\multicolumn{5}{c}{$8_{4}$} \\
\cline{1-5}\noalign{\smallskip}
\multirow{10}{*}{\includegraphics[height=1.8in]{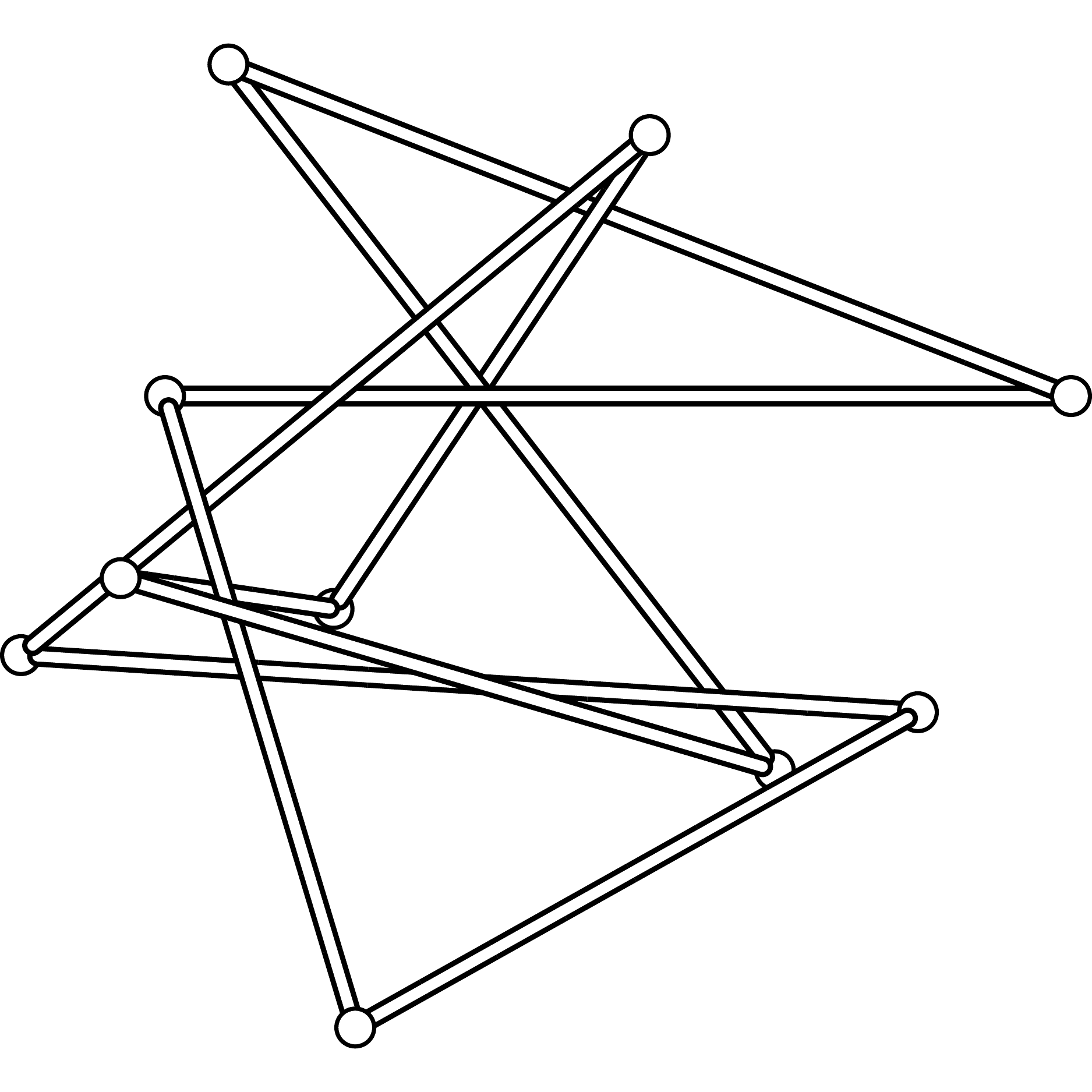}} & $0$ & $0$ & $0$ & $82224356775$ \\
& $1000$ & $0$ & $0$ & $1$ \\
& $70$ & $366$ & $0$ & $1$ \\
& $673$ & $-413$ & $-168$ & $1$ \\
& $-49$ & $-201$ & $491$ & $1$ \\
& $186$ & $-235$ & $-481$ & $1$ \\
& $535$ & $288$ & $297$ & $12935105$ \\
& $-159$ & $-286$ & $-137$ & $75530285415$ \\
& $831$ & $-349$ & $-17$ & $12222683491$ \\
& $210$ & $-697$ & $685$ & $713758069$ \\
\end{tabular*}

\medskip

\begin{tabular*}{0.85\textwidth}{C{2.2in} R{.4in} R{.4in} R{.4in} | R{1in}}
\multicolumn{5}{c}{$8_{5}$} \\
\cline{1-5}\noalign{\smallskip}
\multirow{10}{*}{\includegraphics[height=1.8in]{8_5.pdf}} & $0$ & $0$ & $0$ & $1$ \\
& $1000$ & $0$ & $0$ & $1$ \\
& $155$ & $535$ & $0$ & $8061667015$ \\
& $57$ & $-456$ & $94$ & $1$ \\
& $572$ & $183$ & $-478$ & $1$ \\
& $842$ & $108$ & $482$ & $1$ \\
& $-104$ & $233$ & $181$ & $496072961$ \\
& $781$ & $398$ & $-254$ & $2237736971$ \\
& $482$ & $-67$ & $579$ & $3514960071$ \\
& $182$ & $877$ & $444$ & $4046282755$ \\
\end{tabular*}

\medskip

\begin{tabular*}{0.85\textwidth}{C{2.2in} R{.4in} R{.4in} R{.4in} | R{1in}}
\multicolumn{5}{c}{$8_{6}$} \\
\cline{1-5}\noalign{\smallskip}
\multirow{10}{*}{\includegraphics[height=1.8in]{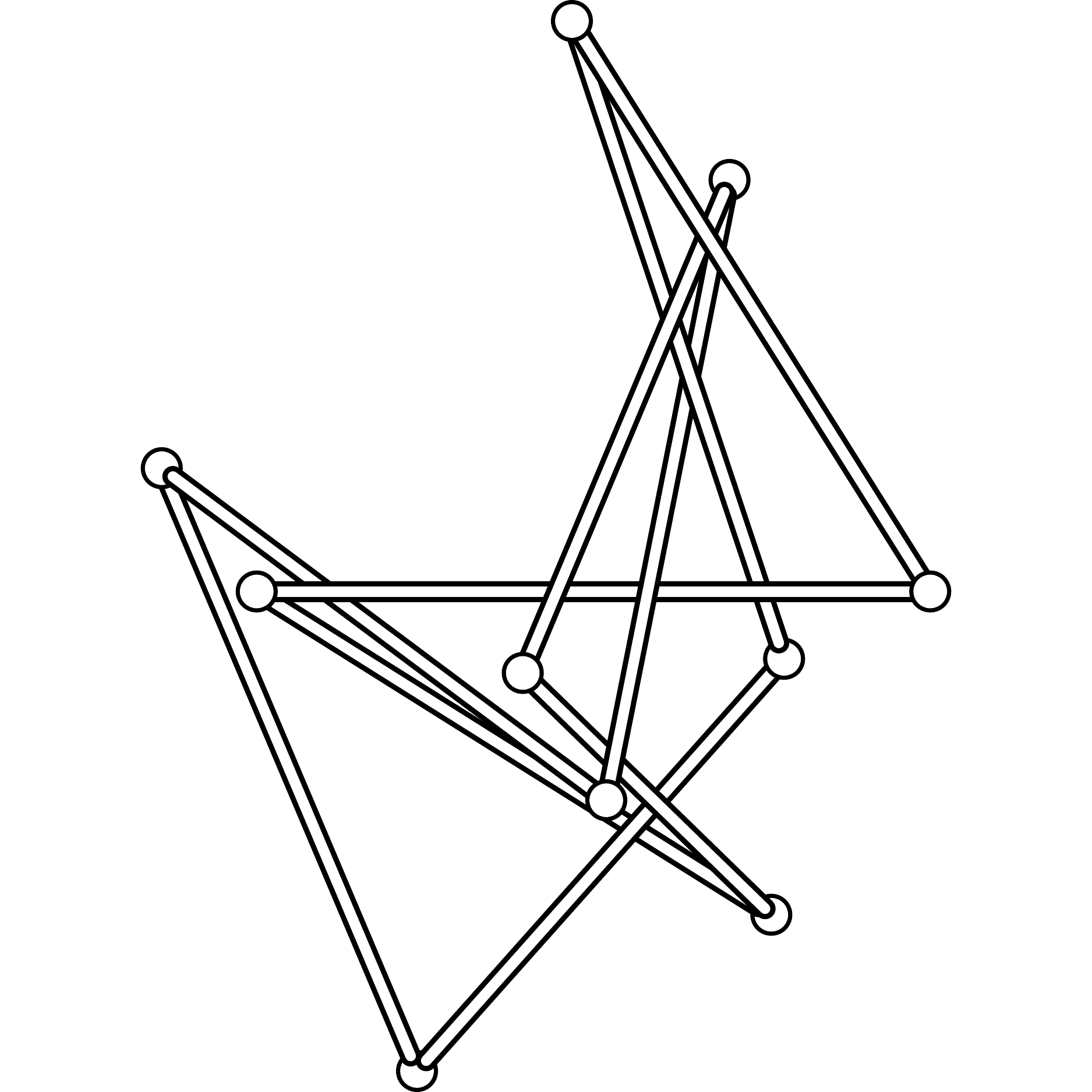}} & $0$ & $0$ & $0$ & $1$ \\
& $1000$ & $0$ & $0$ & $1$ \\
& $468$ & $847$ & $0$ & $13996103539$ \\
& $783$ & $-100$ & $-68$ & $1$ \\
& $238$ & $-712$ & $-640$ & $12357524765$ \\
& $-141$ & $183$ & $-406$ & $744803262$ \\
& $519$ & $-310$ & $161$ & $1157872283$ \\
& $702$ & $611$ & $-183$ & $1254460956$ \\
& $395$ & $-121$ & $425$ & $402371992$ \\
& $764$ & $-480$ & $-432$ & $27243476$ \\
\end{tabular*}

\medskip

\begin{tabular*}{0.85\textwidth}{C{2.2in} R{.4in} R{.4in} R{.4in} | R{1in}}
\multicolumn{5}{c}{$8_{7}$} \\
\cline{1-5}\noalign{\smallskip}
\multirow{10}{*}{\includegraphics[height=1.8in]{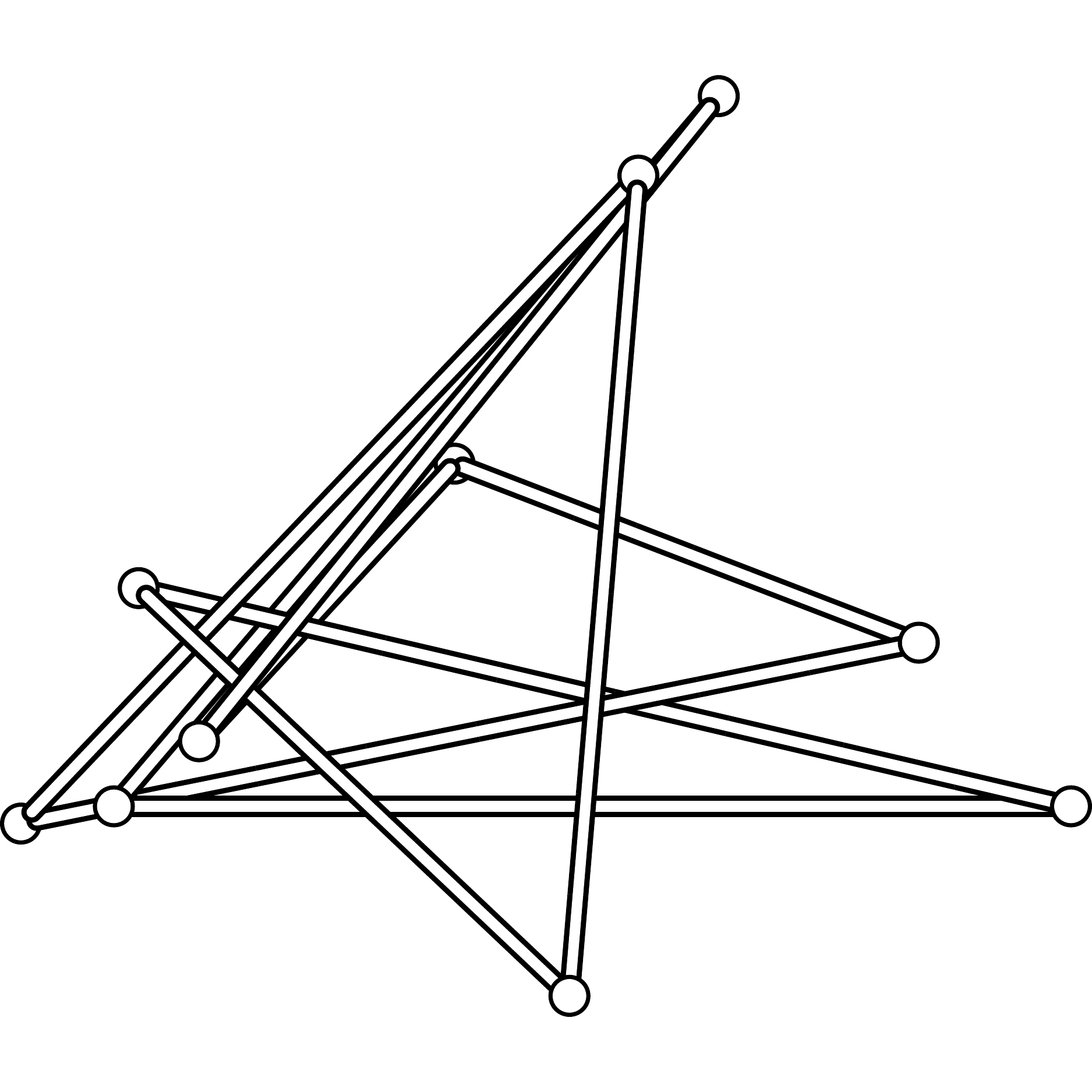}} & $0$ & $0$ & $0$ & $1$ \\
& $1000$ & $0$ & $0$ & $1$ \\
& $26$ & $228$ & $0$ & $993497797$ \\
& $476$ & $-198$ & $785$ & $1$ \\
& $548$ & $659$ & $276$ & $3$ \\
& $-97$ & $-18$ & $-77$ & $1004972499$ \\
& $841$ & $171$ & $212$ & $16597981$ \\
& $356$ & $358$ & $-642$ & $135013158$ \\
& $89$ & $68$ & $277$ & $601139748$ \\
& $632$ & $742$ & $-224$ & $223374715$ \\
\end{tabular*}

\medskip

\begin{tabular*}{0.85\textwidth}{C{2.2in} R{.4in} R{.4in} R{.4in} | R{1in}}
\multicolumn{5}{c}{$8_{8}$} \\
\cline{1-5}\noalign{\smallskip}
\multirow{10}{*}{\includegraphics[height=1.8in]{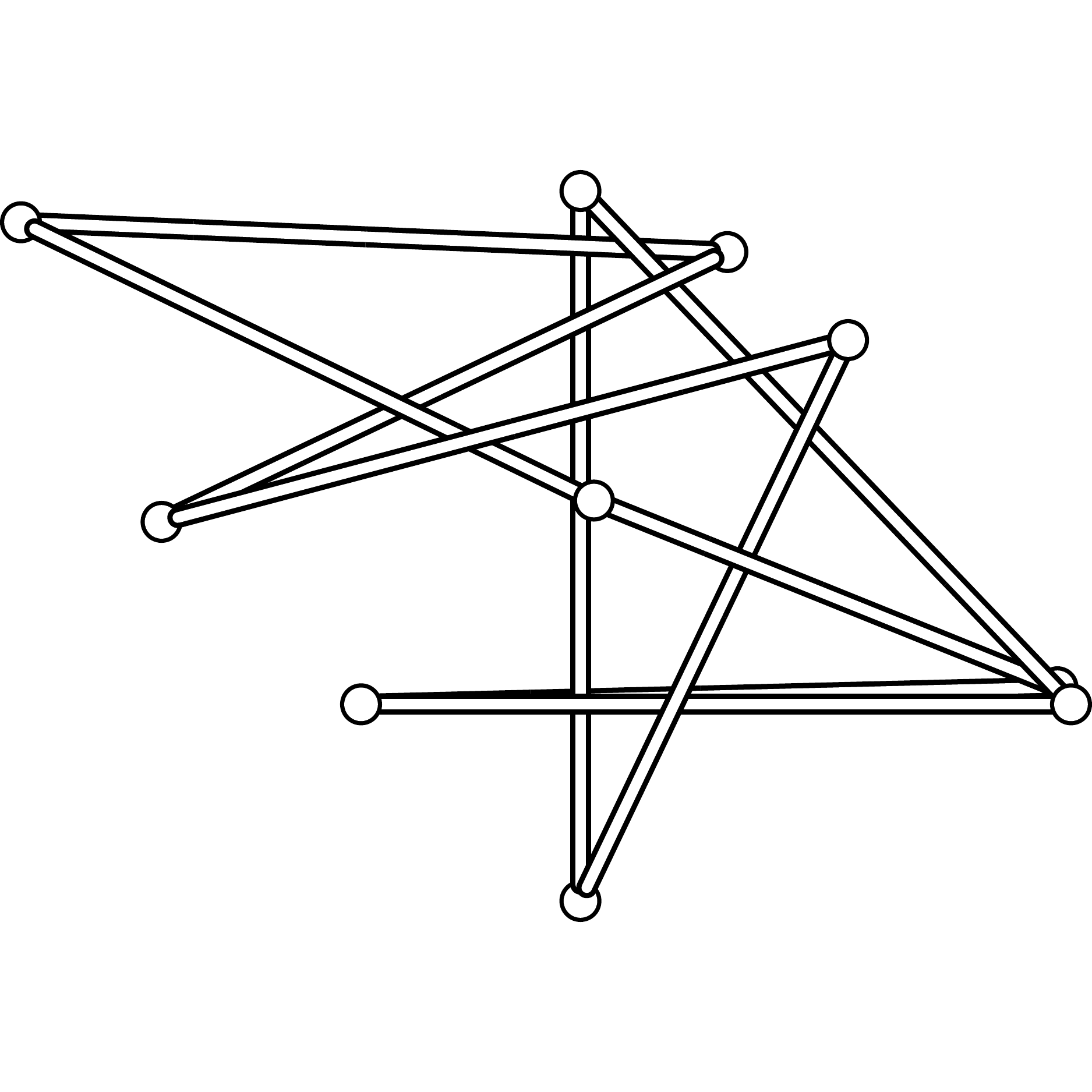}} & $0$ & $0$ & $0$ & $1261677133211$ \\
& $10000$ & $0$ & $0$ & $1$ \\
& $3086$ & $7225$ & $0$ & $130835438045$ \\
& $3093$ & $-2773$ & $-206$ & $1$ \\
& $6857$ & $5129$ & $4632$ & $1$ \\
& $-2809$ & $2567$ & $4514$ & $1$ \\
& $5157$ & $6374$ & $-183$ & $1430647762564$ \\
& $-4789$ & $6789$ & $767$ & $166227086780$ \\
& $3281$ & $2871$ & $5184$ & $3491957024$ \\
& $9811$ & $242$ & $-1919$ & $298663293981$ \\
\end{tabular*}

\medskip

\begin{tabular*}{0.85\textwidth}{C{2.2in} R{.4in} R{.4in} R{.4in} | R{1in}}
\multicolumn{5}{c}{$8_{9}$} \\
\cline{1-5}\noalign{\smallskip}
\multirow{10}{*}{\includegraphics[height=1.8in]{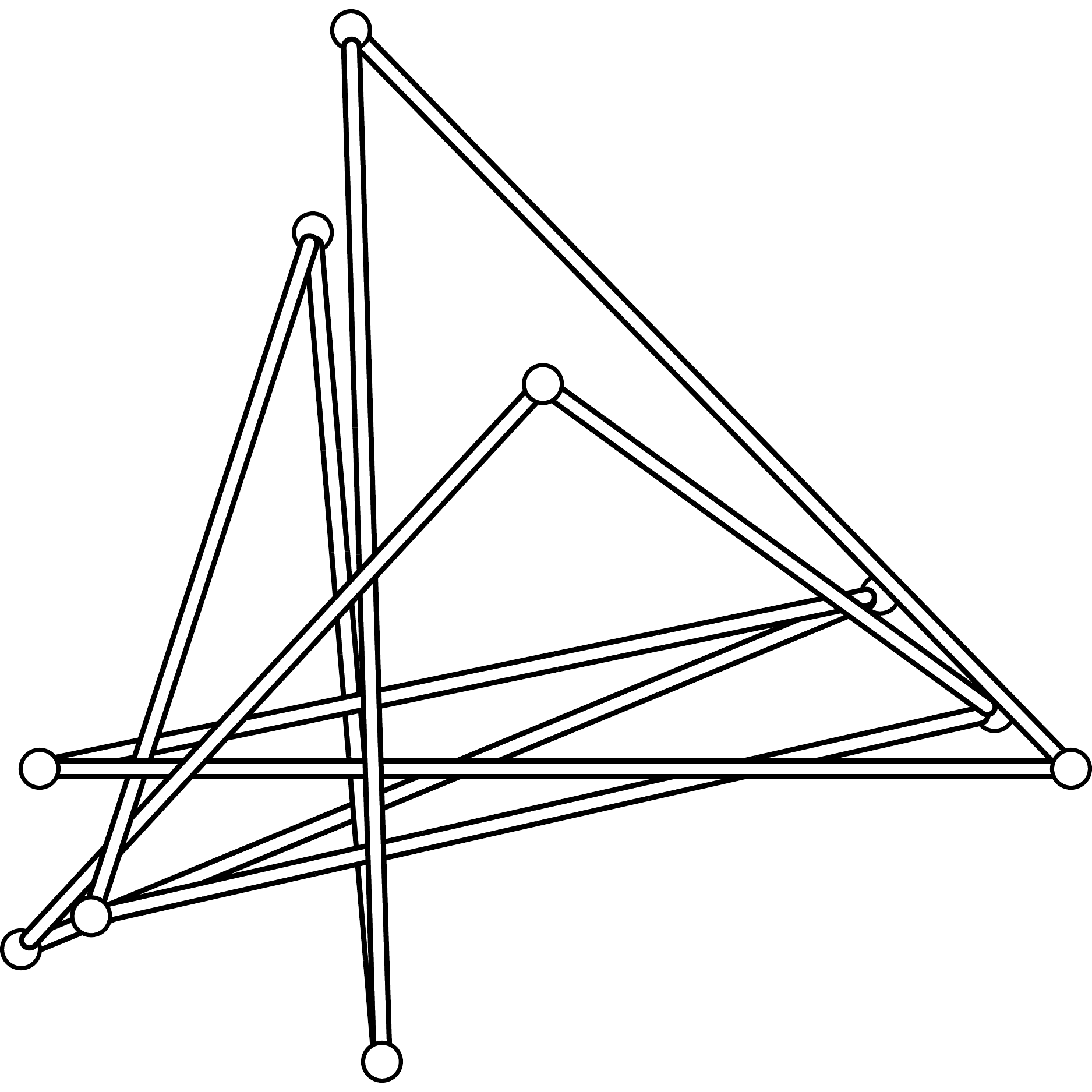}} & $0$ & $0$ & $0$ & $1$ \\
& $1000$ & $0$ & $0$ & $1$ \\
& $302$ & $716$ & $0$ & $1$ \\
& $332$ & $-284$ & $19$ & $1$ \\
& $265$ & $520$ & $-573$ & $12525679$ \\
& $50$ & $-143$ & $144$ & $129465454$ \\
& $926$ & $55$ & $-295$ & $7720205$ \\
& $488$ & $373$ & $546$ & $5545652$ \\
& $-18$ & $-175$ & $-121$ & $24812819$ \\
& $815$ & $169$ & $-554$ & $117804943$ \\
\end{tabular*}

\medskip

\begin{tabular*}{0.85\textwidth}{C{2.2in} R{.4in} R{.4in} R{.4in} | R{1in}}
\multicolumn{5}{c}{$8_{10}$} \\
\cline{1-5}\noalign{\smallskip}
\multirow{10}{*}{\includegraphics[height=1.8in]{8_10.pdf}} & $0$ & $0$ & $0$ & $1$ \\
& $1000$ & $0$ & $0$ & $1$ \\
& $262$ & $674$ & $0$ & $1$ \\
& $433$ & $-130$ & $569$ & $1$ \\
& $-197$ & $506$ & $123$ & $1$ \\
& $316$ & $-334$ & $-54$ & $5014949497$ \\
& $374$ & $225$ & $773$ & $250995749$ \\
& $41$ & $-525$ & $203$ & $370150061$ \\
& $670$ & $152$ & $-179$ & $2517248393$ \\
& $405$ & $640$ & $653$ & $3146528061$ \\
\end{tabular*}

\medskip

\begin{tabular*}{0.85\textwidth}{C{2.2in} R{.4in} R{.4in} R{.4in} | R{1in}}
\multicolumn{5}{c}{$8_{11}$} \\
\cline{1-5}\noalign{\smallskip}
\multirow{10}{*}{\includegraphics[height=1.8in]{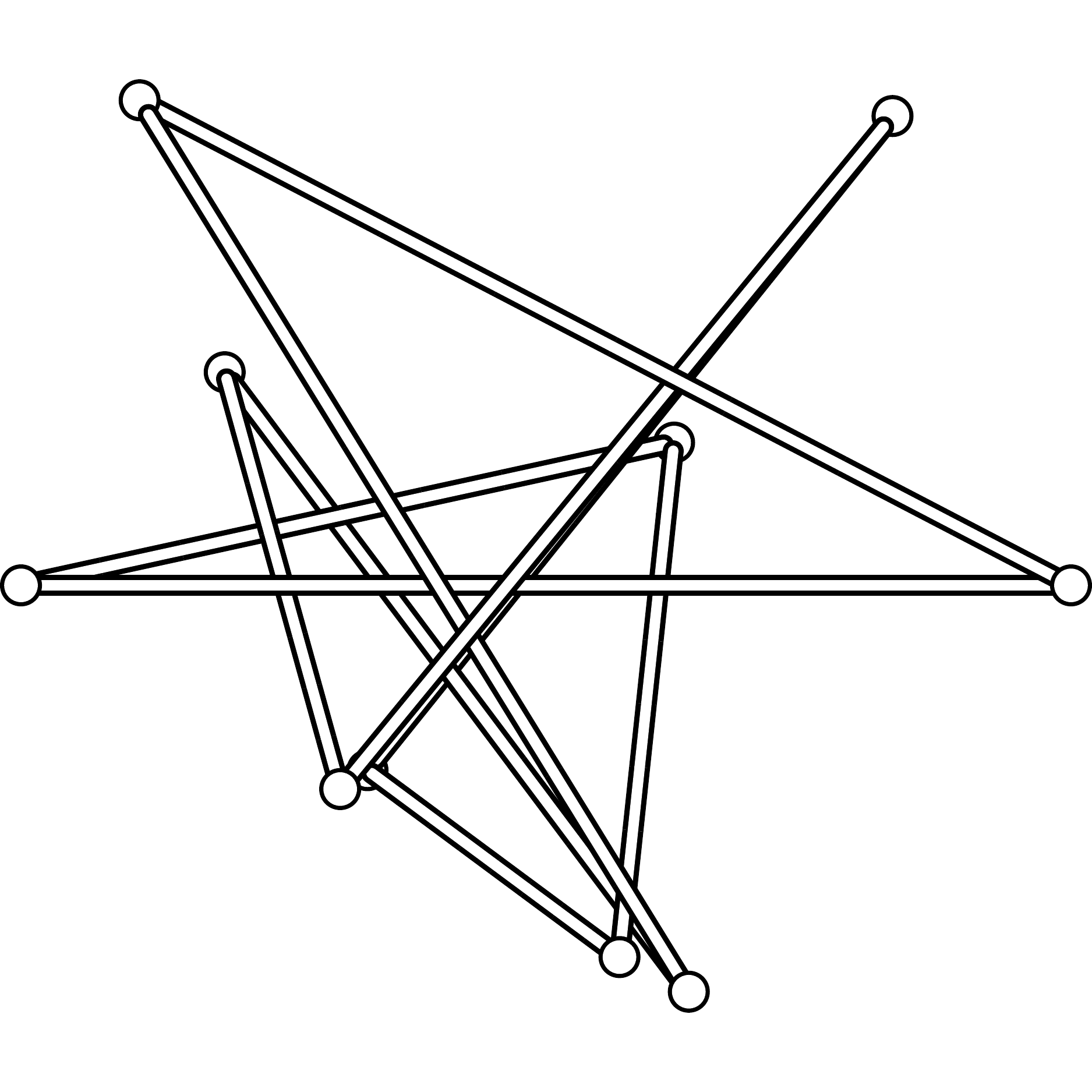}} & $0$ & $0$ & $0$ & $1$ \\
& $1000$ & $0$ & $0$ & $1$ \\
& $113$ & $462$ & $0$ & $1$ \\
& $636$ & $-387$ & $71$ & $1$ \\
& $194$ & $203$ & $-605$ & $29374052043$ \\
& $304$ & $-194$ & $307$ & $9395661114$ \\
& $830$ & $447$ & $-253$ & $53279952$ \\
& $330$ & $-176$ & $-855$ & $793938322$ \\
& $570$ & $-354$ & $99$ & $35840609905$ \\
& $622$ & $136$ & $-771$ & $103610091$ \\
\end{tabular*}

\medskip

\begin{tabular*}{0.85\textwidth}{C{2.2in} R{.4in} R{.4in} R{.4in} | R{1in}}
\multicolumn{5}{c}{$8_{12}$} \\
\cline{1-5}\noalign{\smallskip}
\multirow{10}{*}{\includegraphics[height=1.8in]{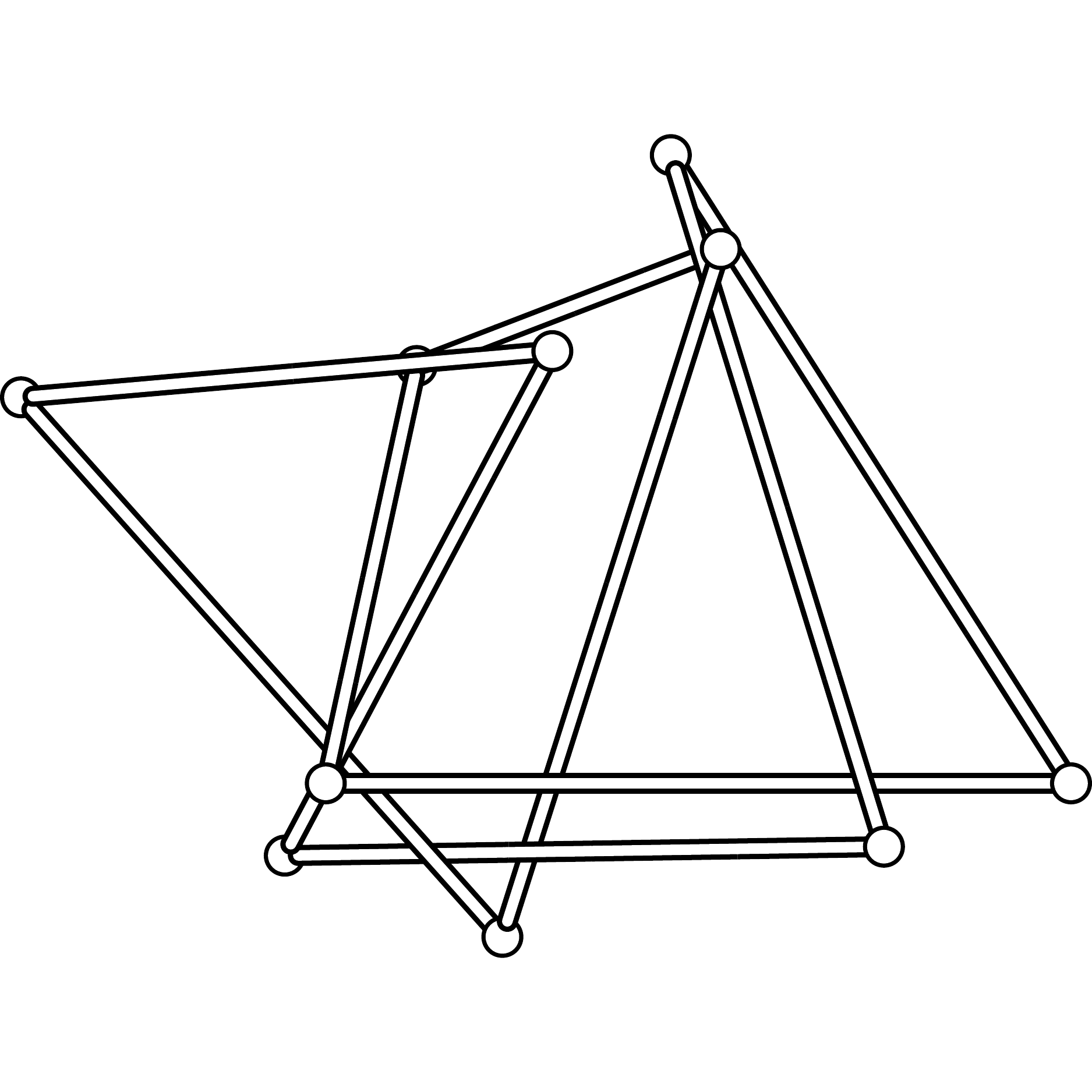}} & $0$ & $0$ & $0$ & $1$ \\
& $1000$ & $0$ & $0$ & $1$ \\
& $463$ & $843$ & $0$ & $1$ \\
& $749$ & $-85$ & $237$ & $1$ \\
& $-55$ & $-97$ & $-358$ & $14648534110$ \\
& $304$ & $580$ & $284$ & $1$ \\
& $-409$ & $518$ & $-414$ & $272953947$ \\
& $237$ & $-206$ & $-171$ & $10328384040$ \\
& $530$ & $717$ & $80$ & $6335908245$ \\
& $122$ & $560$ & $-819$ & $1443566304$ \\
\end{tabular*}

\medskip

\begin{tabular*}{0.85\textwidth}{C{2.2in} R{.4in} R{.4in} R{.4in} | R{1in}}
\multicolumn{5}{c}{$8_{13}$} \\
\cline{1-5}\noalign{\smallskip}
\multirow{10}{*}{\includegraphics[height=1.8in]{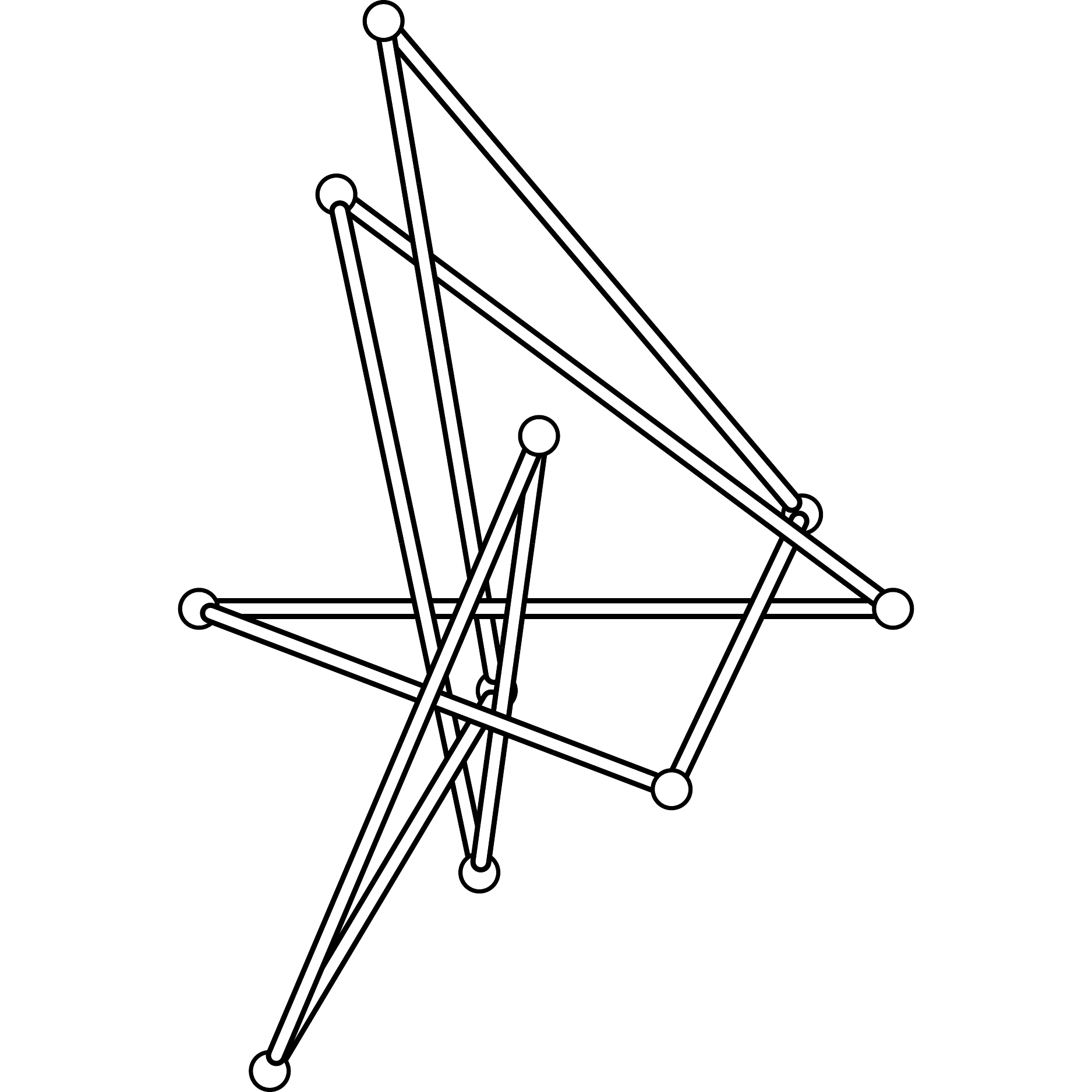}} & $0$ & $0$ & $0$ & $1$ \\
& $1000$ & $0$ & $0$ & $1$ \\
& $198$ & $597$ & $0$ & $1$ \\
& $404$ & $-380$ & $51$ & $560401500$ \\
& $490$ & $249$ & $823$ & $1$ \\
& $102$ & $-666$ & $715$ & $7$ \\
& $429$ & $-118$ & $-55$ & $265466226$ \\
& $266$ & $847$ & $147$ & $283945161$ \\
& $869$ & $136$ & $-214$ & $11280336$ \\
& $681$ & $-260$ & $685$ & $388847518$ \\
\end{tabular*}

\medskip

\begin{tabular*}{0.85\textwidth}{C{2.2in} R{.4in} R{.4in} R{.4in} | R{1in}}
\multicolumn{5}{c}{$8_{14}$} \\
\cline{1-5}\noalign{\smallskip}
\multirow{10}{*}{\includegraphics[height=1.8in]{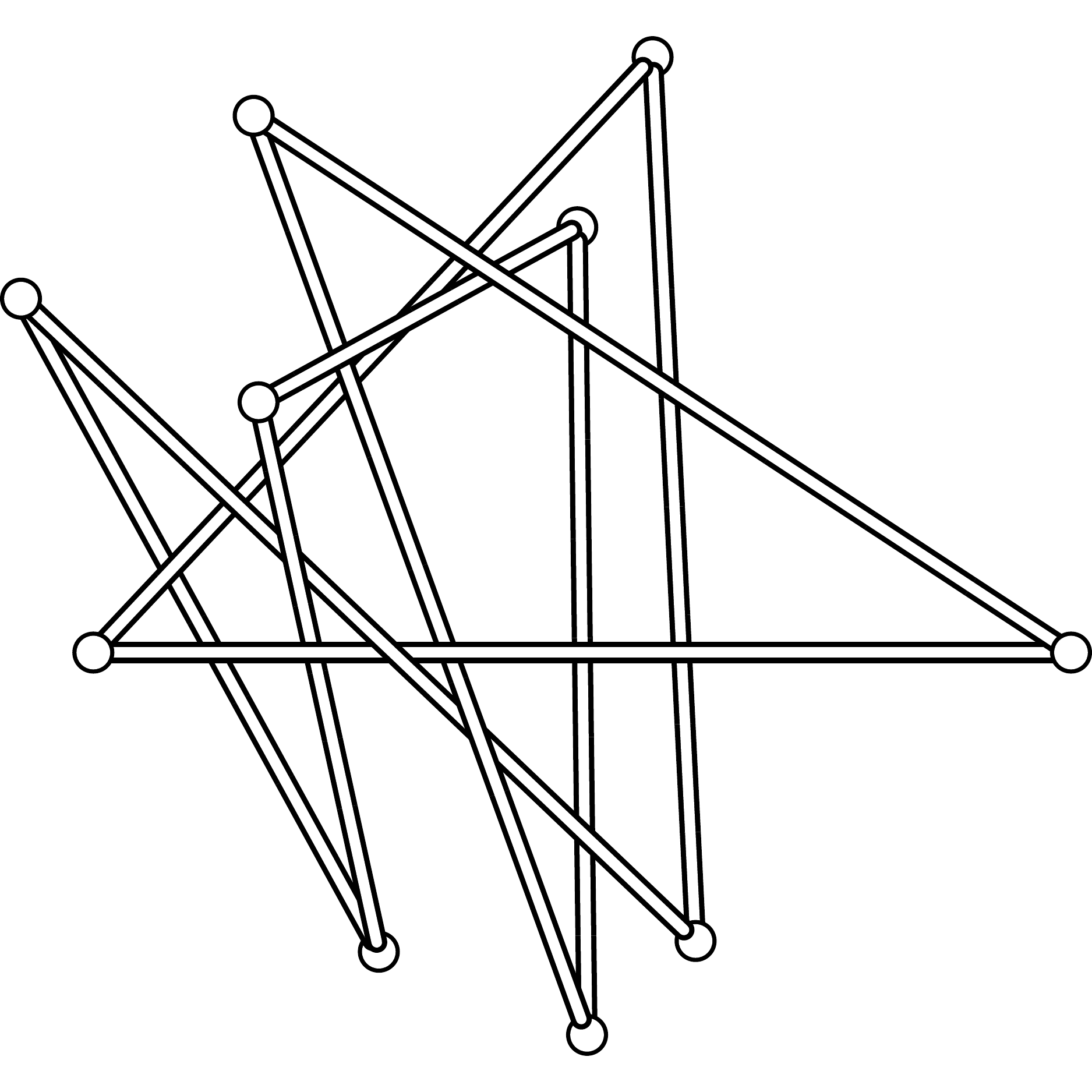}} & $0$ & $0$ & $0$ & $1$ \\
& $1000$ & $0$ & $0$ & $1$ \\
& $164$ & $549$ & $0$ & $22986044$ \\
& $505$ & $-391$ & $-14$ & $1$ \\
& $495$ & $435$ & $-577$ & $1$ \\
& $169$ & $256$ & $351$ & $112$ \\
& $292$ & $-306$ & $-467$ & $2101619$ \\
& $-74$ & $362$ & $181$ & $17826677$ \\
& $616$ & $-295$ & $-123$ & $3072051$ \\
& $572$ & $609$ & $-549$ & $9382065$ \\
\end{tabular*}

\medskip

\begin{tabular*}{0.85\textwidth}{C{2.2in} R{.4in} R{.4in} R{.4in} | R{1in}}
\multicolumn{5}{c}{$8_{15}$} \\
\cline{1-5}\noalign{\smallskip}
\multirow{10}{*}{\includegraphics[height=1.8in]{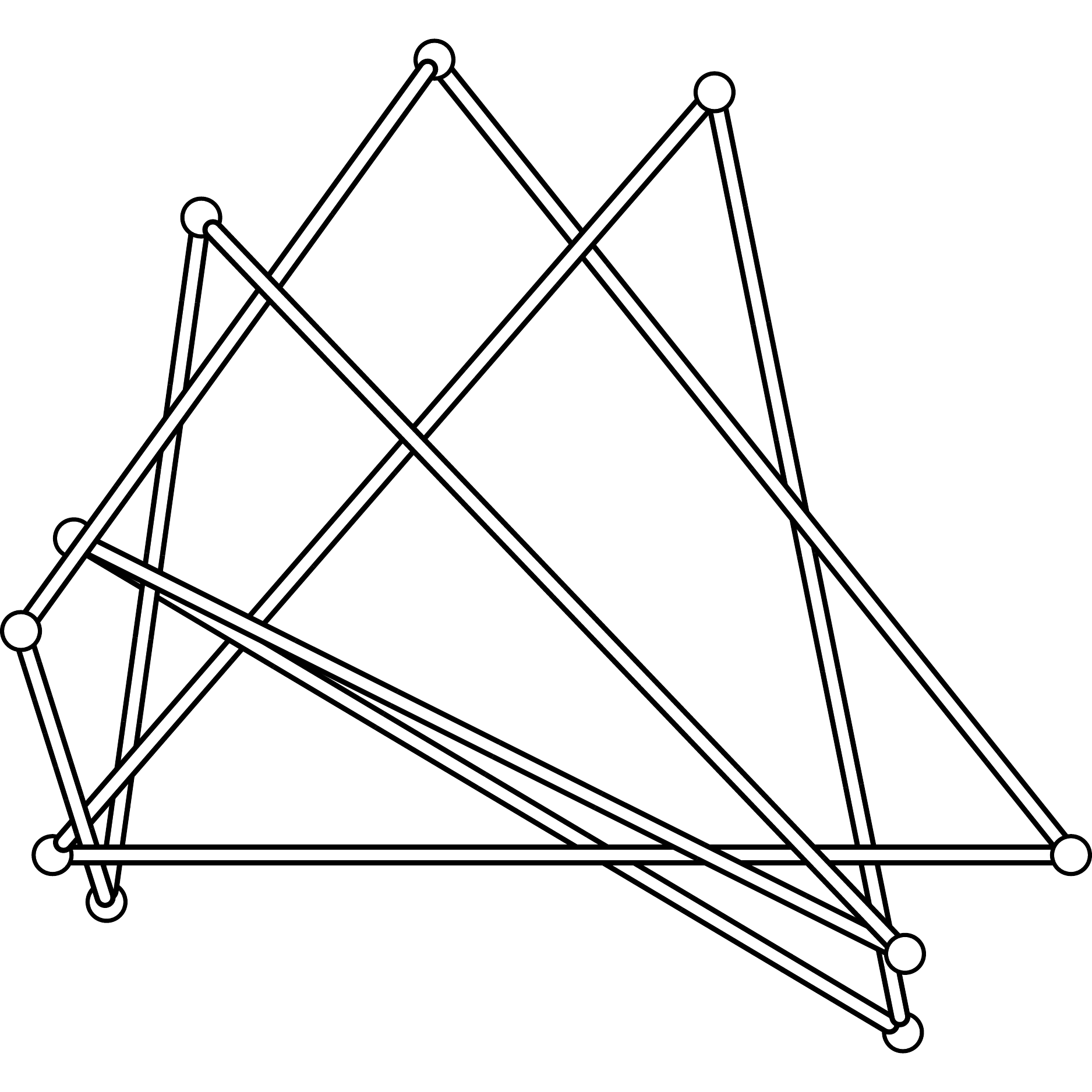}} & $0$ & $0$ & $0$ & $1$ \\
& $1000$ & $0$ & $0$ & $1197682194$ \\
& $375$ & $781$ & $0$ & $1$ \\
& $-31$ & $220$ & $722$ & $1$ \\
& $53$ & $-46$ & $-239$ & $2$ \\
& $146$ & $626$ & $496$ & $3$ \\
& $837$ & $-97$ & $517$ & $647898942$ \\
& $21$ & $311$ & $109$ & $373521904$ \\
& $835$ & $-174$ & $-208$ & $345796927$ \\
& $650$ & $749$ & $129$ & $227928351$ \\
\end{tabular*}

\medskip

\begin{tabular*}{0.85\textwidth}{C{2.2in} R{.4in} R{.4in} R{.4in} | R{1in}}
\multicolumn{5}{c}{$9_{7}$} \\
\cline{1-5}\noalign{\smallskip}
\multirow{10}{*}{\includegraphics[height=1.8in]{9_7.pdf}} & $0$ & $0$ & $0$ & $1$ \\
& $1000$ & $0$ & $0$ & $1422508933$ \\
& $407$ & $805$ & $0$ & $1$ \\
& $-275$ & $382$ & $-597$ & $1$ \\
& $703$ & $272$ & $-420$ & $60828916$ \\
& $-138$ & $168$ & $111$ & $60828918$ \\
& $756$ & $41$ & $-318$ & $928107766$ \\
& $-38$ & $648$ & $-346$ & $10477918$ \\
& $553$ & $-72$ & $20$ & $420725157$ \\
& $401$ & $915$ & $-34$ & $172196045$ \\
\end{tabular*}

\medskip

\begin{tabular*}{0.85\textwidth}{C{2.2in} R{.4in} R{.4in} R{.4in} | R{1in}}
\multicolumn{5}{c}{$9_{16}$} \\
\cline{1-5}\noalign{\smallskip}
\multirow{10}{*}{\includegraphics[height=1.8in]{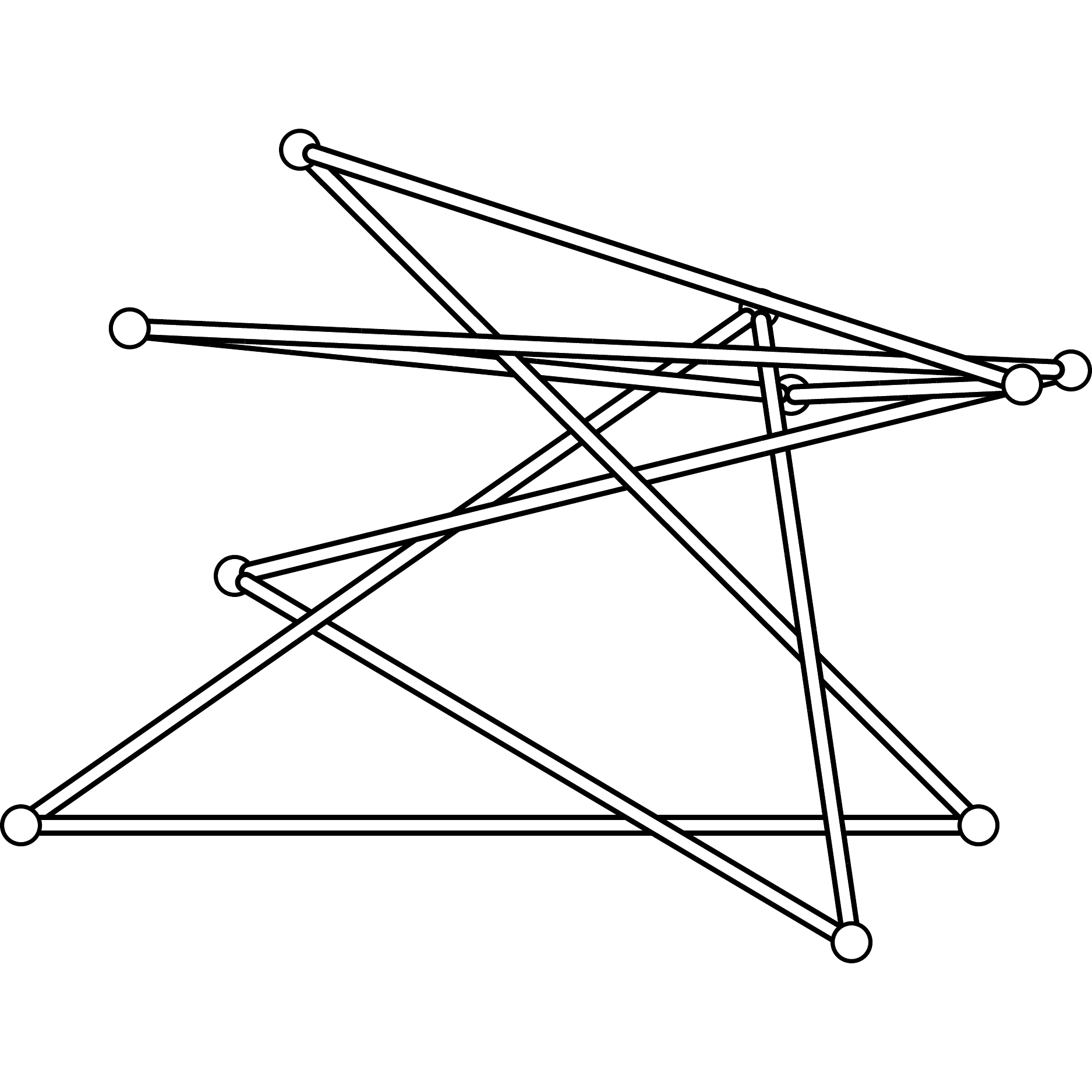}} & $0$ & $0$ & $0$ & $1$ \\
& $1000$ & $0$ & $0$ & $1326961294$ \\
& $291$ & $706$ & $0$ & $1$ \\
& $1046$ & $460$ & $609$ & $1$ \\
& $804$ & $450$ & $-361$ & $1338798152$ \\
& $114$ & $519$ & $359$ & $1$ \\
& $1097$ & $475$ & $180$ & $47421860$ \\
& $223$ & $261$ & $-257$ & $618252967$ \\
& $867$ & $-122$ & $405$ & $442800719$ \\
& $771$ & $539$ & $-339$ & $603197992$ \\
\end{tabular*}

\medskip

\begin{tabular*}{0.85\textwidth}{C{2.2in} R{.4in} R{.4in} R{.4in} | R{1in}}
\multicolumn{5}{c}{$9_{20}$} \\
\cline{1-5}\noalign{\smallskip}
\multirow{10}{*}{\includegraphics[height=1.8in]{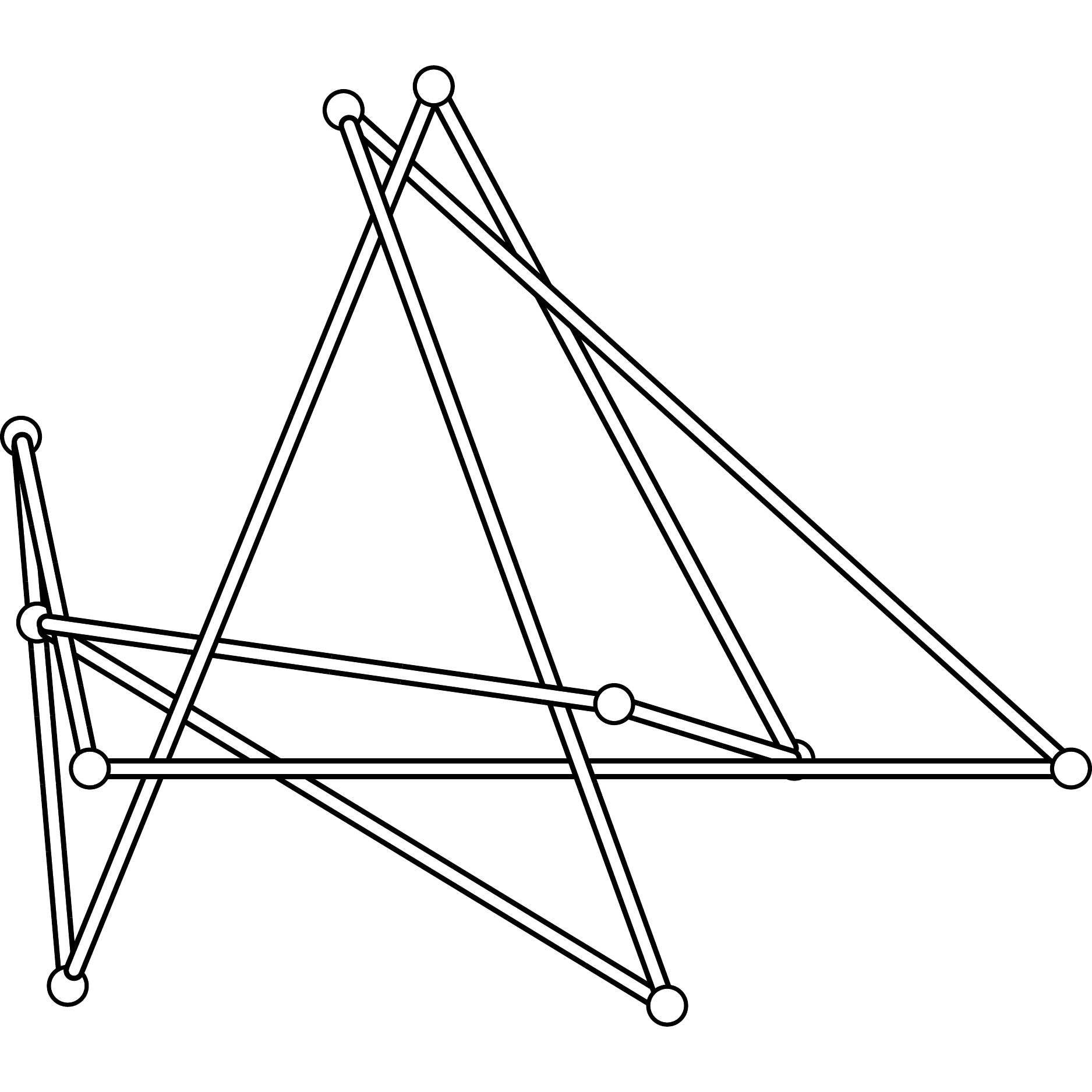}} & $0$ & $0$ & $0$ & $1$ \\
& $1000$ & $0$ & $0$ & $1$ \\
& $259$ & $671$ & $0$ & $380655073$ \\
& $588$ & $-242$ & $241$ & $1$ \\
& $-54$ & $149$ & $-419$ & $3$ \\
& $534$ & $66$ & $386$ & $485280290$ \\
& $718$ & $9$ & $-596$ & $15474847$ \\
& $350$ & $695$ & $32$ & $23424346$ \\
& $-23$ & $-222$ & $-111$ & $298844208$ \\
& $-70$ & $338$ & $-938$ & $356297277$ \\
\end{tabular*}

\medskip

\begin{tabular*}{0.85\textwidth}{C{2.2in} R{.4in} R{.4in} R{.4in} | R{1in}}
\multicolumn{5}{c}{$9_{26}$} \\
\cline{1-5}\noalign{\smallskip}
\multirow{10}{*}{\includegraphics[height=1.8in]{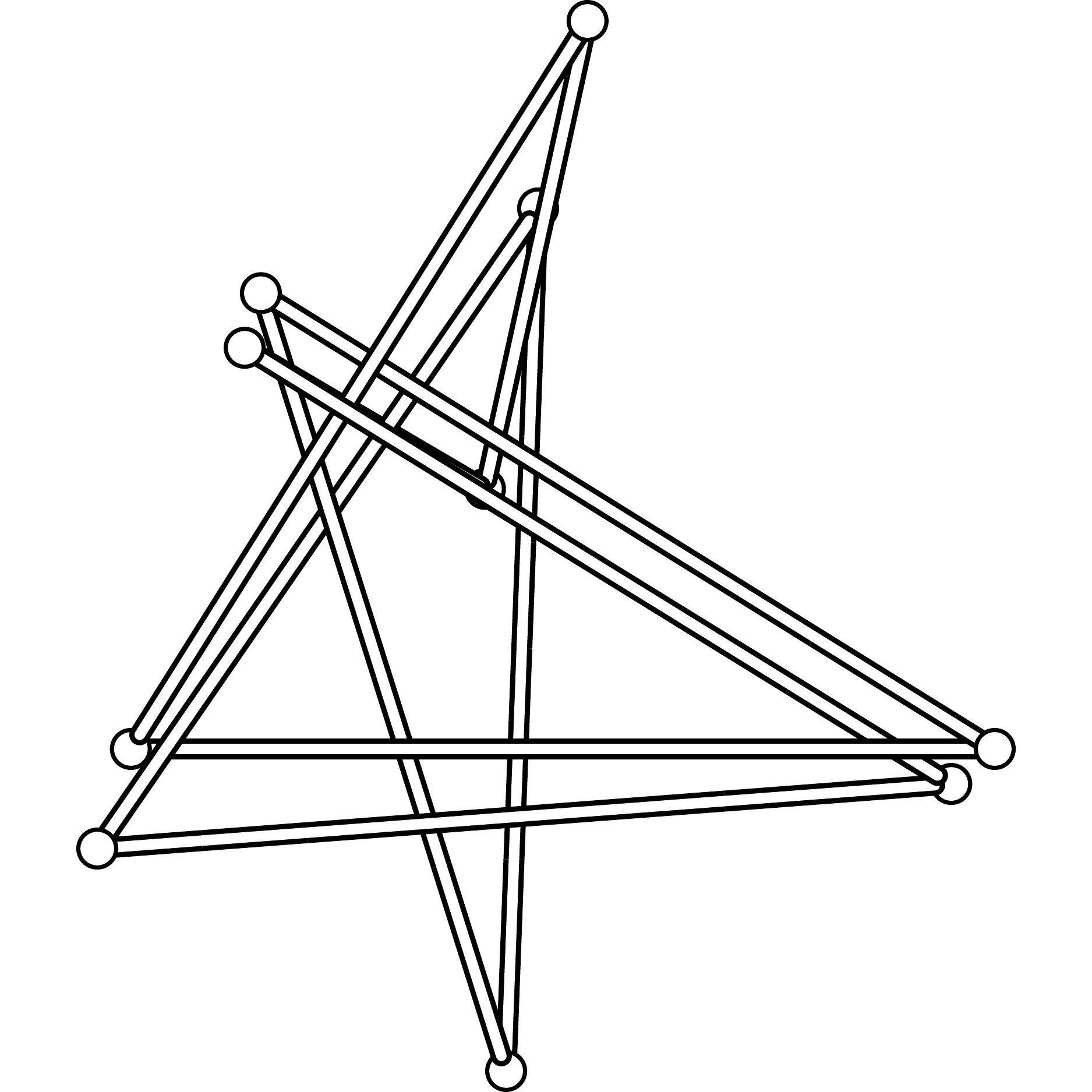}} & $0$ & $0$ & $0$ & $1$ \\
& $1000$ & $0$ & $0$ & $1$ \\
& $151$ & $528$ & $0$ & $3396197047$ \\
& $434$ & $-373$ & $-330$ & $1$ \\
& $471$ & $626$ & $-366$ & $1$ \\
& $-39$ & $-115$ & $70$ & $2906507747$ \\
& $950$ & $-40$ & $-60$ & $8905548$ \\
& $132$ & $464$ & $216$ & $293002848$ \\
& $410$ & $301$ & $-731$ & $99723212$ \\
& $529$ & $843$ & $101$ & $3762353240$ \\
\end{tabular*}

\medskip

\begin{tabular*}{0.85\textwidth}{C{2.2in} R{.4in} R{.4in} R{.4in} | R{1in}}
\multicolumn{5}{c}{$9_{28}$} \\
\cline{1-5}\noalign{\smallskip}
\multirow{10}{*}{\includegraphics[height=1.8in]{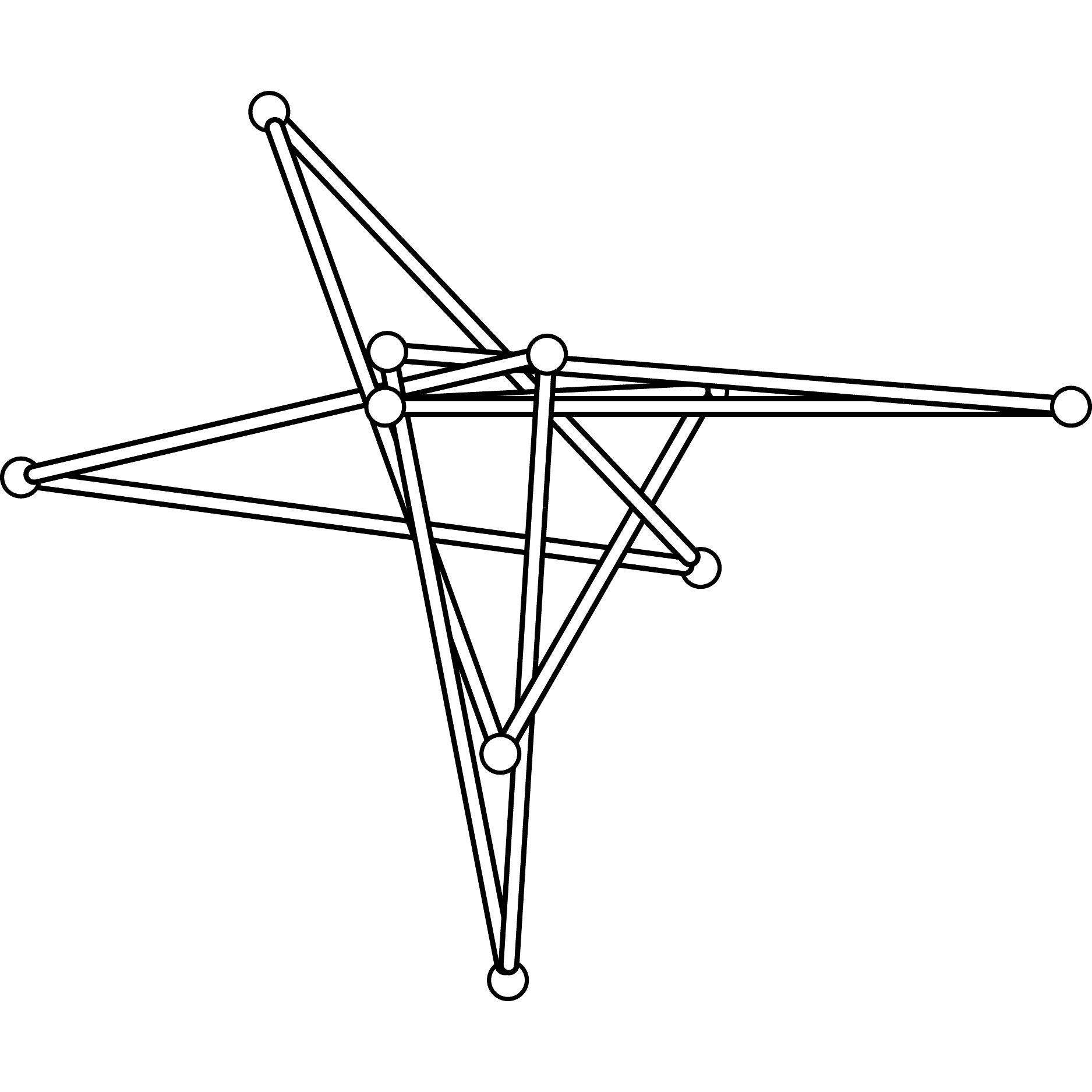}} & $0$ & $0$ & $0$ & $1245264519$ \\
& $1000$ & $0$ & $0$ & $1$ \\
& $3$ & $80$ & $0$ & $2262566461$ \\
& $179$ & $-836$ & $-361$ & $1$ \\
& $236$ & $76$ & $45$ & $1$ \\
& $-531$ & $-103$ & $-570$ & $1$ \\
& $460$ & $-235$ & $-600$ & $2609807812$ \\
& $-169$ & $430$ & $-197$ & $282221245$ \\
& $168$ & $-506$ & $-89$ & $132784622$ \\
& $471$ & $23$ & $-882$ & $112457999$ \\
\end{tabular*}

\medskip

\begin{tabular*}{0.85\textwidth}{C{2.2in} R{.4in} R{.4in} R{.4in} | R{1in}}
\multicolumn{5}{c}{$9_{32}$} \\
\cline{1-5}\noalign{\smallskip}
\multirow{10}{*}{\includegraphics[height=1.8in]{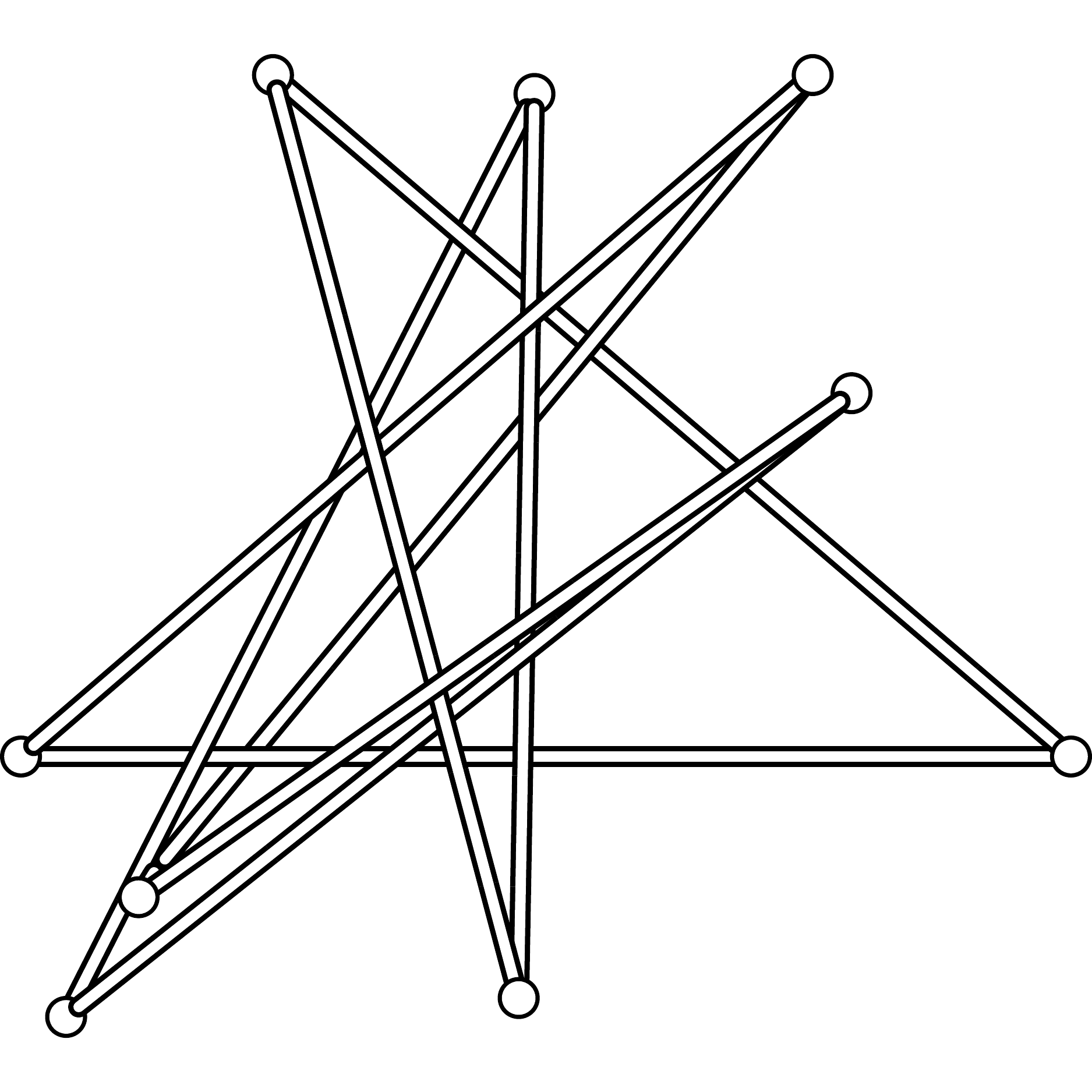}} & $0$ & $0$ & $0$ & $1$ \\
& $1000$ & $0$ & $0$ & $126894982$ \\
& $240$ & $649$ & $0$ & $1$ \\
& $474$ & $-230$ & $415$ & $1$ \\
& $489$ & $631$ & $-94$ & $1$ \\
& $43$ & $-248$ & $76$ & $2020047830$ \\
& $791$ & $346$ & $373$ & $163099052$ \\
& $112$ & $-134$ & $929$ & $275776694$ \\
& $127$ & $-110$ & $-71$ & $231257739$ \\
& $754$ & $649$ & $105$ & $1836124528$ \\
\end{tabular*}

\medskip

\begin{tabular*}{0.85\textwidth}{C{2.2in} R{.4in} R{.4in} R{.4in} | R{1in}}
\multicolumn{5}{c}{$9_{33}$} \\
\cline{1-5}\noalign{\smallskip}
\multirow{10}{*}{\includegraphics[height=1.8in]{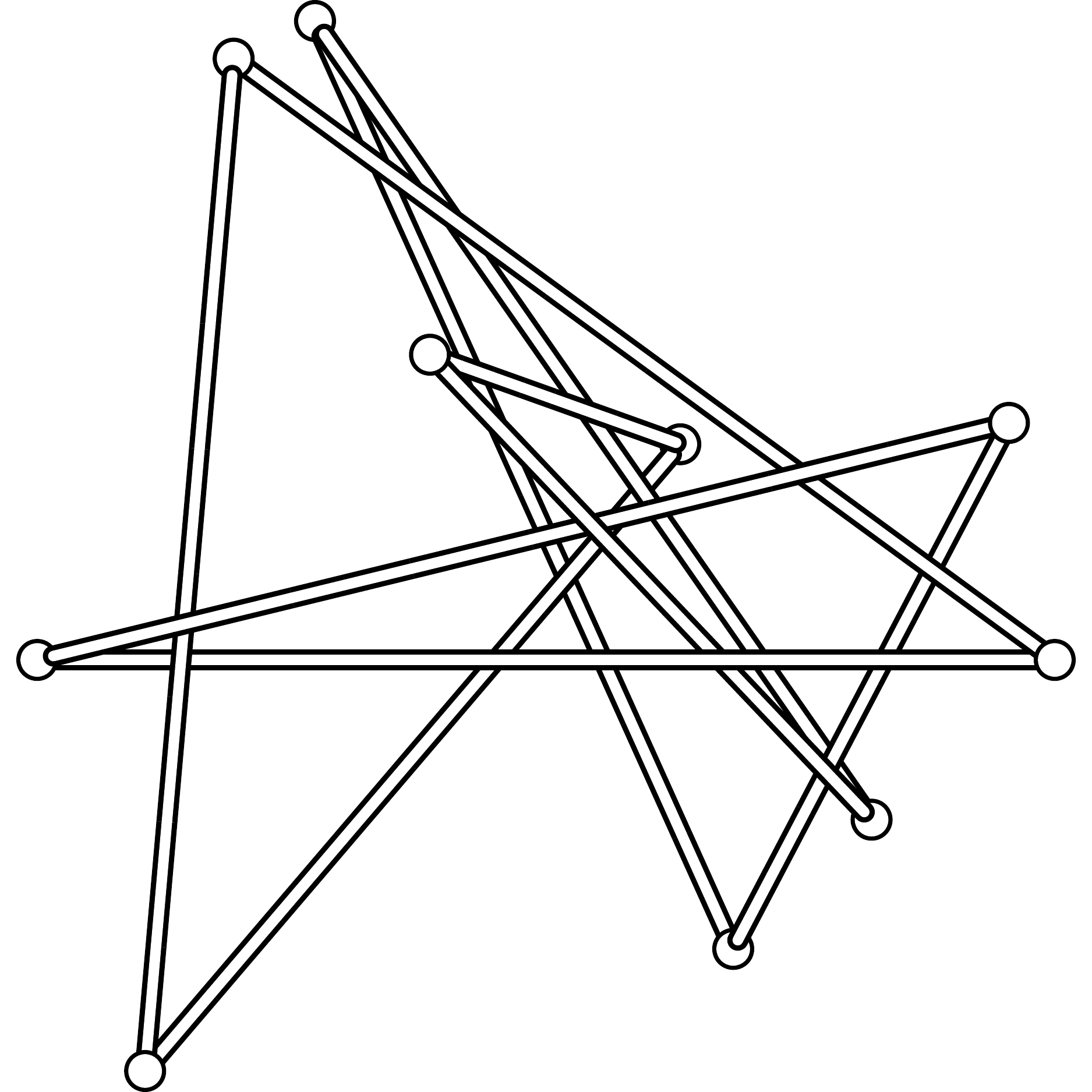}} & $0$ & $0$ & $0$ & $1$ \\
& $1000$ & $0$ & $0$ & $1669432578$ \\
& $193$ & $591$ & $0$ & $1723299318$ \\
& $106$ & $-404$ & $29$ & $1$ \\
& $632$ & $212$ & $-556$ & $1$ \\
& $386$ & $300$ & $409$ & $1$ \\
& $820$ & $-157$ & $-367$ & $747563966$ \\
& $273$ & $628$ & $-658$ & $2156250645$ \\
& $684$ & $-284$ & $-630$ & $275261458$ \\
& $955$ & $233$ & $182$ & $24330471$ \\
\end{tabular*}

\medskip

\begin{tabular*}{0.85\textwidth}{C{2.2in} R{.4in} R{.4in} R{.4in} | R{1in}}
\multicolumn{5}{c}{$10_{76}$} \\
\cline{1-5}\noalign{\smallskip}
\multirow{12}{*}{\includegraphics[height=1.8in]{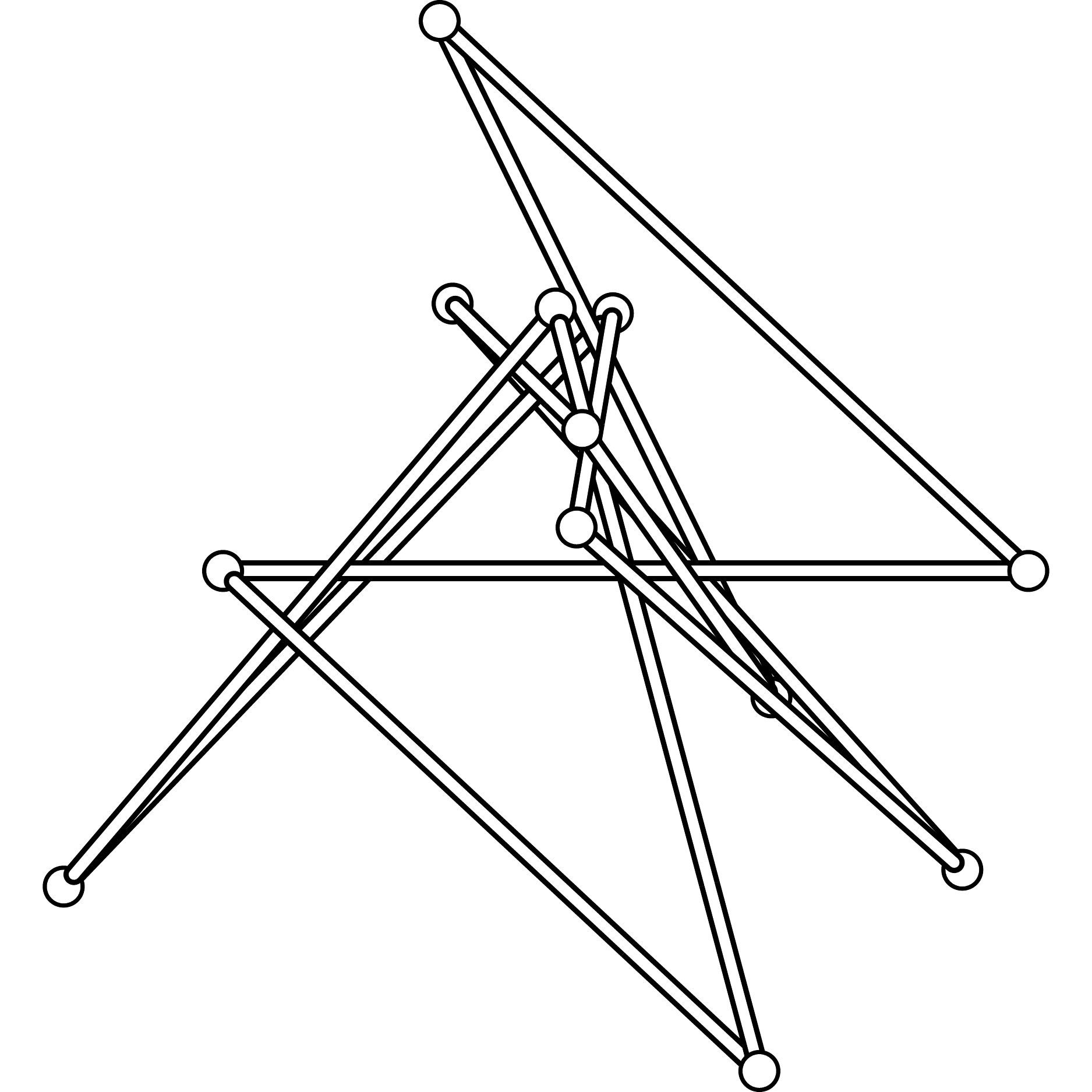}} & $0$ & $0$ & $0$ & $117631072$ \\
& $1000$ & $0$ & $0$ & $1$ \\
& $269$ & $683$ & $0$ & $1$ \\
& $681$ & $-157$ & $-354$ & $1$ \\
& $446$ & $175$ & $560$ & $225822505$ \\
& $285$ & $332$ & $-415$ & $1$ \\
& $918$ & $-371$ & $-89$ & $248306906$ \\
& $439$ & $54$ & $679$ & $11$ \\
& $484$ & $320$ & $-284$ & $1236977$ \\
& $-198$ & $-392$ & $-118$ & $22138732$ \\
& $413$ & $326$ & $215$ & $106439303$ \\
& $666$ & $-621$ & $413$ & $37697596$ \\
\end{tabular*}

\medskip

\begin{tabular*}{0.85\textwidth}{C{2.2in} R{.4in} R{.4in} R{.4in} | R{1in}}
\multicolumn{5}{c}{$13n_{226}$} \\
\cline{1-5}\noalign{\smallskip}
\multirow{12}{*}{\includegraphics[height=1.8in]{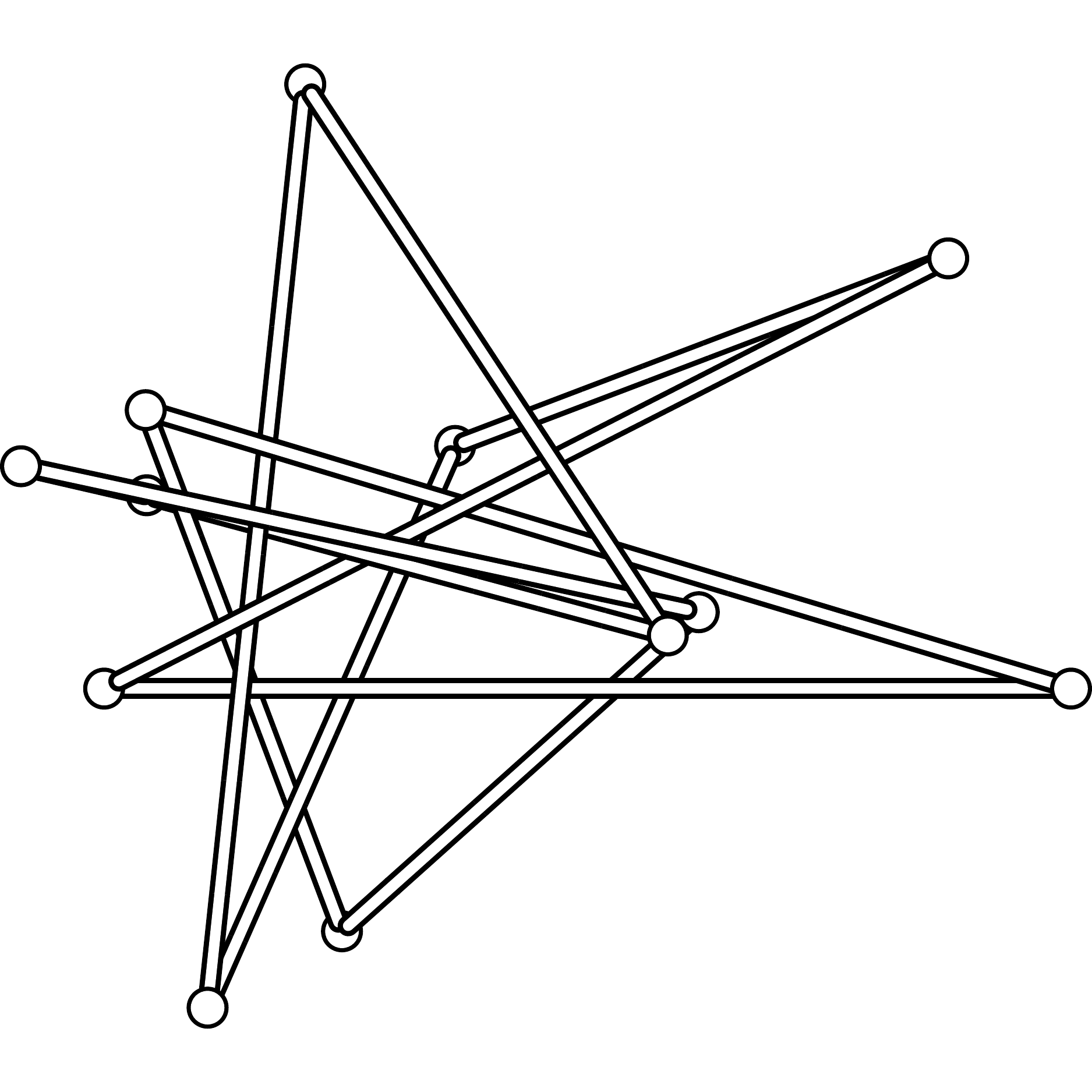}} & $0$ & $0$ & $0$ & $1$ \\
& $1000$ & $0$ & $0$ & $1$ \\
& $43$ & $288$ & $0$ & $1$ \\
& $246$ & $-251$ & $-817$ & $5248415052$ \\
& $615$ & $79$ & $52$ & $3055101237$ \\
& $-86$ & $230$ & $749$ & $1$ \\
& $44$ & $200$ & $-242$ & $1$ \\
& $583$ & $55$ & $588$ & $1$ \\
& $208$ & $625$ & $-144$ & $319244810$ \\
& $107$ & $-330$ & $136$ & $1047123048$ \\
& $363$ & $251$ & $-637$ & $727918502$ \\
& $873$ & $445$ & $201$ & $4590332083$ \\
\end{tabular*}

\medskip

\begin{tabular*}{0.85\textwidth}{C{2.2in} R{.4in} R{.4in} R{.4in} | R{1in}}
\multicolumn{5}{c}{$13n_{328}$} \\
\cline{1-5}\noalign{\smallskip}
\multirow{12}{*}{\includegraphics[height=1.8in]{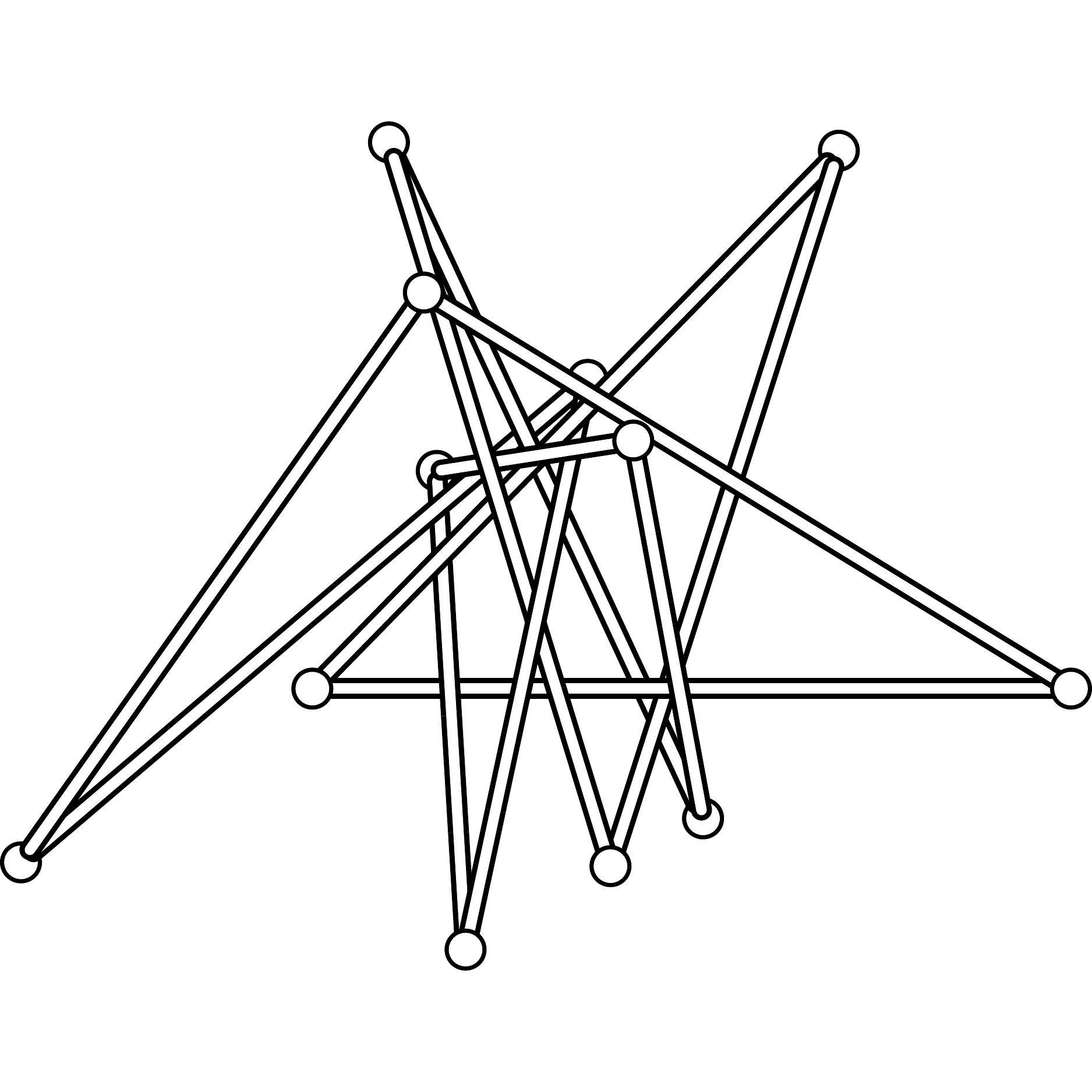}} & $0$ & $0$ & $0$ & $1$ \\
& $1000$ & $0$ & $0$ & $50328991$ \\
& $147$ & $522$ & $0$ & $104647397$ \\
& $-384$ & $-229$ & $-391$ & $1$ \\
& $363$ & $407$ & $-197$ & $1$ \\
& $202$ & $-344$ & $444$ & $1$ \\
& $164$ & $287$ & $-331$ & $1$ \\
& $423$ & $327$ & $634$ & $57317843$ \\
& $515$ & $-171$ & $-228$ & $2800943$ \\
& $101$ & $720$ & $-45$ & $32278655$ \\
& $393$ & $-234$ & $18$ & $21899391$ \\
& $694$ & $709$ & $-123$ & $31561413$ \\
\end{tabular*}

\medskip

\begin{tabular*}{0.85\textwidth}{C{2.2in} R{.4in} R{.4in} R{.4in} | R{1in}}
\multicolumn{5}{c}{$13n_{342}$} \\
\cline{1-5}\noalign{\smallskip}
\multirow{12}{*}{\includegraphics[height=1.8in]{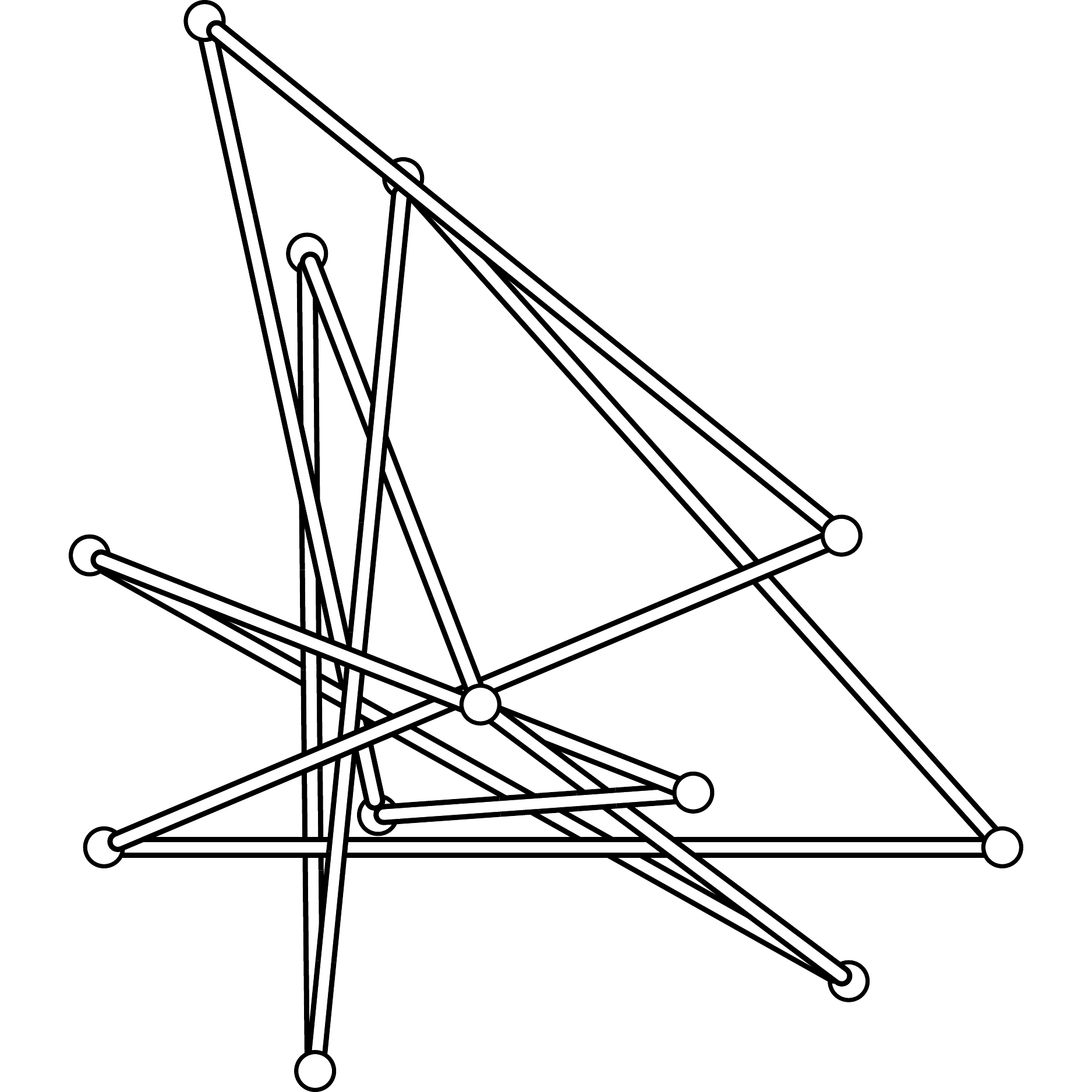}} & $0$ & $0$ & $0$ & $1$ \\
& $1000$ & $0$ & $0$ & $1$ \\
& $333$ & $745$ & $0$ & $1$ \\
& $235$ & $-249$ & $44$ & $1$ \\
& $226$ & $661$ & $-370$ & $1$ \\
& $419$ & $159$ & $472$ & $1$ \\
& $829$ & $-149$ & $-386$ & $2538632228$ \\
& $-16$ & $325$ & $-139$ & $1$ \\
& $656$ & $61$ & $553$ & $1239769501$ \\
& $305$ & $36$ & $-384$ & $1697104582$ \\
& $112$ & $920$ & $43$ & $715303757$ \\
& $821$ & $347$ & $454$ & $2126201923$ \\
\end{tabular*}

\medskip

\begin{tabular*}{0.85\textwidth}{C{2.2in} R{.4in} R{.4in} R{.4in} | R{1in}}
\multicolumn{5}{c}{$13n_{343}$} \\
\cline{1-5}\noalign{\smallskip}
\multirow{12}{*}{\includegraphics[height=1.8in]{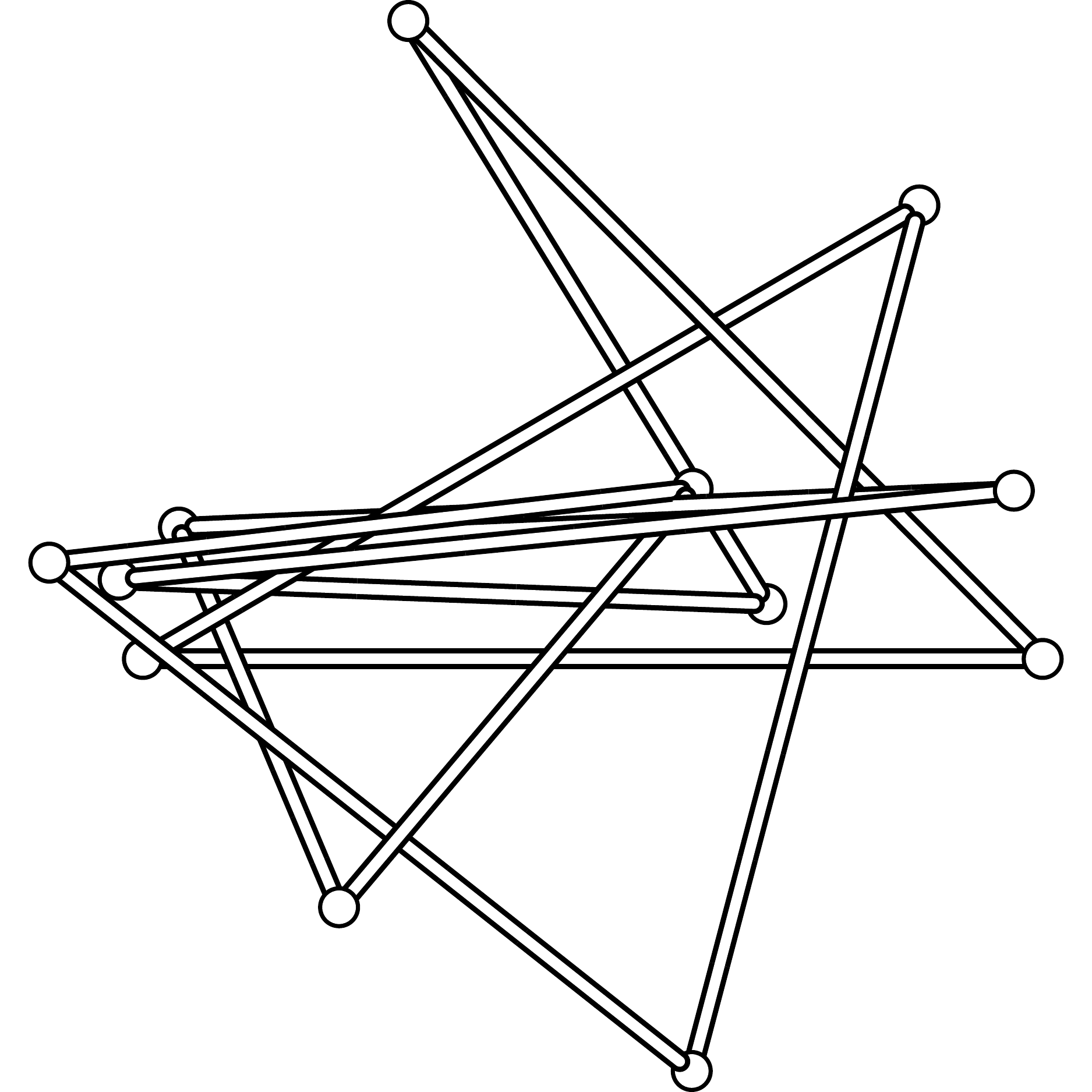}} & $0$ & $0$ & $0$ & $1$ \\
& $1000$ & $0$ & $0$ & $1$ \\
& $295$ & $709$ & $0$ & $2817234095$ \\
& $693$ & $61$ & $-649$ & $1$ \\
& $-27$ & $88$ & $44$ & $1$ \\
& $968$ & $187$ & $67$ & $1$ \\
& $40$ & $146$ & $-302$ & $1$ \\
& $218$ & $-276$ & $586$ & $1$ \\
& $611$ & $189$ & $-207$ & $1681541385$ \\
& $-104$ & $107$ & $487$ & $1751742527$ \\
& $610$ & $-458$ & $73$ & $240644281$ \\
& $863$ & $504$ & $-27$ & $1472658259$ \\
\end{tabular*}

\medskip

\begin{tabular*}{0.85\textwidth}{C{2.2in} R{.4in} R{.4in} R{.4in} | R{1in}}
\multicolumn{5}{c}{$13n_{350}$} \\
\cline{1-5}\noalign{\smallskip}
\multirow{12}{*}{\includegraphics[height=1.8in]{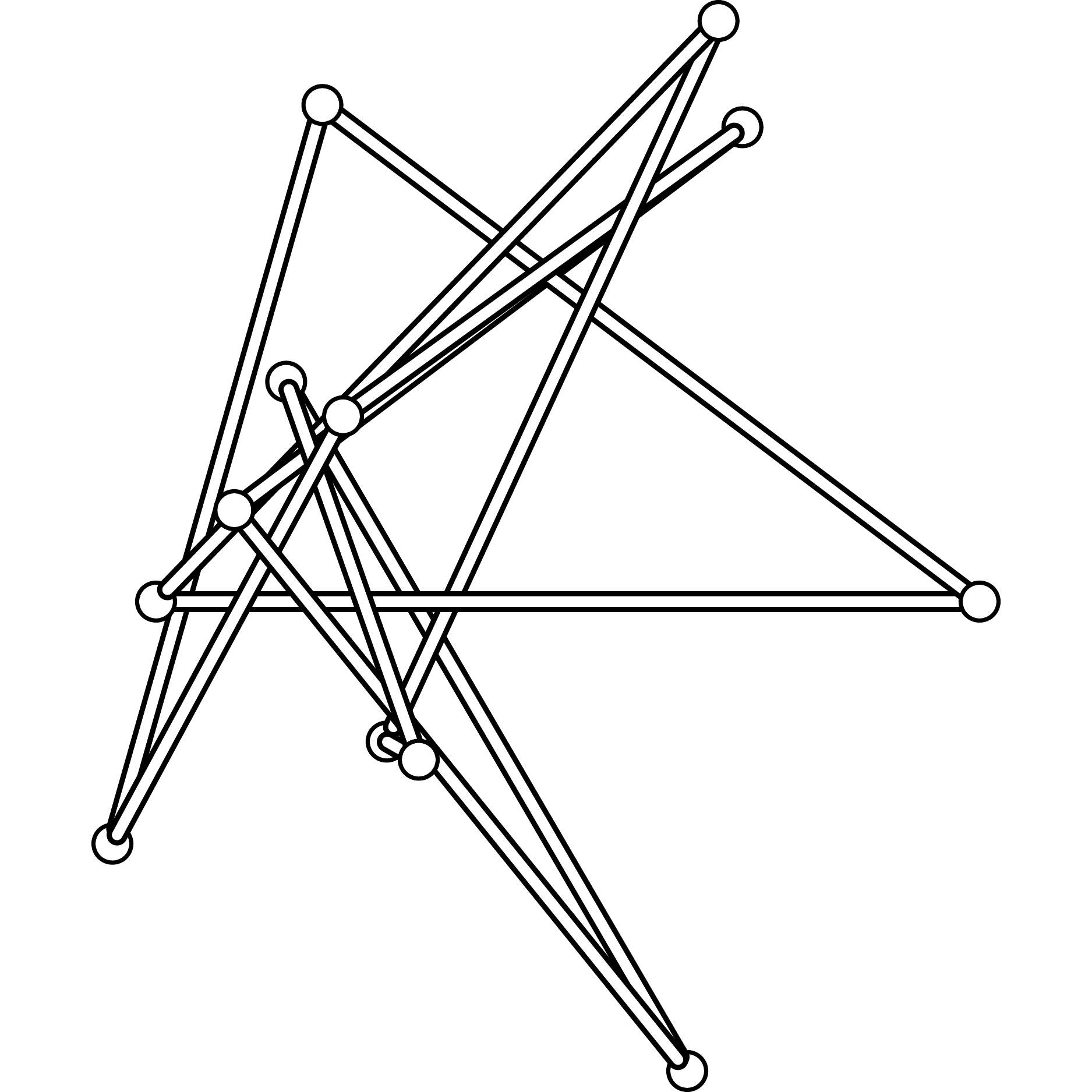}} & $0$ & $0$ & $0$ & $1$ \\
& $1000$ & $0$ & $0$ & $1$ \\
& $202$ & $603$ & $0$ & $1$ \\
& $-53$ & $-294$ & $-361$ & $48569447566$ \\
& $227$ & $225$ & $446$ & $1$ \\
& $712$ & $576$ & $-354$ & $1$ \\
& $95$ & $111$ & $281$ & $1$ \\
& $645$ & $-570$ & $-203$ & $1$ \\
& $158$ & $267$ & $46$ & $236541501$ \\
& $319$ & $-192$ & $920$ & $31084597980$ \\
& $280$ & $-170$ & $-79$ & $28872747294$ \\
& $683$ & $705$ & $189$ & $1044429945$ \\
\end{tabular*}

\medskip

\begin{tabular*}{0.85\textwidth}{C{2.2in} R{.4in} R{.4in} R{.4in} | R{1in}}
\multicolumn{5}{c}{$13n_{512}$} \\
\cline{1-5}\noalign{\smallskip}
\multirow{12}{*}{\includegraphics[height=1.8in]{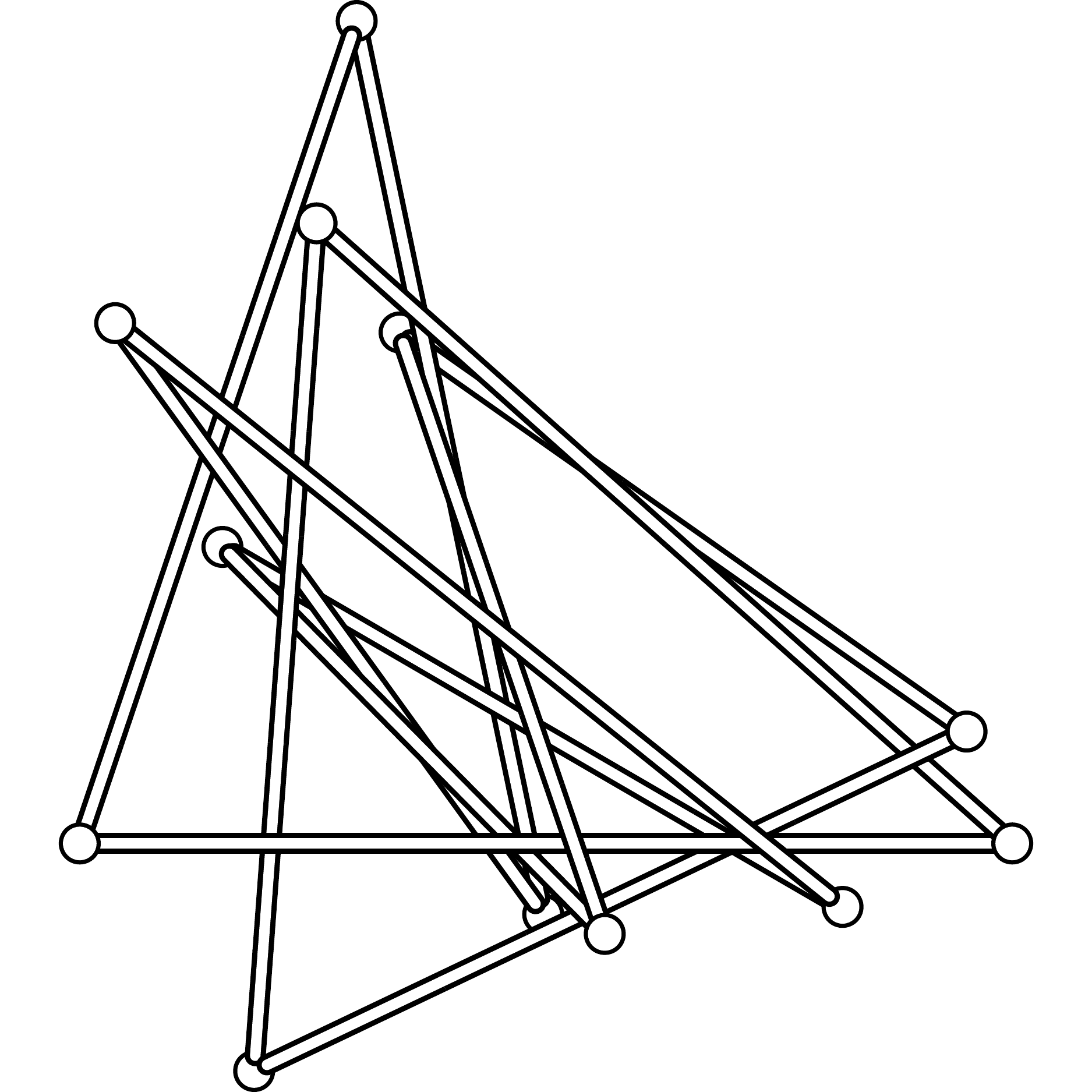}} & $0$ & $0$ & $0$ & $1$ \\
& $1000$ & $0$ & $0$ & $22492829556$ \\
& $254$ & $665$ & $0$ & $1$ \\
& $187$ & $-244$ & $-410$ & $1$ \\
& $951$ & $120$ & $123$ & $1$ \\
& $343$ & $548$ & $-545$ & $1$ \\
& $563$ & $-97$ & $187$ & $1388685548$ \\
& $153$ & $318$ & $-626$ & $1$ \\
& $818$ & $-68$ & $14$ & $20355556367$ \\
& $38$ & $558$ & $52$ & $109085245$ \\
& $497$ & $-76$ & $-572$ & $1552749788$ \\
& $297$ & $882$ & $-365$ & $93145719$ \\
\end{tabular*}

\medskip

\begin{tabular*}{0.85\textwidth}{C{2.2in} R{.4in} R{.4in} R{.4in} | R{1in}}
\multicolumn{5}{c}{$13n_{973}$} \\
\cline{1-5}\noalign{\smallskip}
\multirow{12}{*}{\includegraphics[height=1.8in]{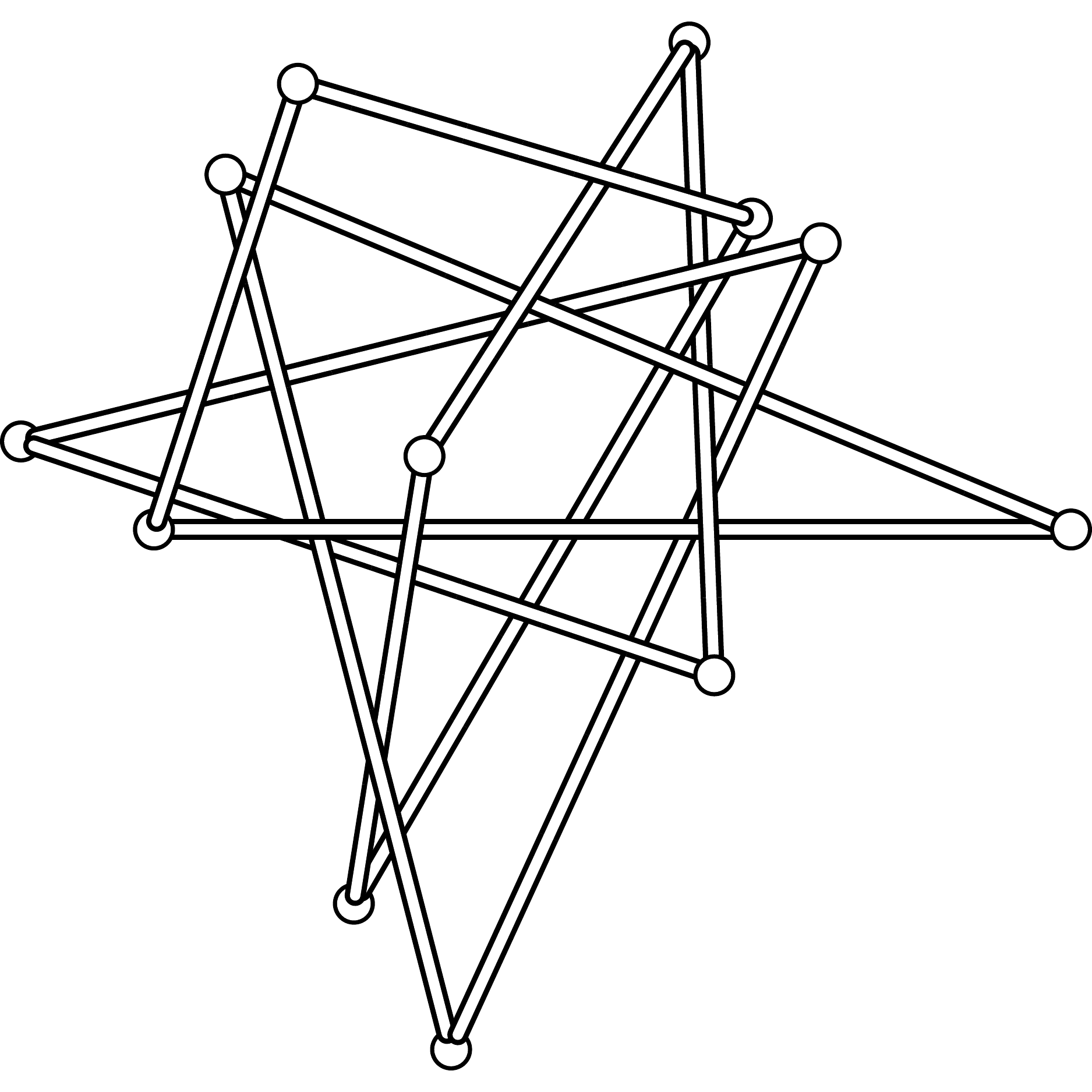}} & $0$ & $0$ & $0$ & $1$ \\
& $1000$ & $0$ & $0$ & $1$ \\
& $78$ & $387$ & $0$ & $1$ \\
& $324$ & $-567$ & $-169$ & $1$ \\
& $727$ & $312$ & $84$ & $1$ \\
& $-145$ & $96$ & $-356$ & $1$ \\
& $611$ & $-159$ & $248$ & $45081917$ \\
& $584$ & $531$ & $-476$ & $1$ \\
& $295$ & $80$ & $369$ & $484870544$ \\
& $218$ & $-408$ & $-501$ & $99346155$ \\
& $652$ & $339$ & $3$ & $16026827$ \\
& $157$ & $486$ & $860$ & $570711808$ \\
\end{tabular*}

\medskip

\begin{tabular*}{0.85\textwidth}{C{2.2in} R{.4in} R{.4in} R{.4in} | R{1in}}
\multicolumn{5}{c}{$13n_{2641}$} \\
\cline{1-5}\noalign{\smallskip}
\multirow{12}{*}{\includegraphics[height=1.8in]{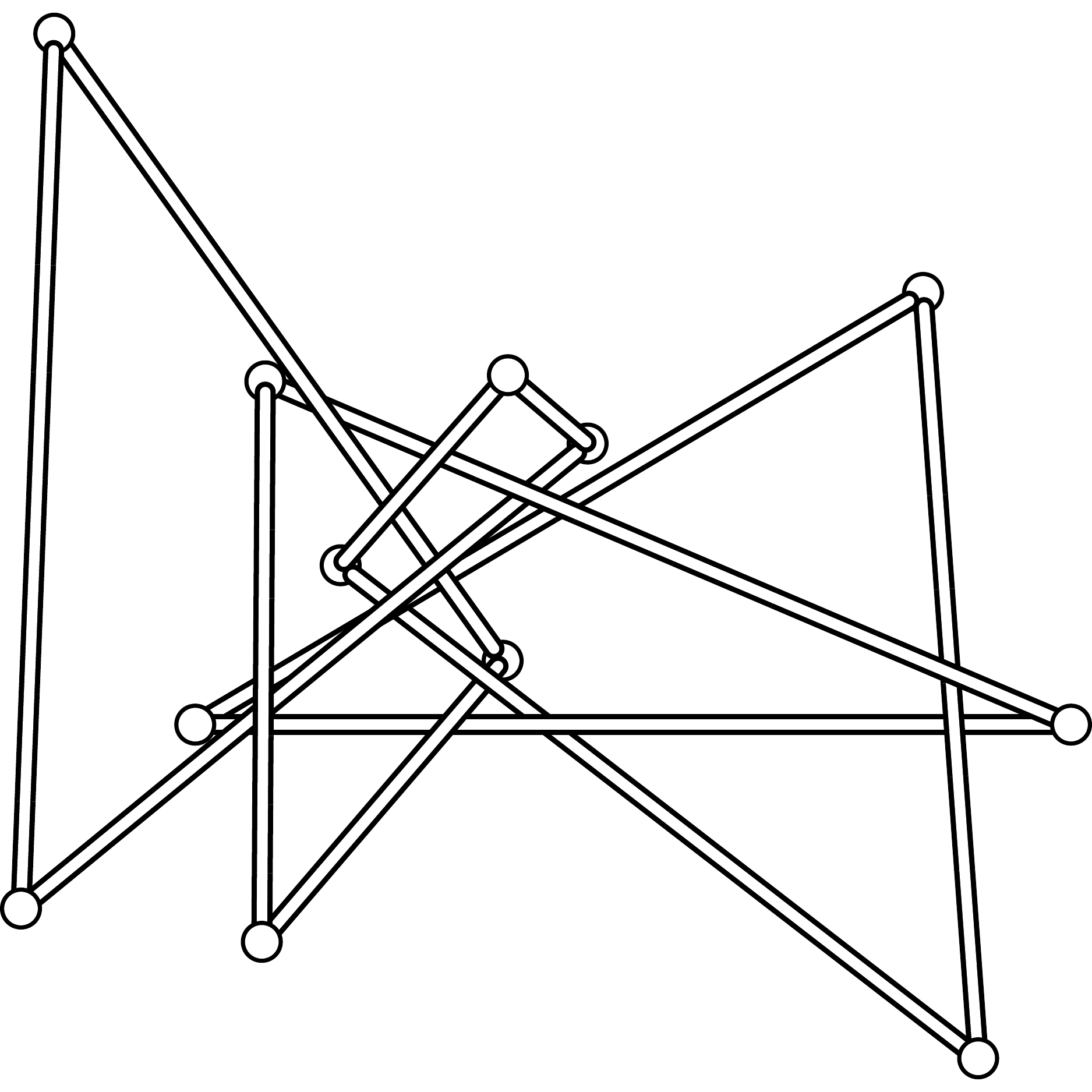}} & $0$ & $0$ & $0$ & $1$ \\
& $1000$ & $0$ & $0$ & $1$ \\
& $80$ & $392$ & $0$ & $4672994362$ \\
& $76$ & $-248$ & $769$ & $1$ \\
& $351$ & $73$ & $-138$ & $1$ \\
& $-161$ & $789$ & $336$ & $1$ \\
& $-199$ & $-210$ & $344$ & $1$ \\
& $448$ & $321$ & $-203$ & $1$ \\
& $357$ & $399$ & $790$ & $1665288265$ \\
& $166$ & $182$ & $-167$ & $2358435584$ \\
& $894$ & $-381$ & $224$ & $884270540$ \\
& $831$ & $493$ & $-257$ & $2538402146$ \\
\end{tabular*}

\medskip

\begin{tabular*}{0.85\textwidth}{C{2.2in} R{.4in} R{.4in} R{.4in} | R{1in}}
\multicolumn{5}{c}{$13n_{5018}$} \\
\cline{1-5}\noalign{\smallskip}
\multirow{12}{*}{\includegraphics[height=1.8in]{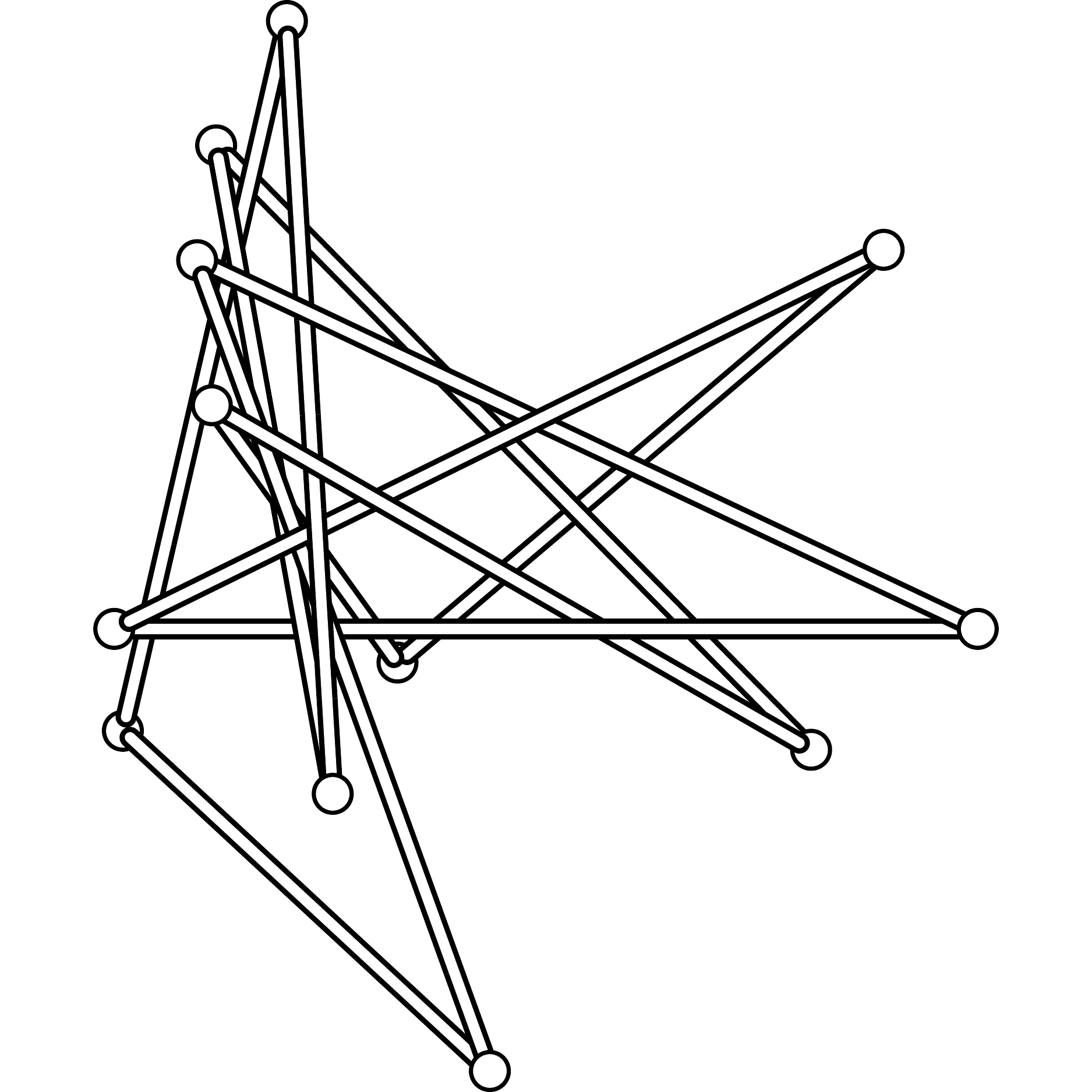}} & $0$ & $0$ & $0$ & $1$ \\
& $1000$ & $0$ & $0$ & $1$ \\
& $96$ & $427$ & $0$ & $1145198373$ \\
& $435$ & $-512$ & $68$ & $1$ \\
& $10$ & $-118$ & $-747$ & $1$ \\
& $200$ & $704$ & $-210$ & $1$ \\
& $253$ & $-191$ & $233$ & $3$ \\
& $118$ & $560$ & $-413$ & $1183487783$ \\
& $807$ & $-140$ & $-228$ & $2020126$ \\
& $113$ & $259$ & $371$ & $77920660$ \\
& $328$ & $-39$ & $-560$ & $17170427$ \\
& $891$ & $439$ & $114$ & $488988542$ \\
\end{tabular*}

\medskip

\begin{tabular*}{0.85\textwidth}{C{2.2in} R{.4in} R{.4in} R{.4in} | R{1in}}
\multicolumn{5}{c}{$14n_{1753}$} \\
\cline{1-5}\noalign{\smallskip}
\multirow{12}{*}{\includegraphics[height=1.8in]{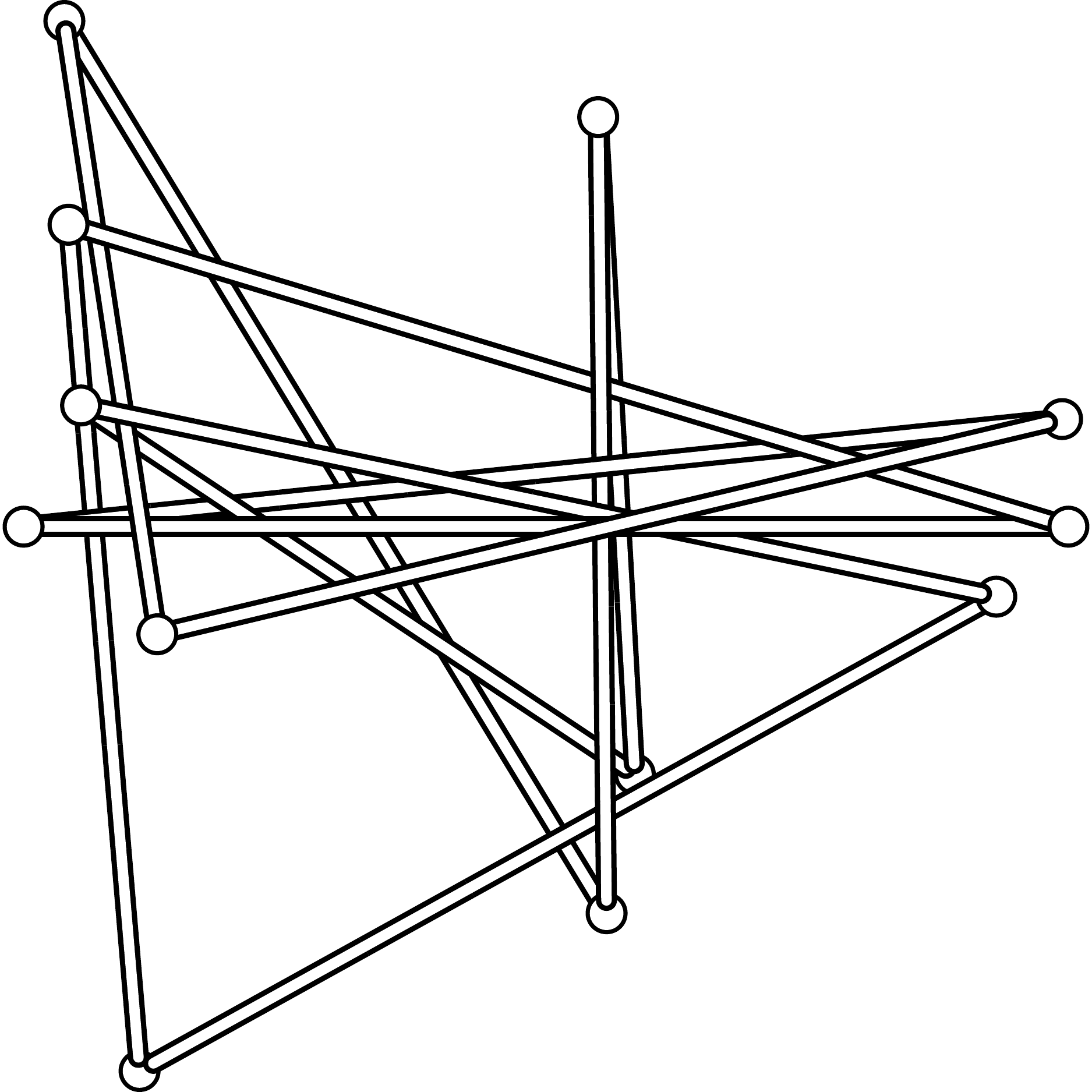}} & $0$ & $0$ & $0$ & $1$ \\
& $1000$ & $0$ & $0$ & $1$ \\
& $43$ & $289$ & $0$ & $1320037255$ \\
& $111$ & $-521$ & $-582$ & $1$ \\
& $931$ & $-67$ & $-234$ & $659509317$ \\
& $55$ & $116$ & $212$ & $1319018634$ \\
& $585$ & $-237$ & $-558$ & $1$ \\
& $550$ & $392$ & $219$ & $1$ \\
& $558$ & $-370$ & $-429$ & $65751782$ \\
& $39$ & $484$ & $-393$ & $442092810$ \\
& $128$ & $-103$ & $411$ & $315221136$ \\
& $994$ & $103$ & $-45$ & $993498425$ \\
\end{tabular*}

\end{center}

\clearpage

\section{Homomorphisms to the Symmetric Group} 
\label{sec:snappy}

SnapPy~\cite{snappy} diagrams of each of the knots with $\geq 13$ crossings mentioned in \Cref{thm:main}, together with surjective homomorphisms $\pi_1(S^3\!\backslash K) \twoheadrightarrow S_5$. The homomorphisms are specified by indicating the images of certain Wirtinger generators; the images of the remaining Wirtinger generators are determined by applying the Wirtinger relations at each crossing. These homomorphisms were found using a preliminary version of {\tt Wirt\_Hm\_Suite}~\cite{Wirt-Hm-Suite}.

The Wirtinger generators are described in terms of the corresponding strand in the diagram, which is specified by indicating the under crossings at which the strand begins and ends, and the over crossings (if any) along the way. For example, $(-11,12,10,-9)$ in the diagram for $13n_{226}$ below starts at the under strand of crossing 11, passes over crossings 12 and 10, and then ends as the under strand at crossing 9.

\setlength{\tabcolsep}{2pt}

\begin{center}
	\begin{tabular*}{0.7\textwidth}{C{2.5in} R{1in} L{.45in}}
	\multicolumn{3}{c}{$13n_{226}$} \\
	\midrule
	\multirow{13}{*}{\includegraphics[scale=0.5]{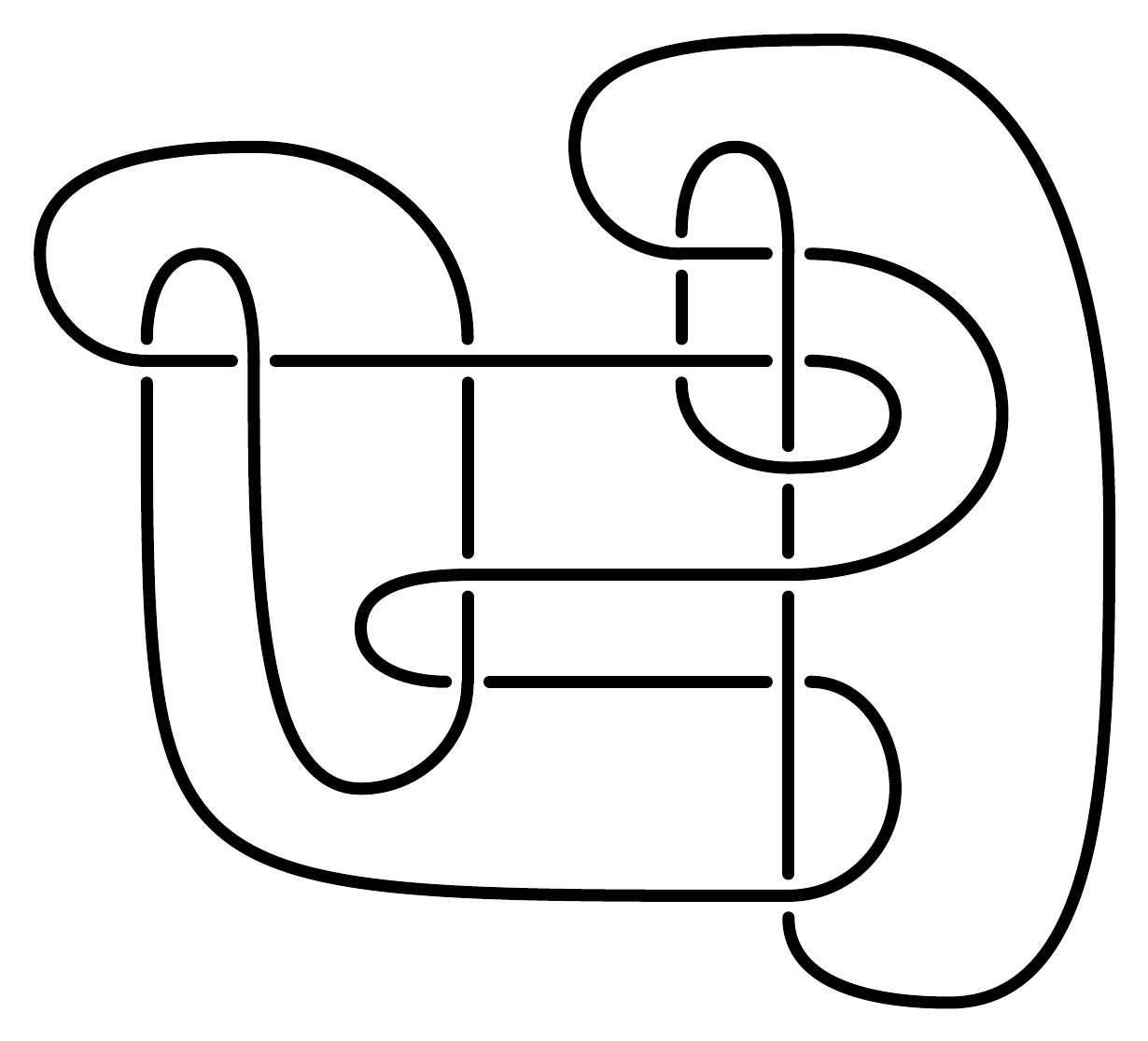}
			\put(-71,107){$1$}
			\put(-54,107){$2$}
			\put(-54,91){$3$}
			\put(-70,124){$4$}
			\put(-54,124){$5$}
			\put(-54,75){$6$}
			\put(-54,58){$7$}
			\put(-54,25){$8$}
			\put(-103,75){$9$}
			\put(-104,58){$10$}
			\put(-154,107){$11$}
			\put(-138,107){$12$}
			\put(-104,107){$13$}}
	& $(-3, -6)$ & $\mapsto (1\,2)$ \\
	& $(-2, 3, -1)$ & $\mapsto (1\,3)$ \\
	& $(-10, -7)$ & $\mapsto (1\,4)$ \\
	& $(-11, 12, 10, -9)$ & $\mapsto (2\,5)$\\
	& & \\
	& & \\
	& & \\
	& & \\
	& & \\
	& & \\
	& & \\
	& & \\
	& & \\
	\end{tabular*}

	\medskip

	\begin{tabular*}{0.7\textwidth}{C{2.5in} R{1in} L{.45in}}
	\multicolumn{3}{c}{$13n_{328}$} \\
	\midrule
	\multirow{13}{*}{\includegraphics[scale=0.5]{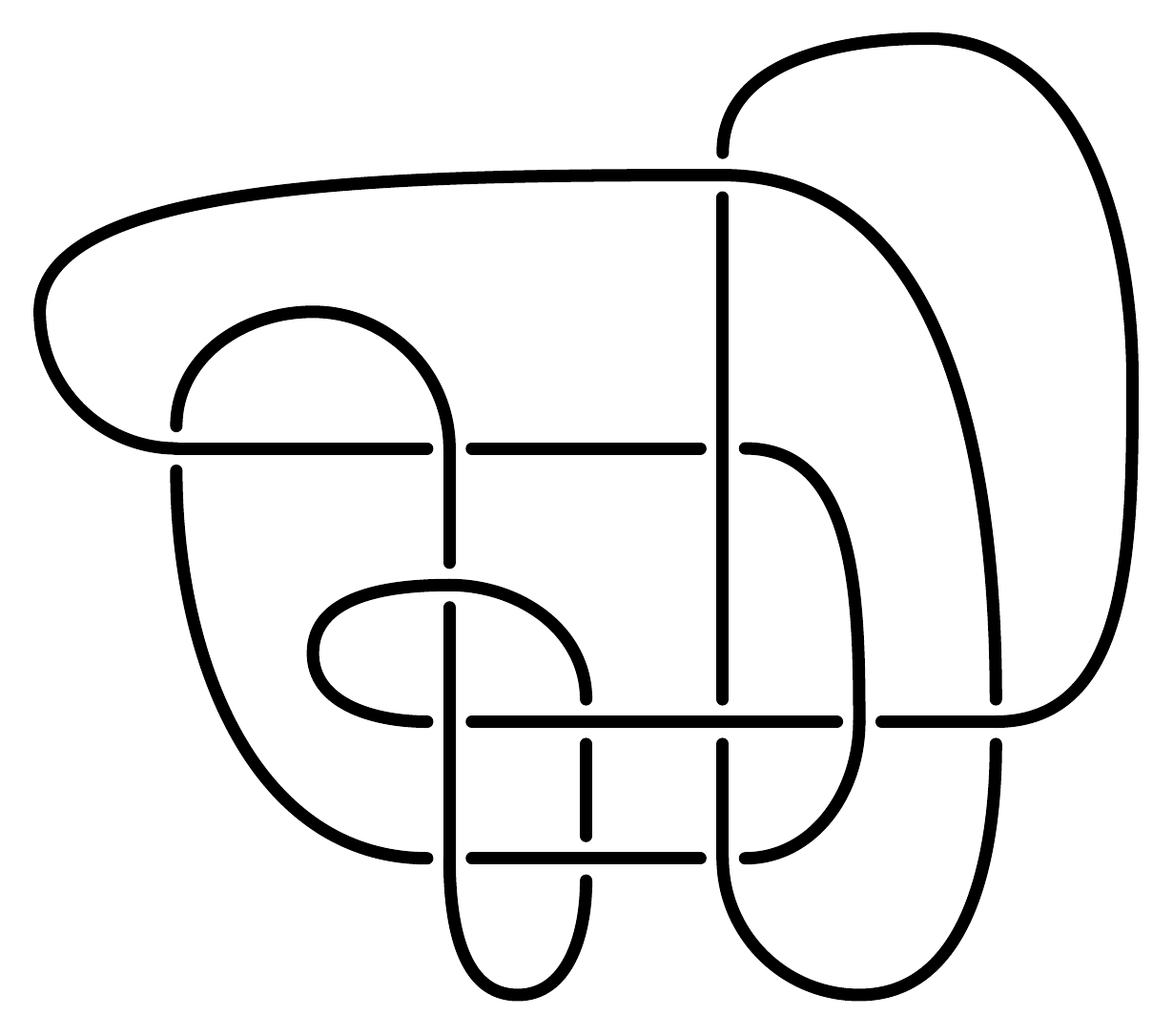}
			\put(-88,50){$1$}
			\put(-108,50){$2$}
			\put(-108,70){$3$}
			\put(-87,29){$4$}
			\put(-108,29){$5$}
			\put(-108,91){$6$}
			\put(-149,91){$7$}
			\put(-67,29){$8$}
			\put(-46,50){$9$}
			\put(-67,91){$10$}
			\put(-67,132){$11$}
			\put(-26,52){$12$}
			\put(-67,50){$13$}}
	& $(-7, -5)$ & $\mapsto (1\,2)$ \\
	& $(-2, 3, -1)$ & $\mapsto (1\,3)$ \\
	& $(-8, 9, -10)$ & $\mapsto (1\,4)$ \\
	& $(-6, 7, 11, -12)$ & $\mapsto (2\,5)$\\
	& & \\
	& & \\
	& & \\
	& & \\
	& & \\
	& & \\
	& & \\
	& & \\
	& & \\
	\end{tabular*}

	\medskip
	
	\begin{tabular*}{0.7\textwidth}{C{2.5in} R{1in} L{.45in}}
	\multicolumn{3}{c}{$13n_{342}$} \\
	\midrule
	\multirow{10}{*}{\includegraphics[scale=0.5]{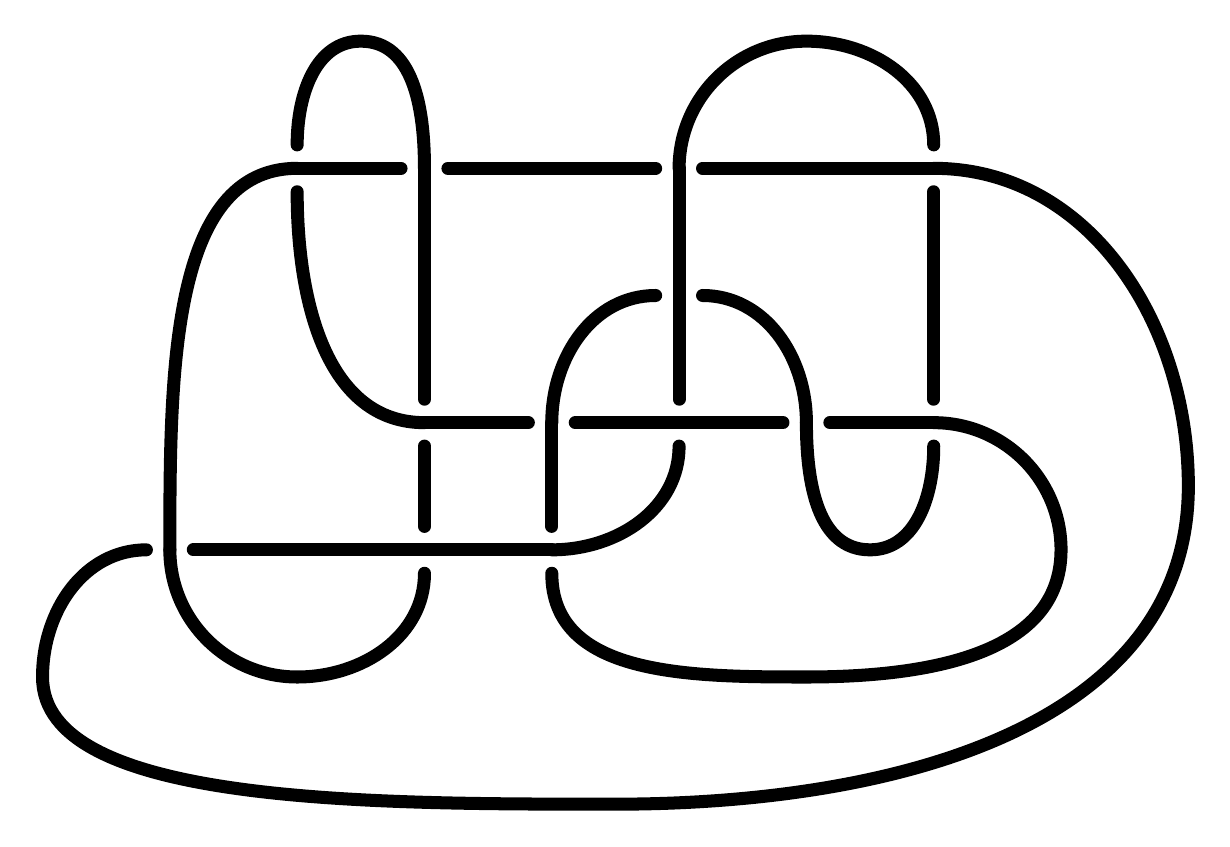}
			\put(-115,63){$1$}
			\put(-133,100){$2$}
			\put(-115,100){$3$}
			\put(-114.6,45){$4$}
			\put(-151,45){$5$}
			\put(-77,100){$6$}
			\put(-42,100){$7$}
			\put(-96,46){$8$}
			\put(-77,63){$9$}
			\put(-78,82){$10$}
			\put(-42,63){$11$}
			\put(-60,63){$12$}
			\put(-96,63){$13$}}
	& $(-7, -11)$ & $\mapsto (1\,2)$ \\
	& $(-5, 4, 8, -9)$ & $\mapsto (1\,3)$ \\
	& $(-2, 3, -1)$ & $\mapsto (1\,4)$ \\
	& $(-8, 11, -12)$ & $\mapsto (1\,5)$\\
	& & \\
	& & \\
	& & \\
	& & \\
	& & \\
	& & \\
	\end{tabular*}

	\medskip

	\begin{tabular*}{0.7\textwidth}{C{2.5in} R{1in} L{.45in}}
	\multicolumn{3}{c}{$13n_{343}$} \\
	\midrule
	\multirow{13}{*}{\includegraphics[scale=0.5]{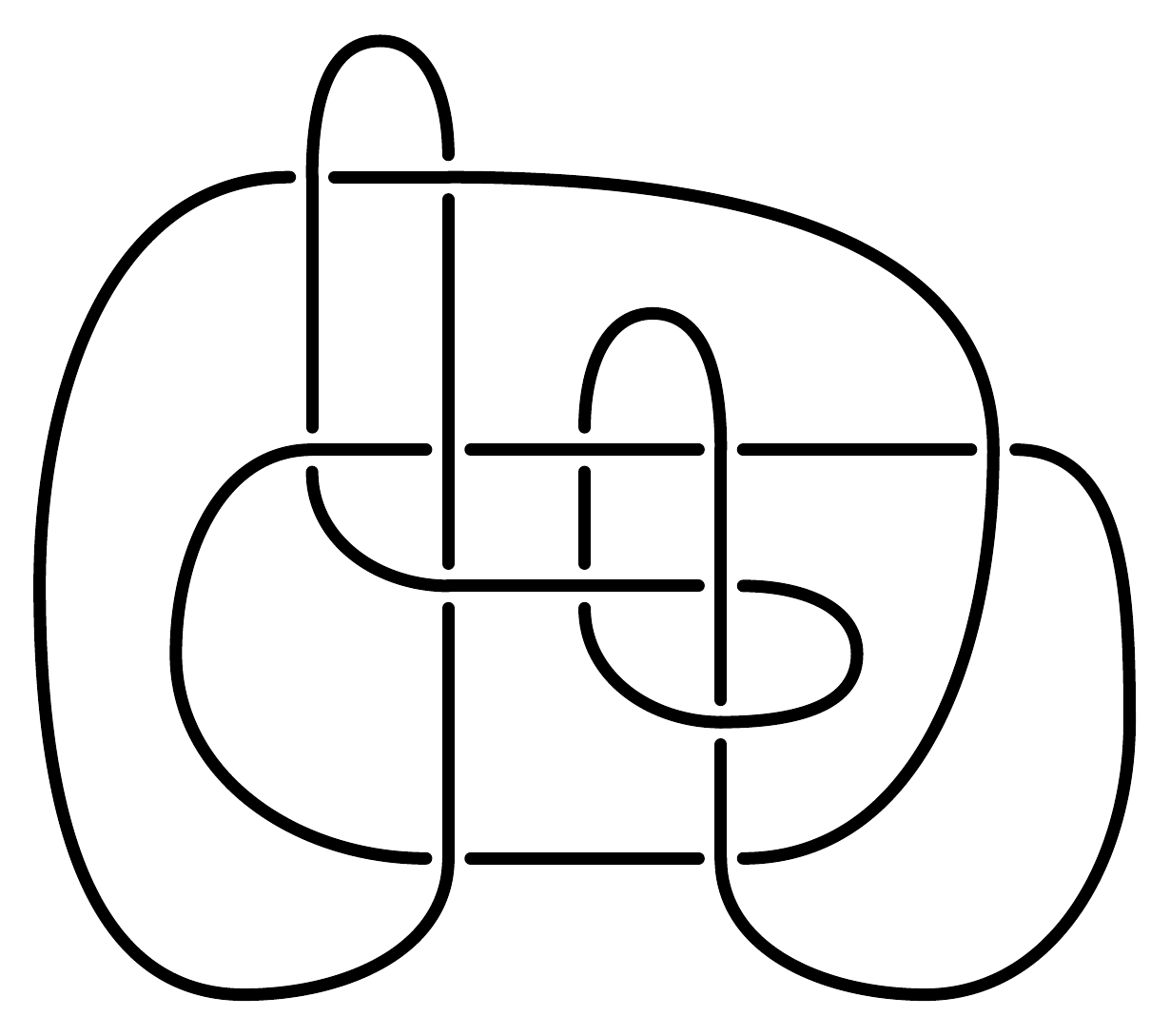}
			\put(-88,70){$1$}
			\put(-67,70){$2$}
			\put(-67,50){$3$}
			\put(-87,91){$4$}
			\put(-67,91){$5$}
			\put(-67,29){$6$}
			\put(-25,91){$7$}
			\put(-108,91){$8$}
			\put(-129,91){$9$}
			\put(-108.5,29){$10$}
			\put(-108.5,132){$11$}
			\put(-129,132){$12$}
			\put(-108.5,70){$13$}}
	& $(-7, -5)$ & $\mapsto (1\,2)$ \\
	& $(-2, 3, -1)$ & $\mapsto (1\,3)$ \\
	& $(-8, 9, -10)$ & $\mapsto (1\,4)$ \\
	& $(-6, 7, 11, -12)$ & $\mapsto (2\,5)$\\
	& & \\
	& & \\
	& & \\
	& & \\
	& & \\
	& & \\
	& & \\
	& & \\
	& & \\
	\end{tabular*}

	\medskip

	\begin{tabular*}{0.7\textwidth}{C{2.5in} R{1in} L{.45in}}
	\multicolumn{3}{c}{$13n_{350}$} \\
	\midrule
	\multirow{15}{*}{\includegraphics[scale=0.5]{K13n350SnapPy.pdf}
			\put(-47,50){$1$}
			\put(-46,70){$2$}
			\put(-25,70){$3$}
			\put(-66,50){$4$}
			\put(-66,70){$5$}
			\put(-66,153){$6$}
			\put(-66,112){$7$}
			\put(-129,71){$8$}
			\put(-129,111){$9$}
			\put(-129,132){$10$}
			\put(-109,111){$11$}
			\put(-150,111){$12$}
			\put(-150,71){$13$}}
	& $(-7, -5)$ & $\mapsto (1\,2)$ \\
	& $(-8, 9, -10)$ & $\mapsto (1\,3)$ \\
	& $(-6, 7, 11, -9)$ & $\mapsto (1\,4)$ \\
	& $(-2, 3, -1)$ & $\mapsto (2\,5)$\\
	& & \\
	& & \\
	& & \\
	& & \\
	& & \\
	& & \\
	& & \\
	& & \\
	& & \\
	& & \\
	& & \\
	\end{tabular*}

	\medskip

	\begin{tabular*}{0.7\textwidth}{C{2.5in} R{1in} L{.45in}}
	\multicolumn{3}{c}{$13n_{512}$} \\
	\midrule
	\multirow{11}{*}{\includegraphics[scale=0.5]{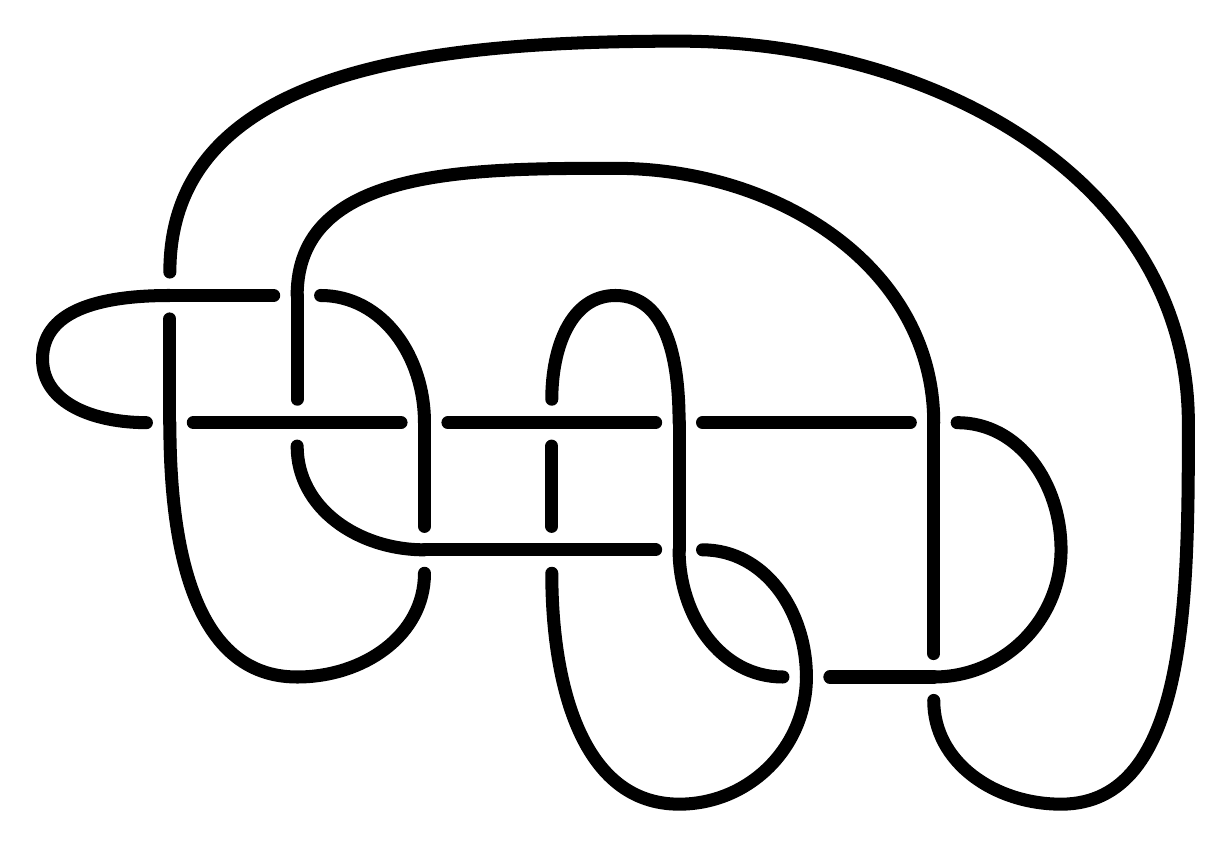}
			\put(-97,45){$1$}
			\put(-78,45){$2$}
			\put(-60,27){$3$}
			\put(-96,63){$4$}
			\put(-78,63){$5$}
			\put(-41,27){$6$}
			\put(-42,63){$7$}
			\put(-115,63){$8$}
			\put(-133,63){$9$}
			\put(-152,63){$10$}
			\put(-152,81){$11$}
			\put(-133,81){$12$}
			\put(-115,45){$13$}}
	& $(-7, -5)$ & $\mapsto (1\,2)$ \\
	& $(-6, 7, 12, -9)$ & $\mapsto (1\,3)$ \\
	& $(-2, 3, -1)$ & $\mapsto (2\,4)$ \\
	& $(-12, 8, -13)$ & $\mapsto (3\,5)$\\
	& & \\
	& & \\
	& & \\
	& & \\
	& & \\
	& & \\
	\end{tabular*}

	\medskip

	\begin{tabular*}{0.7\textwidth}{C{2.5in} R{1in} L{.45in}}
	\multicolumn{3}{c}{$13n_{973}$} \\
	\midrule
	\multirow{12}{*}{\includegraphics[scale=0.5]{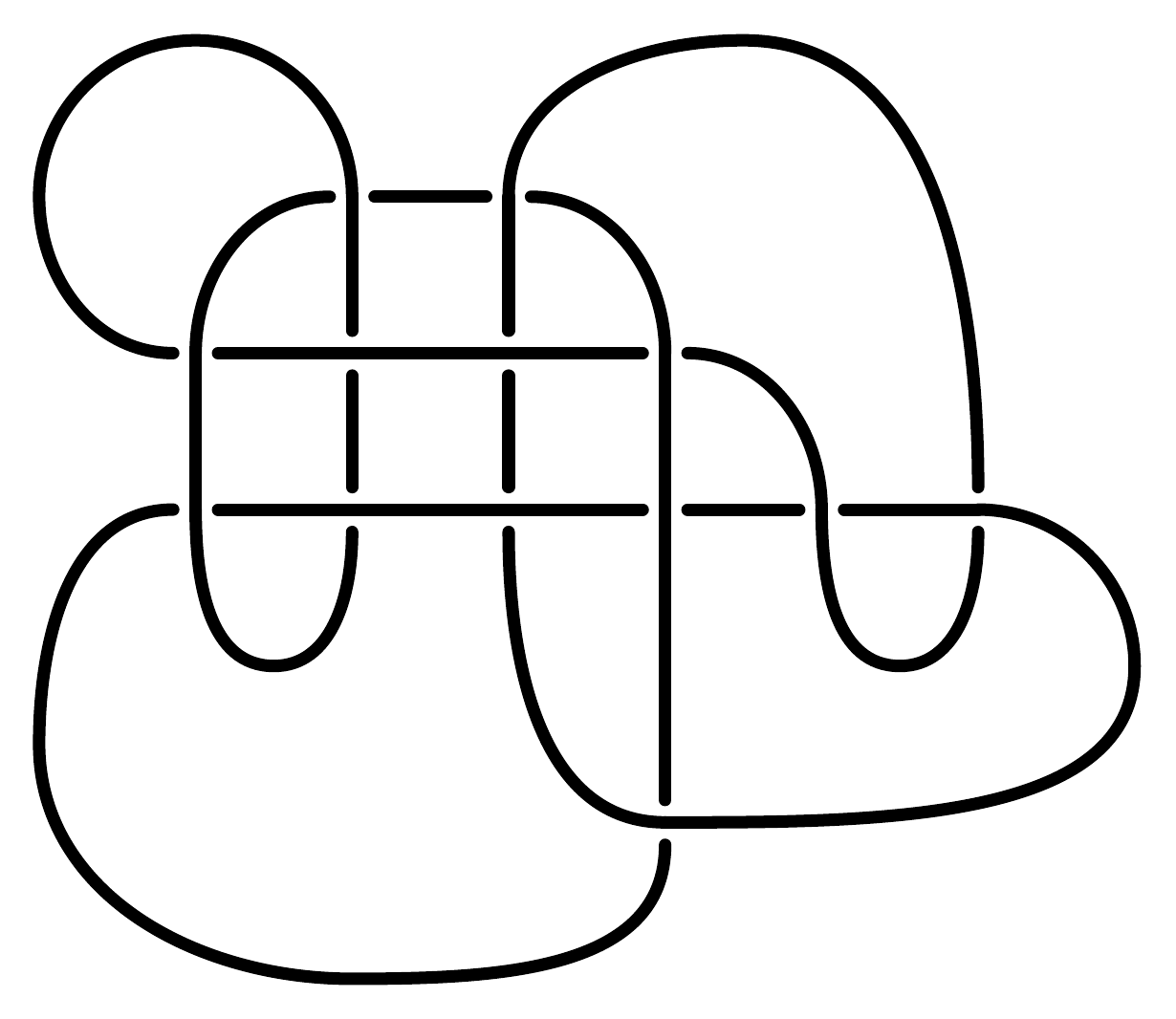}
			\put(-123,102){$1$}
			\put(-146,102){$2$}
			\put(-123,126){$3$}
			\put(-123,79){$4$}
			\put(-146,79){$5$}
			\put(-98,126){$6$}
			\put(-76,102){$7$}
			\put(-75,79){$8$}
			\put(-75,32){$9$}
			\put(-100,79){$10$}
			\put(-53,79){$11$}
			\put(-29,79){$12$}
			\put(-100,102){$13$}}
	& $(-13, 6, -12)$ & $\mapsto (1\,2)$ \\
	& $(-7, 13, 1, -2)$ & $\mapsto (1\,3)$ \\
	& $(-2, 3, -1)$ & $\mapsto (1\,4)$ \\
	& $(-5, 4, 10, -8)$ & $\mapsto (2\,5)$\\
	& & \\
	& & \\
	& & \\
	& & \\
	& & \\
	& & \\
	& & \\
	& & \\
	\end{tabular*}

	\medskip

	\begin{tabular*}{0.7\textwidth}{C{2.5in} R{1in} L{.45in}}
	\multicolumn{3}{c}{$13n_{2641}$} \\
	\midrule
	\multirow{12}{*}{\includegraphics[scale=0.5]{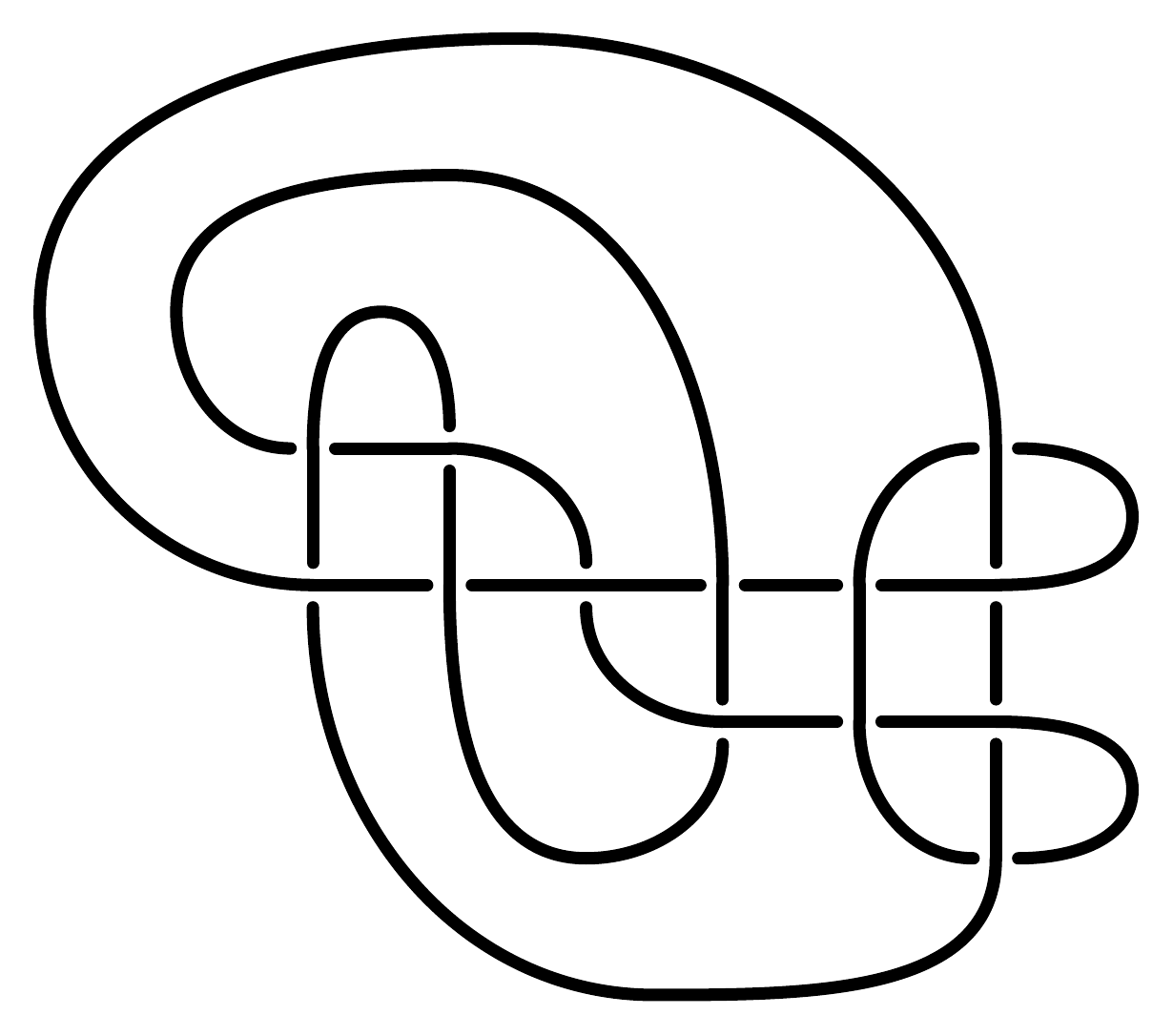}
			\put(-46,70){$1$}
			\put(-25,91){$2$}
			\put(-25,70){$3$}
			\put(-66,70){$4$}
			\put(-87,70){$5$}
			\put(-108,70){$6$}
			\put(-129,70){$7$}
			\put(-25,50){$8$}
			\put(-25,29){$9$}
			\put(-129,91){$10$}
			\put(-109,91){$11$}
			\put(-67,50){$12$}
			\put(-46,50){$13$}}
	& $(-6, 7, 2, -3)$ & $\mapsto (1\,2)$ \\
	& $(-12, 4, -10)$ & $\mapsto (1\,3)$ \\
	& $(-2, 3, -1)$ & $\mapsto (1\,4)$ \\
	& $(-8, 9, -7)$ & $\mapsto (2\,5)$\\
	& & \\
	& & \\
	& & \\
	& & \\
	& & \\
	& & \\
	& & \\
	& & \\
	& & \\
	\end{tabular*}

	\medskip

	\begin{tabular*}{0.7\textwidth}{C{2.5in} R{1in} L{.45in}}
	\multicolumn{3}{c}{$13n_{5018}$} \\
	\midrule
	\multirow{13}{*}{\includegraphics[scale=0.5]{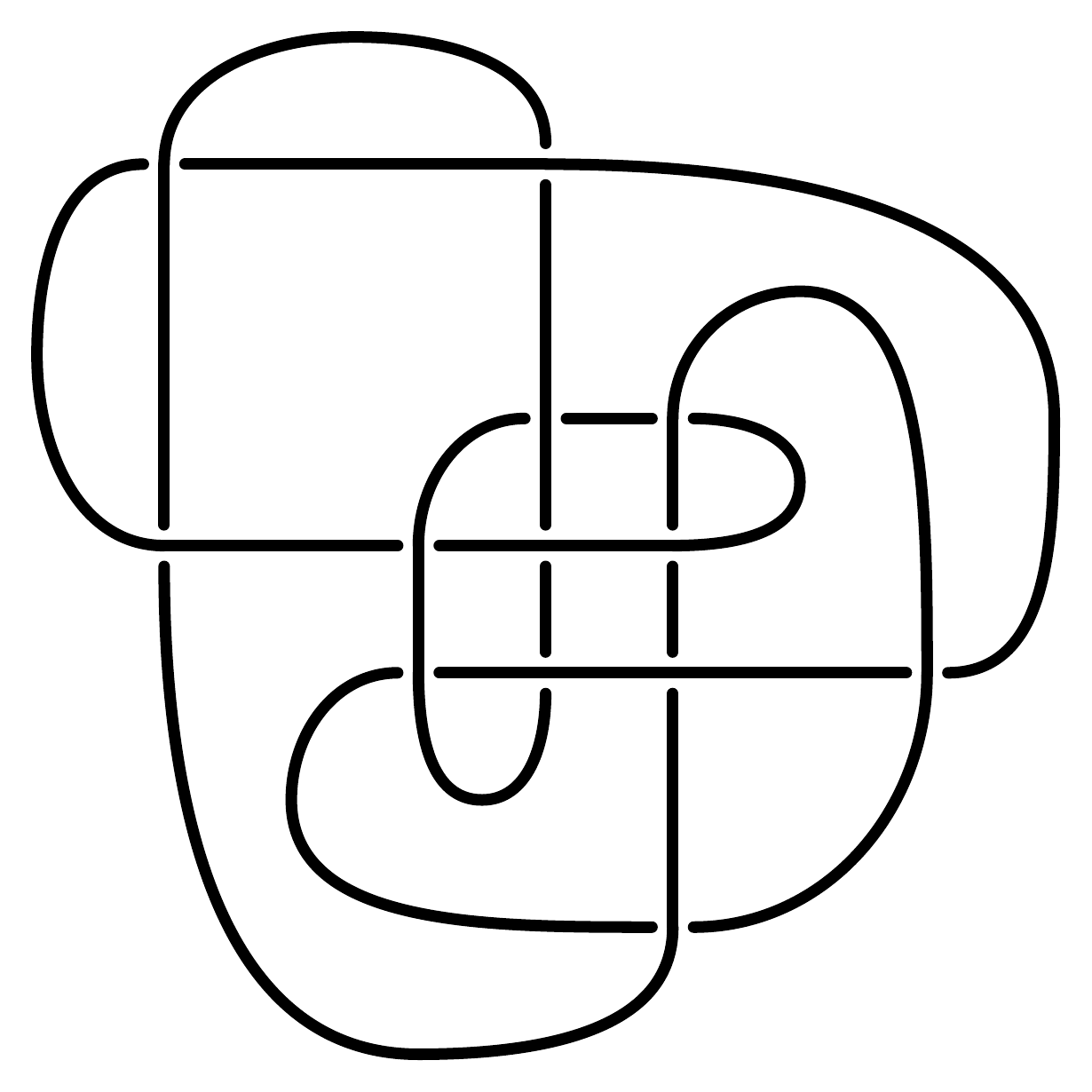}
			\put(-88,111){$1$}
			\put(-87,91){$2$}
			\put(-87,70){$3$}
			\put(-108,70){$4$}
			\put(-108,91){$5$}
			\put(-66,111){$6$}
			\put(-67,91){$7$}
			\put(-149,91){$8$}
			\put(-148,153){$9$}
			\put(-88,153){$10$}
			\put(-26,71){$11$}
			\put(-68,70){$12$}
			\put(-68,29){$13$}}
	& $(-6, 7, 2, -5)$ & $\mapsto (1\,2)$ \\
	& $(-2, -3)$ & $\mapsto (1\,3)$ \\
	& $(-8, 9, -10)$ & $\mapsto (2\,4)$ \\
	& $(-13, 11, 6, -7)$ & $\mapsto (3\,5)$\\
	& & \\
	& & \\
	& & \\
	& & \\
	& & \\
	& & \\
	& & \\
	& & \\
	& & \\
	& & \\
	\end{tabular*}

	\medskip

	\begin{tabular*}{0.7\textwidth}{C{2.5in} R{1in} L{.45in}}
	\multicolumn{3}{c}{$14n_{1753}$} \\
	\midrule
	\multirow{13}{*}{\includegraphics[scale=0.5]{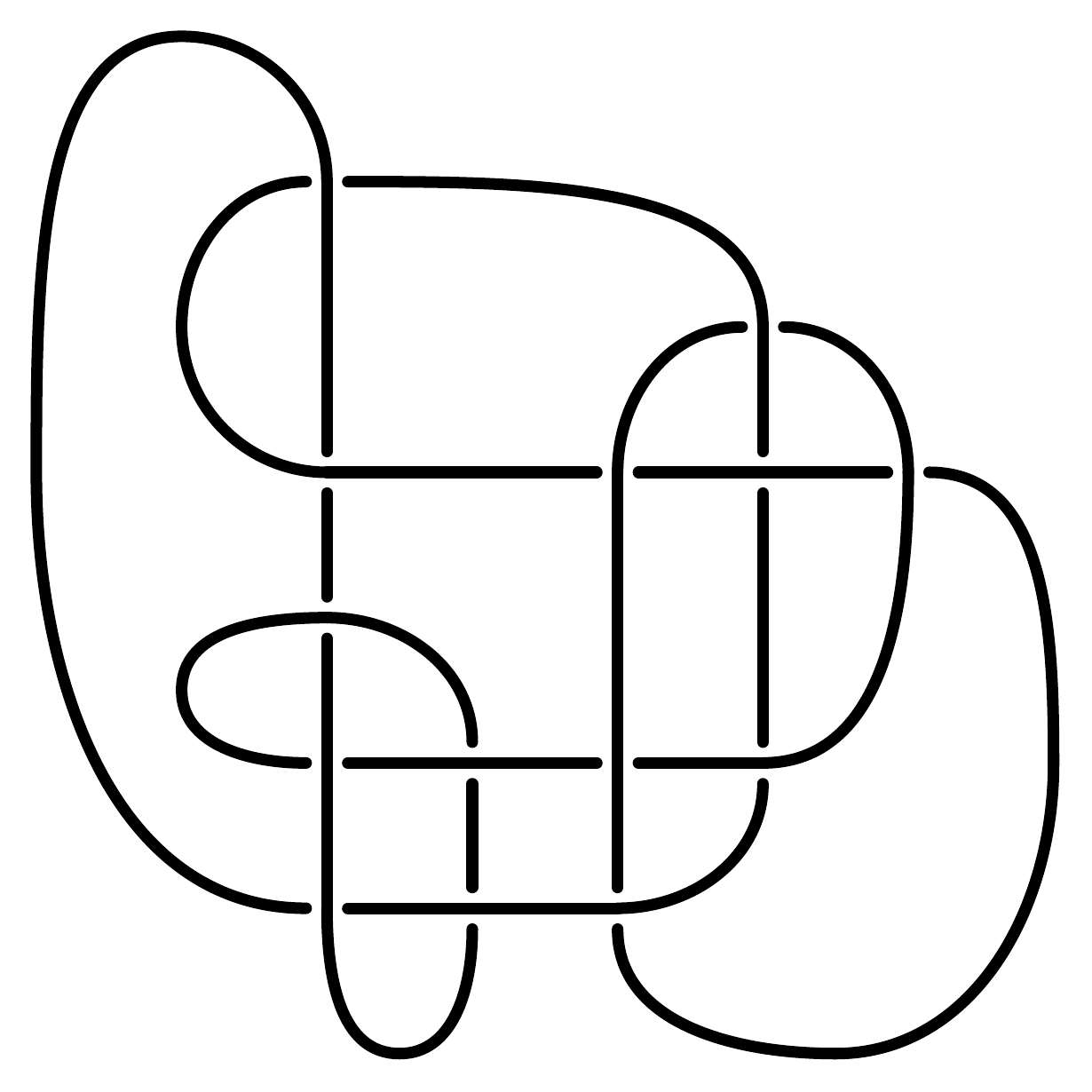}
			\put(-100,56){$1$}
			\put(-123,56){$2$}
			\put(-123,79){$3$}
			\put(-99,32){$4$}
			\put(-123,32){$5$}
			\put(-123,103){$6$}
			\put(-123,151){$7$}
			\put(-75,33){$8$}
			\put(-51.5,56){$9$}
			\put(-52,103){$10$}
			\put(-52,127){$11$}
			\put(-76,103){$12$}
			\put(-30,103){$13$}
			\put(-76,56){$14$}}
	& $(-5, 4, 8, -9)$ & $\mapsto (1\,2)$ \\
	& $(-2,3,-1)$ & $\mapsto (1\,3)$ \\
	& $(-3, -6)$ & $\mapsto (1\,4)$ \\
	& $(-8, 14, 12, -11)$ & $\mapsto (1\,5)$\\
	& & \\
	& & \\
	& & \\
	& & \\
	& & \\
	& & \\
	& & \\
	& & \\
	& & \\
	\end{tabular*}
\end{center}

\clearpage

\bibliography{stickknots-special,stickknots}

\end{document}